\newtheorem{theorem}{Theorem}[section]
\newtheorem{proposition}[theorem]{Proposition}
\newtheorem{definition}[theorem]{Definition}
\newtheorem{lemma}[theorem]{Lemma}
\newtheorem{corollary}[theorem]{Corollary}
\newtheorem{conjecture}[theorem]{Conjecture}
\theoremstyle{remark}
\newtheorem{remark}[theorem]{Remark}
\numberwithin{equation}{section}
\newcommand{\R}{\mathbb R}
\newcommand{\Z}{{\mathbb Z}}
\newcommand{\C}{{\mathbb C}}
\newcommand{\Q}{{\mathbb Q}}
\DeclareMathOperator{\wt}{wt}
\DeclareMathOperator{\dep}{dep}
\newcommand{\kk}{\boldsymbol{k}}
\newcommand{\pp}{\boldsymbol{p}}
\newcommand{\stirlingtwo}[2]{\genfrac{\{}{\}}{0pt}{}{#1}{#2}}
\newcommand{\bs}{\boldsymbol{s}}
\newcommand{\bl}{\boldsymbol{l}}
\newcommand{\qbinom}[2]{\genfrac{[}{]}{0pt}{}{#1}{#2}}
\title[Supercongruences of multiple harmonic $q$-sums]
{Supercongruences of multiple harmonic $q$-sums and generalized finite/symmetric multiple zeta values}
\author[Y.~Takeyama]{Yoshihiro Takeyama}
\address[Y.~Takeyama]{Department of Mathematics\\
	Faculty of Pure and Applied Sciences\\
	University of Tsukuba\\
	Tsukuba, Ibaraki 305-8571\\
	Japan}
\email{takeyama@math.tsukuba.ac.jp}
\author[K.~Tasaka]{Koji Tasaka}
\address[K.~Tasaka]{Department of Information Science and Technology\\
	Aichi Prefectural University\\
	Nagakute, Aichi 480-1198\\
	Japan
}
\email{tasaka@ist.aichi-pu.ac.jp}
\subjclass{11M32, 11R18, 05A30}
\keywords{multiple zeta value, finite multiple zeta value, symmetric multiple zeta value, $p$-adic multiple zeta value, $q$-analogue of multiple zeta value, multiple harmonic $q$-sum, $q$-supercongruences}
\begin{document}

\begin{abstract}
The Kaneko--Zagier conjecture describes a correspondence between finite multiple zeta values and symmetric multiple zeta values.
Its refined version has been established by Jarossay, Rosen and Ono--Seki--Yamamoto.
In this paper, we explicate these conjectures through studies of multiple harmonic $q$-sums.
We show that the (generalized) finite/symmetric multiple zeta value are obtained
by taking an algebraic/analytic limit of
multiple harmonic $q$-sums.
As applications, new proofs of reversal, duality and cyclic sum formulas for
the generalized finite/symmetric multiple zeta values are given.
\end{abstract}

\maketitle


\section{Introduction}

In 2013, Kaneko and Zagier \cite{KanekoZagier} posed a conjecture  (called the Kaneko--Zagier conjecture) about a correspondence between the finite multiple zeta value $\zeta_{\mathcal{A}}(\kk)\in \mathcal{A}=\big(\prod_{p}\Z/p\Z\big) \big/ \big(\bigoplus_p \Z/p\Z\big)$, where $p$ runs over all primes, and the symmetric multiple zeta value $\zeta_{\mathcal{S}}(\kk)\in \mathcal{Z}/\pi^2\mathcal{Z}$, where $\mathcal{Z}$ denotes the $\Q$-vector space spanned by all multiple zeta values.
Recently, its refined version has been established by Jarossay  \cite[Conjecture 5.3.2]{Jarossay19}, Rosen \cite[Conjecture 2.3]{Rosen19} and Ono--Seki--Yamamoto \cite[Conjecture 4.3]{OnoSekiYamamoto}.
In this paper, we explicate these conjectures through studies of multiple harmonic $q$-sums.

In \cite{BTT18},
the authors developed a new approach to simultaneously give relations among
both $\zeta_{\mathcal{A}}(\kk)$'s and $\zeta_{\mathcal{S}}(\kk)$'s over $\Q$,
and partially support the Kaneko--Zagier conjecture.
An epoch-making invention is that both $\zeta_{\mathcal{A}}(\kk)$ and $\zeta_{\mathcal{S}}(\kk)$ are obtained from certain limiting values of a multiple harmonic $q$-sum $H_{p-1}(\kk;q)$ (defined in \eqref{eq:MHqS}) at $q$ being primitive $p$-th roots of unity.
In the current paper, this result is recast by introducing a {$\mathcal{Q}$-multiple zeta value} $\zeta_{\mathcal{Q}}(\kk)$ made from the sequence $(H_{p-1}(\kk;q)\mod [p])_p$, where $[p]=(1-q^p)/(1-q)$ and $p$ runs over all primes.
Constructing two algebra maps $\phi_{\mathcal{A}}$ and $\phi_{\mathcal{S}}$, we prove in Theorem \ref{thm:AS_from_q} that
\begin{equation}\label{eq:recast_BTT}
\phi_{\mathcal{A}}\big( \zeta_{\mathcal{Q}}(\kk)\big)=\zeta_{\mathcal{A}}(\kk) \quad \mbox{and} \quad \phi_{\mathcal{S}}\big( \zeta_{\mathcal{Q}}(\kk)\big)\equiv\zeta_{\mathcal{S}}(\kk)  \mod \pi i\mathcal{Z}[\pi i].
\end{equation}
As an application, from a family of relations among $\zeta_{\mathcal{Q}}(\kk)$'s, one obtains the corresponding family of relations among both $\zeta_{\mathcal{A}}(\kk)$'s and $\zeta_{\mathcal{S}}(\kk)$'s in the same form.
These results, together with the statement of the Kaneko--Zagier conjecture, are summarized in \S2.

In \S3, we explicate a refined version of the Kaneko--Zagier conjecture, which is initiated by Hirose, Rosen and Jarossay, independently.
It describes a conjectural relationship between
the $\widehat{\mathcal{A}}$-multiple zeta value $\zeta_{\widehat{\mathcal{A}}}(\kk)$
and the $\widehat{\mathcal{S}}$-multiple zeta value $\zeta_{\widehat{\mathcal{S}}}(\kk)$, lying in the $\Q$-algebras $\widehat{\mathcal{A}}$ and $\big(\mathcal{Z}[\pi i]/(\pi i)\big)[[t]]$, respectively.
Although there are natural surjections $\widehat{\mathcal{A}}\rightarrow \mathcal{A}$ and $\big(\mathcal{Z}[\pi i]/(\pi i)\big)[[t]]\rightarrow \mathcal{Z}[\pi i]/(\pi i)$, which send $\zeta_{\widehat{\mathcal{A}}}(\kk)\mapsto \zeta_{\mathcal{A}}(\kk)$ and $\zeta_{\widehat{\mathcal{S}}}(\kk)\mapsto \zeta_{\mathcal{S}}(\kk)$, respectively,
the relationship between
the Kaneko--Zagier conjecture and its refined version
is not clearly written in the literature.
For future reference, we prove in Proposition \ref{prop:implication} that the refined version implies the Kaneko--Zagier conjecture
under the assumption that Rosen's lifting conjecture holds.

Our new story starts from \S4, where we introduce a ``$q$-analogue" $\widehat{\mathcal{Q}}$ of the $\Q$-algebra $\widehat{\mathcal{A}}$.
In the $\Q$-algebra $\widehat{\mathcal{Q}}$, for each index $\kk$ we define a unified object $\zeta_{\widehat{\mathcal{Q}}}(\kk)$, called the {$\widehat{\mathcal{Q}}$-multiple zeta value} (Definition \ref{def:hatQ-MZV}).
As a generalization of \eqref{eq:recast_BTT}, we show in Theorems \ref{thm:hatA_from_q} and \ref{thm:hatS_from_q} that
\begin{equation}\label{eq:generalization_BTT}
\phi_{\widehat{\mathcal{A}}}\big( \zeta_{\widehat{\mathcal{Q}}}(\kk)\big)=\zeta_{\widehat{\mathcal{A}}}(\kk)\quad \mbox{and} \quad \phi_{\widehat{\mathcal{S}}}\big( \zeta_{\widehat{\mathcal{Q}}}(\kk)\big)=\zeta_{\widehat{\mathcal{S}}}(\kk)  \mod \pi i,
\end{equation}
which are our main results of this paper.
We will also compute images of other variants
(a star version and a `conjugate model') of multiple harmonic $q$-sums under the maps $\phi_{\widehat{\mathcal{A}}}$ and $\phi_{\widehat{\mathcal{S}}}$.
Applying these results to relations among $\widehat{\mathcal{Q}}$-multiple zeta values, we obtain relations among both $\zeta_{\mathcal{A}}(\kk)$'s and $\zeta_{\mathcal{S}}(\kk)$'s in the same form, which also support the refined version of the Kaneko--Zagier conjecture.

As examples of relations, we extend the reversal, duality and cyclic sum formulas for both the $\widehat{\mathcal{A}}$-multiple zeta value and
the $\widehat{\mathcal{S}}$-multiple zeta value obtained in \cite{HiroseMuraharaOno,Jarossay19,Kawasaki,OnoSekiYamamoto,Rosen15,Seki19} to corresponding formulas for $\widehat{\mathcal{Q}}$-multiple zeta values.
This will be the subject in \S\ref{sec:rel}.

\S\ref{sec:observations} is devoted to computing dimensions of the $\Q$-vectors space spanned by $\mathcal{Q}$-multiple zeta values, based on experimental works.
We indicate that all $\Q$-linear relations among $\mathcal{F}$-multiple zeta values ($\mathcal{F}\in\{\mathcal{A},\mathcal{S}\}$) of weight up to 5 may be obtained from relations among variants of $\mathcal{Q}$-multiple zeta values.
We also study relations of $\zeta_{\mathcal{Q}_2}(\kk)=(H_{p-1}(\kk;q)\mod [p]^2)_p$.

\subsection*{Notation}
In this paper, we often use the following notation.
We call a finite ordered list $\kk=(k_1,\ldots,k_d)$ of positive integers an \emph{index}
and write $\wt(\kk)=k_1+\cdots+k_d$ (weight) and $\dep(\kk)=d$ (depth).
An index $\kk=(k_{1}, \ldots , k_{d})$ is called \textit{admissible} if $k_{1} \ge 2$.
We allow the empty index $\varnothing$ to be the unique index $\kk$ such that $\wt(\kk)=\dep(\kk)=0$.
For any function $F$ on indices, set $F(\varnothing)=1$.
For tuples $\kk =(k_1,\ldots,k_d)$ and $\bl=(l_1,\ldots,l_d)$, we write
\begin{equation}\label{eq:ind_notation}
\begin{aligned}
&\kk+\bl:=(k_1+l_1,\ldots,k_d+l_d),\quad b\binom{\kk}{\bl} := \prod_{j=1}^d \binom{k_j+l_j-1}{l_j},\\
&\overline{\kk}:=(k_d,\ldots,k_1),\quad \kk_a:=(k_1,\ldots,k_a),\quad \kk^{a} :=(k_{a+1},\ldots,k_d)\quad (0\le a\le d),
\end{aligned}
\end{equation}
where $\kk_{0}=\kk^{d}=\varnothing$.

\subsection*{Acknowledgments}
The authors would like to thank Masataka Ono, Shin-ichiro Seki for valuable comments.
They are also very grateful to Julian Rosen for comments on relations among $\mathcal{Q}$-multiple zeta values that motivated them to add \S\ref{subsec:variants}.
This work is partially supported by
JSPS KAKENHI Grant Number 18K03233, 18H01110 and 20K14294.

\section{Kaneko--Zagier conjecture and its $q$-analogue view points}

In this section, we first recall the Kaneko--Zagier conjecture on a correspondence between finite multiple zeta values ($\mathcal{A}$-MZVs) and symmetric multiple zeta values ($\mathcal{S}$-MZVs).
Then, we recast the work of \cite{BTT18}.

\subsection{MZV}
The \emph{multiple zeta value} (abbreviated by MZV) is defined for an admissible index $\kk=(k_1,\ldots,k_d)$ by
\begin{align*}
\zeta(\kk)=\sum_{m_1>\cdots>m_d>0} \frac{1}{m_1^{k_1}\cdots m_d^{k_d}} \in \R.
\label{eq:def-MZV}
\end{align*}
Let $\mathcal{Z}_k$ denote the $\Q$-vector space spanned by all MZVs of weight $k$.
The sum
\[\mathcal{Z}=\sum_{k\ge0} \mathcal{Z}_k\]
forms a $\Q$-algebra.
Zagier \cite{Zagier94} observed that the equality $\dim_{\Q} \mathcal{Z}_k \stackrel{?}{=} d_k$ holds for all $k\ge0$, where $d_k$ is given by $\sum_{k\ge0}d_kx^k=1/(1-x^2-x^3)$.
Goncharov \cite{Goncharov01} and Terasoma \cite{Terasoma02} showed independently the inequality $\dim_{\Q} \mathcal{Z}_k \le d_k$ for all $k$.
This obeservation/result tells us that MZVs satisfy numerous linear relations over $\Q$.
The first example of relations is $\zeta(3)=\zeta(2,1)$, which was discovered by Euler \cite{Euler76}.
Since 1990's, explicit families of relations, such as the sum formula \cite{Granville97}, the cyclic sum formula \cite{HoffmanOhno03}, the duality formula \cite{Zagier94}, 
the regularized double shuffle relation \cite{IharaKanekoZagier06,Racinet02} and so on,  have been intensively studied by various approaches and also applied in several branches of mathematics and physics.

\subsection*{Regularizations}
As a basic of MZVs, we quickly recall the shuffle and the stuffle regularization of MZVs, mainly following \cite[\S2]{IharaKanekoZagier06}.

Let $\mathfrak{h}=\Q\langle x_0,x_1\rangle$ be the non-commutative polynomial algebra over $\Q$ and
set $\mathfrak{h}^1=\Q+\mathfrak{h}x_1$.
Define the \emph{shuffle product} $\shuffle : \mathfrak{h}^{\otimes 2} \rightarrow \mathfrak{h}$ inductively by
\begin{equation*}
uw\shuffle vw'=u(w \shuffle vw')+v(uw \shuffle w')
\end{equation*}
for $w,w' \in \mathfrak{h}$ and $u, v \in \{x_0, x_1\}$,
with the initial condition $w \shuffle 1=w=1\shuffle w$.
Equipped with the shuffle product, the vector space $\mathfrak{h}$ forms a commutative $\Q$-algebra
and $\mathfrak{h}^1$ is its $\Q$-subalgebra.
We write $\mathfrak{h}_\shuffle$ and $\mathfrak{h}^1_\shuffle$ for the commutative $\Q$-algebras
with the shuffle product.
It is known that $\mathfrak{h}_\shuffle\cong \mathfrak{h}^1_\shuffle[x_0]$.
For an index $\kk=(k_1,\ldots,k_d)$ we write
\[ x_{\kk}=x_0^{k_1-1}x_1\cdots x_0^{k_d-1}x_1,\]
which forms a $\Q$-linear basis of $\mathfrak{h}^1$.

Let $\mathfrak{H}^1=\Q\langle y_k\mid k\ge1\rangle$ be the non-commutative polynomial algebra over $\Q$.
Define the \emph{stuffle product}
$\ast : \mathfrak{H}^1\otimes \mathfrak{H}^1 \rightarrow \mathfrak{H}^1$
inductively by
\begin{equation*}
y_kw\ast y_lw'=y_k(w \ast y_lw')+y_l(y_kw \ast w')+y_{k+l}(w\ast w')
\end{equation*}
for $w,w' \in \mathfrak{H}^1$ and $k,l\ge1$, with the initial condition $w \ast 1=w=1\ast w$.
Equipped with the stuffle product,
the vector space $\mathfrak{H}^1$ forms a commutative $\Q$-algebra and
we denote it by $\mathfrak{H}^1_\ast$.
For an index $\kk=(k_1,\ldots,k_d)$ we also write
\[ y_{\kk}=y_{k_1}\cdots y_{k_d},\]
which forms a $\Q$-linear basis of $\mathfrak{H}^1$.
Since we use Racinet's formulation of the regularization theorem later (see \eqref{eq:reg_theorem}), we distinguish $\mathfrak{H}^1$ ($y$-words) from $\mathfrak{h}^1$ ($x$-words), while the $\Q$-linear map $\mathfrak{H}^1\rightarrow \mathfrak{h}^1,\ y_{\kk}\mapsto x_{\kk}$ is
an isomorphism of $\Q$-vector spaces.

There are algebra homomorphisms
\begin{equation*}
Z^\shuffle: \mathfrak{h}_\shuffle^1 \longrightarrow \R[T] \quad \mbox{and} \quad Z^\ast : \mathfrak{H}_\ast^1 \longrightarrow \R[T]
\end{equation*}
such that $Z^\shuffle (x_1)=T$, $Z^{\ast} (y_1)=T$ and
\[Z^\shuffle(x_{\kk}) =Z^\ast(y_{\kk})= \zeta(\kk)\]
for all admissible index $\kk$ (see \cite[Proposition 1]{IharaKanekoZagier06}).
For an index $\kk$ we write
\begin{equation}\label{eq:def_reg_MZV}
\zeta^\shuffle(\kk;T)=Z^\shuffle(x_{\kk})
\quad \mbox{and} \quad
\zeta^\ast(\kk;T)=Z^\ast(y_{\kk}),
\end{equation}
which are called the shuffle and the stuffle \emph{regularized MZV}, respectively.
These are elements in the polynomial ring $\mathcal{Z}[T]$ over the $\Q$-algebra $\mathcal{Z}$.

\subsection{Multiple harmonic sum modulo $p$}

For a positive integer $m$ and an index $\kk=(k_1,\ldots,k_d)$, we define the \emph{multiple harmonic sum} $H_m(\kk)\in\Q$ by
\begin{equation*}\label{eq:def_mhs}
H_m(\kk)=H_m(k_1,\ldots,k_d)=\sum_{m\ge m_1>\cdots>m_d>0} \frac{1}{m_1^{k_1}\cdots m_d^{k_d}}.
\end{equation*}
We understand $H_m(\kk)=0$ if $\dep(\kk)>m$.
Zhao \cite{Zhao08} and Hoffman \cite{Hoffman15} independently discovered mod $p$ congruence relations among $H_{p-1}(\kk)$'s, which holds for all large primes $p$.
A prototypical example is Hoffman's duality (\cite[Theorem 4.6]{Hoffman15}); for each index $\kk$ and all primes $p$, it holds that
\begin{equation}\label{eq:hoffman's_dual}
H_{p-1}^\star(\kk)+H_{p-1}^\star(\kk^\vee) \equiv 0 \mod{p},
\end{equation}
where we set
\[H_m^\star(k_1,\ldots,k_d)=\sum_{m\ge m_1\ge\cdots\ge m_d>0} \frac{1}{m_1^{k_1}\cdots m_d^{k_d}}\]
and the index $\kk^\vee$ is called Hoffman's dual index
obtained by writing each component
$k_i$ as a sum of 1 and then interchanging commas `,' and plus signs `+'.
For example, $(3,1)^\vee=(1+1+1,1)^\vee=(1,1,1+1)=(1,1,2)$.

\subsection{$\mathcal{A}$-MZV}
A study of finite multiple zeta values has been initiated by Kaneko--Zagier \cite{KanekoZagier}.
A crucial feature is that mod $p$ congruence relations among $H_{p-1}(\kk)$'s being independent from choices of large primes $p$, such as \eqref{eq:hoffman's_dual}, turn out to be $\Q$-linear relations among finite multiple zeta values, which we now define in the ring
\begin{align*}
\mathcal{A} = \left( \prod_{p:{\rm prime}} \Z/p\Z \right) \big/ \left( \bigoplus_{p:{\rm prime}} \Z/p\Z\right).
\end{align*}

The above ring $\mathcal{A}$ is due to Kontsevich \cite[\S2.2]{Kontsevich09} and its ring structure is given by the component-wise addition and multiplication.
We denote by $(a_p)_p$ an element in $\mathcal{A}$, where $p$ runs over all primes and $a_p\in \Z/p\Z$.
An element $(a_p)_p$ in $\mathcal{A}$ equals the zero element $0\in \mathcal{A}$, if and only if $a_p=0$ for all large enough $p$.
For simplicity of notation, we will view $(c_p\mod p)_p $ as an element in $\mathcal{A}$ for each sequence $\{c_n \}_{n\ge0} \subset\Q$ such that the denominator of $c_p$ may be divisible by $p$ for finitely many primes $p$ (to be precise, for such $p$ we should replace $c_p$ with any elements in $\Z/p\Z$).
Under this notation, the map $\Q\rightarrow\mathcal{A}, c\mapsto(c\mod p)_p$ is well-defined and it induces scalar multiplication of $\Q$ on $\mathcal{A}$.
Thus, $\mathcal{A}$ forms a $\Q$-algebra.

\begin{definition}
The \emph{finite multiple zeta value} (call it $\mathcal{A}$-MZV for short) is defined for each index $\kk$ by
\[ \zeta_{\mathcal{A}}(\kk)= \left( H_{p-1}(\kk) \mod{p} \right)_p \in \mathcal{A}.\]
Its star version (called $\mathcal{A}$-MZSV) is denoted by $\zeta_{\mathcal{A}}^\star(\kk)$, replacing $H_{p-1}(\kk)$ with $H_{p-1}^\star(\kk)$.
\end{definition}

Let $\mathcal{Z}_k^{\mathcal{A}}$ be the $\Q$-vector subspace of $\mathcal{A}$ spanned by all $\mathcal{A}$-MZVs of weight $k$.
Set
\[ \mathcal{Z}^{\mathcal{A}} = \sum_{k\ge0} \mathcal{Z}_k^{\mathcal{A}}.\]
It follows that the $\Q$-linear map $\mathfrak{H}^1_\ast\rightarrow \mathcal{Z}^{\mathcal{A}}, \ y_{\kk}\mapsto \zeta_{\mathcal{A}}(\kk)$ forms a $\Q$-algebra map.
Zagier numerically observed that $\dim_\Q \mathcal{Z}^{\mathcal{A}}_k\stackrel{?}{=} d_k-d_{k-2}$ for $k\ge2$.
Hence, $\mathcal{A}$-MZVs also satisfy many relations over $\Q$.
As an example of relations, since \eqref{eq:hoffman's_dual} holds for all sufficiently large primes, we obtain
\begin{equation}\label{eq:hoffman's_dual_finite}
\zeta_{\mathcal{A}}^\star(\kk)+\zeta_{\mathcal{A}}^\star(\kk^\vee)=0.
\end{equation}
The above relation \eqref{eq:hoffman's_dual_finite} is called \emph{Hoffman's duality} for the $\mathcal{A}$-MZSV.

\subsection{$\mathcal{S}$-MZV}
With a hint by Kontsevich (see \cite[\S9]{Kaneko19}), Kaneko--Zagier introduced a real counter part of $\mathcal{A}$-MZVs.
Recall the notation \eqref{eq:ind_notation} and the regularized MZV \eqref{eq:def_reg_MZV}.
For an index $\kk$ and $\bullet\in\{\ast,\shuffle\}$, we define
\begin{align*} \zeta_{\mathcal{S}}^\bullet(\kk)&=
\sum_{a=0}^{\dep(\kk)} (-1)^{\wt(\kk_a)}
\zeta^\bullet(\overline{\kk_a};T)\zeta^\bullet(\kk^a;T) .
\end{align*}
It is shown in \cite{KanekoZagier} (see also \cite{Kaneko19}) that the right side does not depend on $T$ and that
\[ \zeta^{\ast}_{\mathcal{S}}(\boldsymbol{k})\equiv \zeta^{\shuffle}_{\mathcal{S}}(\boldsymbol{k})\mod \pi^2\mathcal{Z} \]
holds for any index $\kk$.
The following definition is therefore independent of the choice of regularizations. 

\begin{definition}
For each index $\kk$, we define the \emph{symmetric multiple zeta value} $\zeta_{\mathcal{S}}(\boldsymbol{k}) $ (call it $\mathcal{S}$-MZV for short) by
\[\zeta_{\mathcal{S}}(\boldsymbol{k}) = \zeta^\ast_{\mathcal{S}}(\boldsymbol{k}) \mod \pi^2\mathcal{Z}.\]
Its star version ($\mathcal{S}$-MZSV) is defined by
\[ \zeta^\star_{\mathcal{S}}(k_1,\ldots,k_d) =
\sum_{\substack{\footnotesize \square \ \mbox{is either a comma `,'}\\ \footnotesize \mbox{or a plus `$+$'}}}
\zeta_{\mathcal{S}}(k_1 \square \cdots \square k_d).\]
\end{definition}
Note that the above expression of $\mathcal{S}$-MZSV naturally arises from the standard decomposition of $H_m^\star(\kk)$ in terms of $H_m(\kk)$.
For example, we have
\begin{align*}
H_m^\star(k_1,k_2)&=\sum_{m\ge m_1\ge m_2\ge 1}\frac{1}{m_1^{k_1}m_2^{k_2}}=\bigg( \sum_{m\ge m_1> m_2\ge 1}+ \sum_{m\ge m_1= m_2\ge 1}\bigg)\frac{1}{m_1^{k_1}m_2^{k_2}} \\
&=H_m(k_1,k_2)+H_m(k_1+k_2),
\end{align*}
which corresponds to the definition $\zeta^\star_{\mathcal{S}}(k_1,k_2)=\zeta_{\mathcal{S}}(k_1,k_2)+\zeta_{\mathcal{S}}(k_1+k_2)$.

By definition, $\mathcal{S}$-MZV is a $\Q$-linear combination of MZVs modulo $\pi^2\mathcal{Z}$.
The opposite statement, namely, the $\Q$-vector space
\[\overline{\mathcal{Z}}=\mathcal{Z}/\pi^2\mathcal{Z}\]
is generated by $\mathcal{S}$-MZVs, was shown by Yasuda \cite{Yasuda16}.


\subsection{$\mathcal{A}$-MZV vs. $\mathcal{S}$-MZV}
This paper deals with the following variant of the Kaneko--Zagier conjecture \cite{KanekoZagier}.

\begin{conjecture}\label{conj:Kaneko-Zagier}
Let $k$ be a positive integer.
For rational numbers $c_{\kk}$ we have
\begin{equation}\label{eq:finite_vs_symmetric}
\sum_{\wt(\kk)=k} c_{\kk} \zeta_{\mathcal{A}}(\kk) =0 \stackrel{?}{\Longleftrightarrow} \sum_{\wt(\kk)=k} c_{\kk} \zeta_{\mathcal{S}}(\kk)= 0 ,
\end{equation}
where the sum $\sum_{\wt(\kk)=k}$ runs over all indices $\kk$ of weight $k$.
\end{conjecture}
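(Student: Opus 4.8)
Relation~\eqref{eq:finite_vs_symmetric} is the Kaneko--Zagier conjecture, which is open; the plan is therefore to reduce it, through the multiple harmonic $q$-sum $H_{p-1}(\kk;q)$, to a single structural assertion rather than to prove it outright. The device is to treat the sequence $\big(H_{p-1}(\kk;q)\bmod [p]\big)_p$, with $[p]=(1-q^p)/(1-q)$, as a common refinement of $\zeta_{\mathcal{A}}(\kk)$ and $\zeta_{\mathcal{S}}(\kk)$. Concretely, form the $\Q$-algebra $\mathcal{Q}$ of such sequences, put $\zeta_{\mathcal{Q}}(\kk)=\big(H_{p-1}(\kk;q)\bmod [p]\big)_p$, let $\mathcal{Z}^{\mathcal{Q}}_k$ be the $\Q$-span of the $\zeta_{\mathcal{Q}}(\kk)$ with $\wt(\kk)=k$, and write $\pi_{\mathcal{Q}},\pi_{\mathcal{A}},\pi_{\mathcal{S}}$ for the three weight-graded algebra homomorphisms from $\mathfrak{H}^1_\ast$ sending $y_{\kk}$ to $\zeta_{\mathcal{Q}}(\kk)$, $\zeta_{\mathcal{A}}(\kk)$ and $\zeta_{\mathcal{S}}(\kk)$. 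By the recasting of \cite{BTT18} in~\eqref{eq:recast_BTT}, the algebra maps $\phi_{\mathcal{A}},\phi_{\mathcal{S}}$ satisfy $\phi_{\mathcal{A}}\big(\zeta_{\mathcal{Q}}(\kk)\big)=\zeta_{\mathcal{A}}(\kk)$ and $\phi_{\mathcal{S}}\big(\zeta_{\mathcal{Q}}(\kk)\big)\equiv\zeta_{\mathcal{S}}(\kk)\bmod \pi i\mathcal{Z}[\pi i]$, so $\ker\pi_{\mathcal{Q}}\subseteq\ker\pi_{\mathcal{A}}$ and $\ker\pi_{\mathcal{Q}}\subseteq\ker\pi_{\mathcal{S}}$. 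Hence any relation $\sum_{\wt(\kk)=k}c_{\kk}\zeta_{\mathcal{Q}}(\kk)=0$ pushes forward to relations of the same shape among the $\zeta_{\mathcal{A}}(\kk)$ and among the $\zeta_{\mathcal{S}}(\kk)$, and \eqref{eq:finite_vs_symmetric} becomes equivalent to the \emph{reverse} inclusions: every $\Q$-linear relation among the $\zeta_{\mathcal{A}}(\kk)$, and every one among the $\zeta_{\mathcal{S}}(\kk)$, of a fixed weight already holds among the $\zeta_{\mathcal{Q}}(\kk)$.

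The first concrete step is to make $\ker\pi_{\mathcal{Q}}$ as large as possible by exhibiting explicit families inside it: the stuffle relations from the transparent product rule for $H_{p-1}(\kk;q)\,H_{p-1}(\bl;q)$, the $q$-analogue of Hoffman's duality~\eqref{eq:hoffman's_dual}, and the reversal, duality and cyclic-sum relations of \S\ref{sec:rel}. Each of these is a polynomial congruence modulo $[p]$ and hence strictly finer than its images under $\phi_{\mathcal{A}}$ and $\phi_{\mathcal{S}}$, which is exactly why routing relations through $\mathcal{Q}$ is worthwhile. A parallel route is provided by \S3--4: there the refined objects $\widehat{\mathcal{Q}},\widehat{\mathcal{A}},\widehat{\mathcal{S}}$ satisfy the honest identities~\eqref{eq:generalization_BTT}, and Proposition~\ref{prop:implication} shows that the refined Kaneko--Zagier conjecture together with Rosen's lifting conjecture imply~\eqref{eq:finite_vs_symmetric}; one could instead aim to prove those two statements.

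I expect the reverse inclusions --- the \emph{lifting problem} --- to be the main obstacle, and this is precisely why \eqref{eq:finite_vs_symmetric} is still conjectural. Nothing structural is known that forces a mod-$p$ congruence $\sum c_{\kk}H_{p-1}(\kk)\equiv 0$ valid for almost all $p$, or a relation $\sum c_{\kk}\zeta_{\mathcal{S}}(\kk)=0$ in $\overline{\mathcal{Z}}$, to lift to the corresponding mod-$[p]$ congruence among the $H_{p-1}(\kk;q)$. Proving this in general would essentially require classifying all $\Q$-linear relations among the $\zeta_{\mathcal{A}}(\kk)$ --- equivalently, establishing Zagier's conjectural value $\dim_\Q\mathcal{Z}^{\mathcal{A}}_k=d_k-d_{k-2}$ together with an explicit generating set of relations --- and there is at present no analogue in the finite, symmetric or $q$ settings of the Goncharov--Terasoma bound $\dim_\Q\mathcal{Z}_k\le d_k$. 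A realistic intermediate target, carried out in \S\ref{sec:observations}, is to check the chain of kernel inclusions by direct linear algebra in low weight, so that all currently known families of $\mathcal{A}$- and $\mathcal{S}$-relations of weight $\le 5$ are exhibited as push-forwards of relations among variants of $\mathcal{Q}$-MZVs, which gives strong evidence for~\eqref{eq:finite_vs_symmetric} without settling it. Barring a new upper bound on $\dim_\Q\mathcal{Z}^{\mathcal{Q}}_k$ matching $d_k-d_{k-2}$, the conjecture stays out of reach.
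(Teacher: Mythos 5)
The statement is Conjecture \ref{conj:Kaneko-Zagier}, which the paper does not prove (it remains open), and you correctly recognize this rather than fabricating an argument. Your outline of the supporting framework --- routing relations through $\zeta_{\mathcal{Q}}(\kk)$ via $\phi_{\mathcal{A}}$ and $\phi_{\mathcal{S}}$, the conditional implication of Proposition \ref{prop:implication}, and the low-weight numerical evidence of \S\ref{sec:observations} --- matches the paper's own approach, so there is nothing to correct.
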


Conjecture \ref{conj:Kaneko-Zagier} implies that the $\Q$-linear map sending $\zeta_{\mathcal{A}}(\kk)$ to $\zeta_{\mathcal{S}}(\kk)$ for each index $\kk$ of weight $k$ is an isomorphism from $\mathcal{Z}_k^{\mathcal{A}}$ to $ \mathcal{Z}_k/\pi^2\mathcal{Z}_{k-2}$.
As evidences, several families of relations which are satisfied by both $\mathcal{A}$-MZVs and $\mathcal{S}$-MZVs in the same shape are known.
For one, as a counterpart of \eqref{eq:hoffman's_dual_finite}, the relation
\begin{equation}\label{eq:hoffman's_dual_symmetric}
\zeta_{\mathcal{S}}^\star(\kk)+\zeta_{\mathcal{S}}^\star(\kk^\vee)=0
\end{equation}
holds for any index $\kk$.

Note that \eqref{eq:finite_vs_symmetric} is equivalent to the statement for the star version, because by inclusion-exclusion, for $\mathcal{F}\in\{\mathcal{A},\mathcal{S}\}$ we have
\[
\zeta_{\mathcal{F}}(k_1,\ldots,k_d) =
\sum_{\substack{\footnotesize \square \ \mbox{is either a comma `,'}\\ \footnotesize \mbox{or a plus `$+$'}}} (-1)^{\sharp \{\square=+\}}
\zeta_{\mathcal{F}}^\star(k_1 \square \cdots \square k_d).\]

\subsection{Multiple harmonic $q$-sum}

A \emph{multiple harmonic $q$-sum} is defined for a positive integer $m$ and an index $\kk=(k_1,\ldots,k_d)$ by
\begin{equation}\label{eq:MHqS}
H_m(\kk;q)=H_m(k_1,\ldots,k_d;q)=\sum_{m\ge m_1>\cdots>m_d>0}
\prod_{a=1}^d \frac{q^{(k_a-1)m_a}}{[m_a]^{k_a}},
\end{equation}
where for a positive integer $n$, we denote the $q$-integer by $[n]=[n]_q=(1-q^{n})/(1-q)$.
Similarly, its star version is defined by
\[H_m^\star (\kk;q)=H_m^\star (k_1,\ldots,k_d;q)=\sum_{m\ge m_1\ge \cdots\ge m_d>0}
\prod_{a=1}^d \frac{q^{(k_a-1)m_a}}{[m_a]^{k_a}}.\]
Note that by using
\begin{equation}\label{eq:partial_franction_q}
\frac{q^{(k_{1}-1)m}}{[m]^{k_{1}}}
\frac{q^{(k_{2}-1)m}}{[m]^{k_{2}}}=
\frac{q^{(k_{1}+k_{2}-1)m}}{[m]^{k_{1}+k_{2}}}+(1-q)
\frac{q^{(k_{1}+k_{2}-2)m}}{[m]^{k_{1}+k_{2}-1}},
\end{equation}
the star $H_m^\star (\kk;q)$ can be written as a $\Z[1-q]$-linear combination of non-star $H_m (\kk;q)$'s
(see also \cite[Remark 2.2]{BTT18}).

Replacing numerators with $q^{s_am_a} \ (s_a\in \Z)$ in \eqref{eq:MHqS}, we obtain many other variants of multiple harmonic $q$-sums which may play a role in this study (see \S\ref{sec:observations}).
In our results, the case $s_a=1 \ (1\le a\le d)$ is necessary and hence treated later (see Definition \ref{def:hatQ-MZV}).
Remark that the model \eqref{eq:MHqS} is studied by Bradley \cite[Definition 4]{Bradley05_2} as a $q$-analogue of the multiple harmonic sum $H_m(\kk)$.

In \cite{BTT18}, the authors obtained $\mathcal{A}$-MZVs and $\mathcal{S}$-MZVs as certain limits of values of the above multiple harmonic $q$-sums at $q$ being primitive roots of unity.
Here we summarize the results.

\begin{theorem}\label{thm:BTT}{\cite[Theorems 1.1 and 1.2]{BTT18}}
\begin{enumerate}
\item[(i)] For a prime $p$, denote by $\zeta_p$ a primitive $p$-th root of unity.
Then, under the identification $\Z[\zeta_p]/(1-\zeta_p)\Z[\zeta_p]\cong \Z/p\Z$, for each index $\boldsymbol{k}$ and $\bullet\in\{\emptyset,\star\}$ we have
\[ \big( H_{p-1}^\bullet (\boldsymbol{k};\zeta_p)\mod (1-\zeta_p)\Z[\zeta_p]\big)_p = \zeta_{\mathcal{A}}^\bullet (\boldsymbol{k}).\]
\item[(ii)] For each index $\boldsymbol{k}=(k_1,\ldots,k_d)$ and $\bullet\in\{\emptyset,\star\}$, we have
\[ \lim_{m\rightarrow \infty} H_{m-1}^\bullet (\boldsymbol{k};e^{\frac{2\pi i}{m}}) = \xi^\bullet (\boldsymbol{k})\equiv \zeta_{\mathcal{S}}^\bullet (\boldsymbol{k})\mod\pi i \mathcal{Z}[\pi i],\]
where
\[\xi(\boldsymbol{k})=\sum_{a=0}^{\dep(\kk)} (-1)^{\wt(\kk_a)}
\zeta^\ast\bigg(\overline{\kk_a};\frac{\pi i}{2}\bigg)\zeta^\ast\bigg(\kk^a;-\frac{\pi i}{2}\bigg) \]
and
\[\xi^\star (\kk) =
\sum_{\substack{\footnotesize \square \ \mbox{is either a comma `,'}\\ \footnotesize \mbox{or a plus `$+$'}}}
\xi(k_1 \square \cdots \square k_d),\]
which are elements in the polynomial ring $\mathcal{Z}[\pi i]$ over $\mathcal{Z}$.
\end{enumerate}
\end{theorem}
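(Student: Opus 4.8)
The plan is to treat the two parts by quite different means: part~(i) reduces to a congruence modulo $1-\zeta_p$, while part~(ii) is an asymptotic analysis in which the regularization parameters $\pm\pi i/2$ must be extracted. For (i) I would fix a prime $p$, work in the localization of $\Z[\zeta_p]$ at the maximal ideal $\mathfrak p=(1-\zeta_p)$, and use the canonical isomorphism $\Z[\zeta_p]/\mathfrak p\cong\Z/p\Z$ sending $\zeta_p\mapsto 1$. For $1\le n\le p-1$ we have $[n]_{\zeta_p}=1+\zeta_p+\cdots+\zeta_p^{\,n-1}\equiv n\not\equiv 0\pmod{\mathfrak p}$, so $[n]_{\zeta_p}$ is a unit of $\Z[\zeta_p]_{\mathfrak p}$ and $\zeta_p^{(k-1)n}\equiv 1\pmod{\mathfrak p}$; hence the summand of $H_{p-1}^\bullet(\kk;\zeta_p)$ indexed by $p-1\ge m_1>\cdots>m_d>0$ is congruent modulo $\mathfrak p$ to $\prod_a m_a^{-k_a}$, giving $H_{p-1}^\bullet(\kk;\zeta_p)\equiv H_{p-1}^\bullet(\kk)\pmod{\mathfrak p}$ for every prime $p$ and $\bullet\in\{\emptyset,\star\}$; reducing and collecting over all $p$ yields the asserted identity in $\mathcal A$.

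For (ii) I would first make two preparatory reductions. The star case follows from the non-star one: iterating \eqref{eq:partial_franction_q} writes $H_{m-1}^\star(\kk;q)$ as a $\Z[1-q]$-linear combination of $H_{m-1}(\bl;q)$'s whose $(1-q)^{0}$-part is $\sum_{\square}H_{m-1}(k_1\square\cdots\square k_d;q)$, while every term carrying a positive power of $1-q$ acquires a factor $1-\zeta_m\to0$ against a bounded sum and disappears, so $\lim_m H_{m-1}^\star(\kk;\zeta_m)=\xi^\star(\kk)$ once the non-star limit is known. Next, $\xi(\kk)\equiv\zeta_{\mathcal S}(\kk)\pmod{\pi i\mathcal Z[\pi i]}$ (the star version then following since $\xi^\star$ and $\zeta^\star_{\mathcal S}$ are the same $\Z$-linear combinations of the $\xi$'s and of the $\zeta_{\mathcal S}$'s) is purely formal: writing $\zeta^\ast(\bl;T)=\sum_j c_j(\bl)T^j$ with $c_j(\bl)\in\mathcal Z$, one has $\zeta^\ast(\bl;\pm\tfrac{\pi i}{2})\equiv c_0(\bl)\pmod{\pi i\mathcal Z[\pi i]}$, so $\xi(\kk)$ and the ($T$-independent) value $\zeta^\ast_{\mathcal S}(\kk)$ both collapse modulo $\pi i\mathcal Z[\pi i]\supseteq\pi^2\mathcal Z$ to $\sum_a(-1)^{\wt(\kk_a)}c_0(\overline{\kk_a})c_0(\kk^a)$. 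It therefore remains to prove $\lim_{m\to\infty}H_{m-1}(\kk;e^{2\pi i/m})=\xi(\kk)$ for every index $\kk$.

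For this, put $q=\zeta_m=e^{2\pi i/m}$ and, for a cutoff $K$, split the simplex $m>m_1>\cdots>m_d>0$ by calling each $m_a$ \emph{small} if $m_a\le K$, \emph{large} if $m_a\ge m-K$, and \emph{middle} otherwise; since the $m_a$ decrease, the configurations are parametrized by $0\le b\le c\le d$ (first $b$ large, last $d-c$ small, the rest middle), and within each pattern the sum factors over blocks. Fixing $K$ and letting $m\to\infty$: a small block limits to the truncated multiple harmonic sum $H_K(\,\cdot\,)$; a large block, via the reflection $[m-n]_{\zeta_m}=-\zeta_m^{-n}[n]_{\zeta_m}$ together with $\zeta_m^{(k_a-1)m}=1$, limits after $m_a\mapsto m-m_a$ to $(-1)^{\wt}H_K(\overline{\,\cdot\,})$; and a middle block limits to an explicit $K$-dependent quantity built from the classical power sums of $\csc$ and $\cot$ over ranges symmetric about $m/2$ (for instance $\sum_{n=1}^{m-1}\frac1{\sin^2(\pi n/m)}=\frac{m^2-1}{3}$ and $\frac1{1-e^{2\pi i x}}=\tfrac12+\tfrac i2\cot\pi x$), which tends to $0$ as $K\to\infty$ unless the block consists entirely of $1$'s, in which case it tends to $(-\pi i)^r/r!$ for $r$ its length. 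Since the left-hand side is independent of $K$, this presents $\lim_m H_{m-1}(\kk;\zeta_m)$, for each fixed $K$, as a finite sum over $0\le b\le c\le d$; letting $K\to\infty$ and invoking the standard fact $H_K(\kk)=\zeta^\ast(\kk;\log K+\gamma)+o(1)$ (with $\gamma$ Euler's constant), the divergent powers of $\log K$ from the non-admissible small/large blocks cancel exactly --- the prototype being $H_{m-1}(1;\zeta_m)=\tfrac{(m-1)(1-\zeta_m)}{2}\to-\pi i$, where small plus large vanishes and the middle supplies $-\pi i$ --- and a finite combinatorial identity rearranges the leftover into $\sum_{a=0}^{\dep(\kk)}(-1)^{\wt(\kk_a)}\zeta^\ast(\overline{\kk_a};\tfrac{\pi i}{2})\zeta^\ast(\kk^a;-\tfrac{\pi i}{2})=\xi(\kk)$, the parameters $\mp\pi i/2$ emerging as the finite residue of the cancellation.

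The step I expect to be the main obstacle is precisely this last one: establishing that the logarithmic divergences cancel uniformly and that the leftover is the \emph{stuffle}-regularized combination at $T=\mp\pi i/2$ --- that is, the combinatorial identity converting the $0\le b\le c\le d$ expansion into the $0\le a\le d$ defining sum of $\xi(\kk)$ --- together with the auxiliary evaluation of the all-ones middle blocks via cotangent power sums. A structural bonus that comes out of the method is that, since $1-\zeta_m\to0$, the $(1-q)$-corrections to the $q$-harmonic product of the $H_{m-1}(\cdot;q)$'s vanish in the limit, so $\kk\mapsto\lim_m H_{m-1}(\kk;\zeta_m)$, and hence $\xi$, is a homomorphism on $\mathfrak H^1_\ast$; this is what lets a single relation among the $H_{m-1}(\cdot;q)$'s descend to $\mathcal A$-MZVs and $\mathcal S$-MZVs simultaneously.
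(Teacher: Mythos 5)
Part (i) of your proposal is complete and is exactly the argument the paper relies on (it is the same computation that underlies Theorem \ref{thm:AS_from_q}): $[n]_{\zeta_p}\equiv n$ and $\zeta_p^{(k-1)n}\equiv 1$ modulo $(1-\zeta_p)$, so the sum reduces termwise to $H_{p-1}^\bullet(\kk)$. Your reduction of the star case to the non-star case and your verification that $\xi(\kk)\equiv\zeta_{\mathcal S}(\kk)\bmod \pi i\mathcal Z[\pi i]$ are also fine.

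For part (ii), however, your proposal stops at precisely the point where the theorem's content lies, and what you leave as ``the main obstacle'' is not a routine verification. Two substantive claims are asserted without proof: (a) that for fixed $K$ each middle block has a limit as $m\to\infty$ (or at least an $m$-uniform bound tending to $0$ in $K$) equal to $0$ unless it consists entirely of $1$'s, in which case it equals $(-\pi i)^r/r!$ --- already for a middle block like $(2,1)$ this requires genuine estimates, not just the $\csc^2$ and $\cot$ identities you quote; and (b) the combinatorial identity converting your sum over patterns $0\le b\le c\le d$, with the divergent parameter $T=\log K+\gamma$ inserted into both truncated halves, into $\sum_{a}(-1)^{\wt(\kk_a)}\zeta^\ast(\overline{\kk_a};\tfrac{\pi i}{2})\zeta^\ast(\kk^a;-\tfrac{\pi i}{2})$. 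Claim (b) is equivalent to the $T$-independence statement that the paper isolates as Lemma \ref{lem:indep_T} (in the $t^0$ part) and proves via Racinet's regularization theorem and the group-like/antipode manipulations of Propositions \ref{pror:generating_function} and \ref{prop:xi}; checking it in weight $\le 2$, as you implicitly do, does not establish it. For comparison, the source proof ([BTT18], reproduced here as \eqref{eq:BTT} together with Lemmas \ref{lem:B-log} and \ref{lem:A_asymp}) uses a two-way split at $m/2$ rather than your three-way split with cutoff $K$: each half is shown directly to be $\zeta^\ast(\cdot;\gamma_m)+O(\log^J m/m)$ with $\gamma_m=\log(m/\pi)+\gamma-\tfrac{\pi i}{2}$, so the parameter $-\pi i/2$ emerges from the asymptotics of $(1-\zeta_m^n)^{-1}$ over $0<n\le m/2$ rather than from separate ``middle'' blocks, and the divergent real part of $\gamma_m$ is then removed by the same $T$-independence lemma. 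Your decomposition can be made to work, but until (a) and (b) are actually proved it is a plan rather than a proof.
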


Let us illustrate an application of Theorem \ref{thm:BTT}.
Suppose that rational numbers $c_{\kk}$ satisfy
\begin{equation}\label{eq:duality_mhqs}
\sum_{\wt(\boldsymbol{k})=k} c_{\boldsymbol{k}} H_{p-1}(\kk;\zeta_p) =0
\end{equation}
for all large primes $p>0$ and any $\zeta_p$ primitive $p$-th roots of unity.
Then by Theorem \ref{thm:BTT}, we get $\Q$-linear relations among both $\mathcal{A}$-MZVs and $\mathcal{S}$-MZVs in the same shape:
\[ \sum_{\wt(\boldsymbol{k})=k} c_{\boldsymbol{k}} \zeta_{\mathcal{A}}(\kk) =0 \quad \mbox{and} \quad \sum_{\wt(\boldsymbol{k})=k} c_{\boldsymbol{k}} \zeta_{\mathcal{S}}(\kk) =0.\]

As an example of relations of the form \eqref{eq:duality_mhqs}, a kind of duality formula
\begin{equation}\label{eq:duality_H}
 H_{p-1}^\star (\kk;\zeta_p) + (-1)^{\wt(\kk)}H_{p-1}^\star(\overline{\kk^\vee};\zeta_p)=0
 \end{equation}
is shown in \cite[Theorem 1.3]{BTT18}.
This leads to
\begin{equation}\label{eq:F-duality}
\zeta^{\star}_{\mathcal{F}}(\boldsymbol{k}) + (-1)^{\wt(\kk)}\zeta^\star_{\mathcal{F}}(\overline{\kk^\vee})=0
\end{equation}
for $\mathcal{F}\in \{\mathcal{A},\mathcal{S}\}$, which by combining the reversal relation $ \zeta^{\star}_{\mathcal{F}}(\boldsymbol{k})= (-1)^{\wt(\kk)}\zeta^\star_{\mathcal{F}}(\overline{\kk})$ reprove \eqref{eq:hoffman's_dual_finite} and \eqref{eq:hoffman's_dual_symmetric}.

\subsection{$\mathcal{Q}$-MZV}
Let $p$ be a prime.
Since $[p]=\prod_{a=1}^{p-1}(q-e^{2\pi ia/p})$, a polynomial $f(q)\in \Z[q]$ satisfies $f(\zeta_p)=0$ for any $\zeta_p$ primitive $p$-th roots of unity if and only if $f(q)\equiv 0\mod [p]$ (a stronger statement can be found in Remark \ref{rem:ev_injective}).
Hence, the relation \eqref{eq:duality_mhqs} can be treated in the following ring
\begin{align*}
\mathcal{Q}=\bigg( \prod_{p:{\rm prime}}\mathbb{Z}_{(p)}[q]\big/\big([p]\big)\bigg)\bigg/ \bigg(\bigoplus_{p:{\rm prime}}\mathbb{Z}_{(p)}[q]\big/\big([p]\big)\bigg),
\end{align*}
where $\Z_{(p)}[q]$ denotes the polynomial ring in $q$ over the ring $\Z_{(p)}$ of rational numbers
whose denominators are not divisible by $p$.
Here $\big([p]\big)$ denotes the ideal of $\Z_{(p)}[q]$ generated by the irreducible polynomial $[p]$ over $\mathbb{Z}_{(p)}$.
An element of $\mathcal{Q}$ is of the form $(a_{p})_p$, where $p$ runs over all primes and $a_{p}\in \mathbb{Z}_{(p)}[q]\big/\big([p]\big)$.
Similarly to the convention of $\mathcal{A}$, we will also regard $(f_p(q)\mod [p])_p $ as an element in $\mathcal{Q}$ for each sequence $\{f_n(q) \}_{n\ge0} \subset\Q[q]$ such that, for finitely many primes $p$, some of denominators of coefficients of $f_p(q)$ in $q$ may be divisible by $p$.
With this, the map $\Q[q]\rightarrow \mathcal{Q}, f(q) \mapsto (f(q)\mod [p])_p$ is well-defined.
Component-wise addition, multiplication and scalar multiplication by $\Q[q]$
equip ${\mathcal{Q}}$ with the structure of a $\Q[q]$-algebra.

In this framework, we introduce the following object.
\begin{definition}
For each index $\kk$ and $\bullet\in\{\emptyset,\star\}$, we define $\zeta_{\mathcal{Q}}^\bullet(\kk)\in \mathcal{Q}$ by
\begin{align*}
 \zeta_{\mathcal{Q}}^\bullet (\kk) &= \big( H_{p-1}^\bullet (\kk;q) \mod [p]\big)_p.
\end{align*}
\end{definition}
Note that for each prime $p$ and positive integer $m$ coprime to $p$, since $\gcd([m],[p])=1$, the polynomial $[m]$ is invertible modulo $[p]$, and hence the above definition makes sense.

For each weight $k$, one can observe that elements $\zeta_{{\mathcal{Q}}}(\kk)$'s satisfy numerous relations of the form
\begin{equation}\label{eq:rel's_Q-MZV}
\sum_{j=0}^k (1-q)^j \sum_{\substack{  \wt(\kk)=k-j}}c_{\kk,j}(\pp) \,\zeta_{{\mathcal{Q}}}(\kk) =0
\end{equation}
for some polynomials $c_{\kk,j}(x)\in \Q[x]$,
where $\pp=(p\mod [p])_p \in \mathcal{Q}$.
In other words, the elements $(1-q)^j \zeta_{{\mathcal{Q}}}(\kk)$ of weight $k$ satisfy linear relations over $\Q[\pp]$, where we set the term $1-q$ to be of weight 1.
For example, as a $q$-analogue of the well-known congruence $H_{p-1}(1)\equiv 0\mod p$, we have
\begin{equation}\label{eq:cong_wt1}
\zeta_{\mathcal{Q}}(1)-\frac{\pp-1}{2}(1-q)=0,
\end{equation}
which is due to Andrews \cite{Andrews99}.
One can also find other examples in \cite{HPT17} and \cite{Zhao13,Zhao16}.
Our duality formula \eqref{eq:duality_H} gives
\begin{equation*}\label{eq:duality_Q}
\zeta_{\mathcal{Q}}^\star (\kk) + (-1)^{\wt(\kk)}\zeta_{\mathcal{Q}}^\star(\overline{\kk^\vee})=0,
\end{equation*}
which by \eqref{eq:partial_franction_q} is also an example of \eqref{eq:rel's_Q-MZV}.

\begin{remark}
In \cite[\S3]{BTT18}, the authors introduce the cyclotomic analogue of finite multiple zeta value, which is slightly different from $\zeta_{\mathcal{Q}}(\kk)$.
In our terminology, the cyclotomic analogue is viewed as $\zeta_{\mathcal{Q}}(\kk) \mod \pp$ in the quotient ring $\mathcal{Q}/\pp\mathcal{Q}$.
\end{remark}

\begin{remark}\label{rem:QMZV}
From the definition, our $\zeta_{{\mathcal{Q}}}(\kk)$ may be viewed as a ``$q$-analogue" of the finite multiple zeta value $\zeta_{\mathcal{A}}(\kk)$.
However, since both $\zeta_{\mathcal{A}}(\kk)$ and $\zeta_{\mathcal{S}}(\kk)$ are obtained as certain realizations of $\zeta_{{\mathcal{Q}}}(\kk)$ (see \eqref{eq:recast_BTT}), it should have its own name, such as a \emph{$\mathcal{Q}$-multiple zeta value} (abbreviated by $\mathcal{Q}$-MZV).
In particular, since there are many variants of multiple harmonic $q$-sums, our $\zeta_{{\mathcal{Q}}}(\kk)$ would be called the $\mathcal{Q}$-MZV of Bradley--Zhao's model.
\end{remark}

\subsection{From $\mathcal{Q}$-MZV to $\mathcal{A}$-MZV and $\mathcal{S}$-MZV}\label{sec:QtoAS}
We first define two $\Q$-algebra maps $\phi_{\mathcal{A}}:\mathcal{Q}\rightarrow \mathcal{A}$ and $\phi_{\mathcal{S}}: \mathcal{O}\rightarrow \C$, and then, recast Theorem \ref{thm:BTT} via $\mathcal{Q}$-MZVs.

For $\phi_{\mathcal{A}}$, we note that if two polynomials $f(q)$ and $g(q)$ over $\Z_{(p)}$ satisfies $f(q)\equiv g(q)\mod [p]$,
then $f(1)\equiv g(1) \mod p$.
Therefore, there is the natural projection
\[ \mathbb{Z}_{(p)}[q]\big/\big([p]\big)\longrightarrow \mathbb{Z}_{(p)}/p\mathbb{Z}_{(p)}
\simeq \mathbb{Z}/p\mathbb{Z}\]
that sends $q$ to $1$.
This induces the surjective $\Q$-algebra map
\begin{equation*}\label{eq:map_phi_A}
\phi_{{\mathcal{A}}}: {\mathcal{Q}} \longrightarrow {\mathcal{A}}, \quad (f_p(q)\mod [p])_p\longmapsto (f_p(1)\mod p)_p.
\end{equation*}

The $\Q$-algebra map $\phi_{\mathcal{S}}$ is defined as the composition of $\Q$-algebra maps $ lim $ and $ ev $ given as follows.
Let $\Q(\zeta_p)$ be the $p$th cyclotomic field.
Set
\[\mathcal{Q}^{\rm an}=  \bigg(\prod_{p:{\rm prime}} \Q(\zeta_p)\bigg)\bigg/ \bigg(\bigoplus_{p:{\rm prime}} \Q(\zeta_p)\bigg).\]
Since $[p]=\prod_{a=1}^{p-1}(q-e^{2\pi ia/p})$, the map
\begin{align*}
 ev : \mathcal{Q}&\longrightarrow \mathcal{Q}^{\rm an},\quad
 (f_p(q)\mod [p])_p\longmapsto (f_p(e^{\frac{2\pi i}{p}}))_p
 \end{align*}
is a well-defined $\Q$-algebra map.
Consider the $\Q$-subalgebra
\[ \mathcal{O}^{\rm an}=\bigg\{(z_p)_p \in \prod_{p:{\rm prime}} \Q(\zeta_p) \, \bigg| \, \lim_{p\rightarrow \infty} z_p \,\, \hbox{converges} \bigg\}\]
of $\prod_{p:{\rm prime}} \Q(\zeta_p) $
and define
\begin{align*}
 lim : \mathcal{O}^{\rm an} &\longrightarrow \C,\quad
(z_p)_p \longmapsto \lim_{p\rightarrow \infty} z_p,
\end{align*}
which is also a $\Q$-algebra map.
Its kernel contains the $\Q$-subalgebra $\bigoplus_p \Q(\zeta_p)$.
Now let
\[ \mathcal{O}= ev ^{-1} \bigg(\mathcal{O}^{\rm an}\bigg/ \bigoplus_{p:{\rm prime}} \Q(\zeta_p)\bigg)\subset \mathcal{Q}\]
and define the $\Q$-algebra map
\[ \phi_{\mathcal{S}}=lim \circ  ev  : \mathcal{O}\longrightarrow \C,\]
which sends $ (f_p(q)\mod [p])_p$ to $\displaystyle\lim_{p\rightarrow\infty} f_p(e^{2\pi i/p})$.

Theorem \ref{thm:BTT} is now restated as follows.

\begin{theorem}\label{thm:AS_from_q}
For all index $\kk$ and $\bullet\in\{\emptyset,\star\}$, we have
\[ \phi_{{\mathcal{A}}} \big( \zeta_{{\mathcal{Q}}}^{\bullet}(\kk)\big) = \zeta_{{\mathcal{A}}}^\bullet(\kk)\]
and
\[ \phi_{{\mathcal{S}}} \big( \zeta_{{\mathcal{Q}}}^{\bullet}(\kk)\big) \equiv \zeta_{\mathcal{S}}^\bullet(\kk)\mod \pi i \mathcal{Z}[\pi i].\]
\end{theorem}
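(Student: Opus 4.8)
The plan is to deduce Theorem~\ref{thm:AS_from_q} directly from Theorem~\ref{thm:BTT}, by tracking how the two limiting operations encoded in $\phi_{\mathcal{A}}$ and $\phi_{\mathcal{S}}$ act on the canonical representative $(H_{p-1}^{\bullet}(\kk;q)\bmod[p])_p$ of $\zeta_{\mathcal{Q}}^{\bullet}(\kk)$. The point is that both sides of each asserted identity are, by construction, images of the \emph{same} element of $\mathcal{Q}$ under well-defined $\Q$-algebra maps, so it suffices to compare values componentwise and invoke that equality in $\mathcal{A}$ (resp.\ in $\C$ modulo $\pi i\mathcal{Z}[\pi i]$) only depends on the tail of the sequence.

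First I would handle the $\phi_{\mathcal{A}}$ statement. For each prime $p$ we have the reduction $\Z_{(p)}[q]/([p])\to\Z/p\Z$ sending $q\mapsto 1$, so $\phi_{\mathcal{A}}(\zeta_{\mathcal{Q}}^{\bullet}(\kk))=(H_{p-1}^{\bullet}(\kk;1)\bmod p)_p$. Now $H_{p-1}^{\bullet}(\kk;q)$ is a rational function of $q$ regular at $q=1$ for $p$ large (the denominators are products of $q$-integers $[m]$ with $1\le m\le p-1$, none of which vanish at $q=1$), and specializing each factor $q^{(k_a-1)m_a}/[m_a]^{k_a}$ at $q=1$ gives $1/m_a^{k_a}$; hence $H_{p-1}^{\bullet}(\kk;1)=H_{p-1}^{\bullet}(\kk)$ as rational numbers. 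This gives $\phi_{\mathcal{A}}(\zeta_{\mathcal{Q}}^{\bullet}(\kk))=(H_{p-1}^{\bullet}(\kk)\bmod p)_p=\zeta_{\mathcal{A}}^{\bullet}(\kk)$. Alternatively one may cite Theorem~\ref{thm:BTT}(i) under the identification $\Z[\zeta_p]/(1-\zeta_p)\cong\Z/p\Z$ together with the compatibility of $q\mapsto 1$ with $q\mapsto\zeta_p$ modulo $(1-\zeta_p)$; I would present the direct specialization argument as the cleaner route, but remark on the equivalence.

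Next the $\phi_{\mathcal{S}}$ statement. One must first check that $\zeta_{\mathcal{Q}}^{\bullet}(\kk)$ actually lies in the domain $\mathcal{O}$ of $\phi_{\mathcal{S}}$, i.e.\ that $\bigl(ev(\zeta_{\mathcal{Q}}^{\bullet}(\kk))\bigr)_p=\bigl(H_{p-1}^{\bullet}(\kk;e^{2\pi i/p})\bigr)_p$ has a convergent tail; this is exactly Theorem~\ref{thm:BTT}(ii), which asserts $\lim_{m\to\infty}H_{m-1}^{\bullet}(\kk;e^{2\pi i/m})=\xi^{\bullet}(\kk)$. (Restricting $m$ to primes is harmless since the full sequence converges.) Then by definition $\phi_{\mathcal{S}}(\zeta_{\mathcal{Q}}^{\bullet}(\kk))=\lim_{p\to\infty}H_{p-1}^{\bullet}(\kk;e^{2\pi i/p})=\xi^{\bullet}(\kk)$, and Theorem~\ref{thm:BTT}(ii) further gives $\xi^{\bullet}(\kk)\equiv\zeta_{\mathcal{S}}^{\bullet}(\kk)\bmod\pi i\mathcal{Z}[\pi i]$, which is the claim. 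The only subtlety worth spelling out is the index shift between $H_{p-1}$ in the definition of $\zeta_{\mathcal{Q}}$ and $H_{m-1}$ with $m\to\infty$ in Theorem~\ref{thm:BTT}(ii): these match on setting $m=p$, so no adjustment is needed.

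In short, this theorem is essentially a repackaging of Theorem~\ref{thm:BTT} in the language of $\mathcal{Q}$-MZVs, and there is no genuine obstacle; the only care required is bookkeeping — verifying that the canonical representative $(H_{p-1}^{\bullet}(\kk;q)\bmod[p])_p$ is a legitimate element of $\mathcal{Q}$ (resp.\ of $\mathcal{O}$), that specialization at $q=1$ commutes with reduction mod $[p]$, and that the $ev$ and $lim$ maps are applied to the correct representative. I would therefore keep the proof short, citing Theorem~\ref{thm:BTT} for both the $q=1$ and $q=e^{2\pi i/p}$ specializations and merely noting the well-definedness checks.
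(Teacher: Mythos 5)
Your proposal is correct and follows essentially the same route as the paper: the $\phi_{\mathcal{A}}$ statement is proved by specializing each factor $q^{(k-1)m}[m]^{-k}$ at $q=1$ to $m^{-k}$, and the $\phi_{\mathcal{S}}$ statement is deduced directly from Theorem \ref{thm:BTT}(ii). The extra bookkeeping you spell out (membership in $\mathcal{O}$, harmlessness of restricting the limit to primes) is implicit in the paper's shorter argument but does not change the approach.
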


\begin{proof}
The first equation is immediate from the identity
\begin{align*}
\phi_{{\mathcal{A}}}\big((q^{(k-1)m}[m]^{-k}\mod [p])_p\big)=
(m^{-k}\mod p)_p
\end{align*}
for any $m\in\{1,2,\ldots,p-1\}$.
The second congruence is a consequence of Theorem \ref{thm:BTT} (ii).
We complete the proof.
\end{proof}

An application to the study of $\mathcal{A}$-MZVs and $\mathcal{S}$-MZVs is as follows.
Applying $\phi_{\mathcal{A}}$ to \eqref{eq:rel's_Q-MZV}, we obtain
\[ \sum_{\substack{ \wt(\kk)=k}} c_{\kk,0}(0)\zeta_{\mathcal{A}}(\kk)=0.\]
On the other hand, since $(1-q)^jc_{\kk,j}(p)\big|_{q=e^{2\pi i/p}} =O(p^{\deg c_{\kk,j}-j})$, the image of the left side of \eqref{eq:rel's_Q-MZV} under the map $\phi_{\mathcal{S}}$ diverges, if $\deg c_{\kk,j}>j$.
Thus, under the assumption that $\deg c_{\kk,j}\le j$ for all $\kk$ and $j$, the left side of \eqref{eq:rel's_Q-MZV} lies in $\mathcal{O}$ and taking $\phi_{\mathcal{S}}$ gives
\[ \sum_{\substack{\wt(\kk)=k}} c_{\kk,0}(0)\zeta_{\mathcal{S}}(\kk)=0,\]
because $\phi_{\mathcal{S}}\big((1-q)^jc_{\kk,j}(\pp)\zeta_{\mathcal{Q}}(\kk)\big) \in \Q (\pi i)^j \xi(\kk)$.
This shows that the relation \eqref{eq:rel's_Q-MZV} among $\mathcal{Q}$-MZVs with the above assumption gives rise to relations among both $\mathcal{A}$-MZVs and $\mathcal{S}$-MZVs in the same shape.
As a quick example, by \eqref{eq:cong_wt1}, we get
\[ \zeta_{\mathcal{A}}(1)=0 \quad \mbox{and}\quad \zeta_{\mathcal{S}}(1)=0.\]

\begin{remark}\label{rem:ev_injective}
One can show that the map $ ev $ is injective.
This can be checked as follows.
Suppose that a primitive $p$-th root of unity $\zeta_p$ is zero of $f_p(q)$ for all but finitely many primes $p$.
By the action of ${\rm Gal}(\Q(\zeta_p)/\Q)$, we see that $\zeta_p^a \ (1\le a\le p-1)$ is also zero of $f_p(q)$.
Hence, $f_p(q)$ is divisible by $[p]$.
It is worth mentioning that the injectivity of the map $ ev $ implies that the equation \eqref{eq:rel's_Q-MZV} holds if and only if
\[ \sum_{j=0}^k (1-\zeta_p)^j \sum_{\substack{ \kk \\ \wt(\kk)=k-j}}c_{\kk,j}(p) \,H_{p-1}(\kk;\zeta_p) =0\]
holds in $\Q(\zeta_p)$ for all large primes $p$.
\end{remark}

\section{A refined Kaneko-Zagier conjecture: $\widehat{\mathcal{A}}$-MZV and $\widehat{\mathcal{S}}$-MZV}

Further studies of the Kaneko--Zagier conjecture have been made by many authors \cite{AHY,Jarossay18,Jarossay19,OnoSekiYamamoto,Rosen15,Rosen,Yasuda19}.
In this section, from their works we review a theory of $\widehat{\mathcal{A}}$-MZV and $\widehat{\mathcal{S}}$-MZV together with a conjectural relationship between them (which is called a refined version of the Kaneko--Zagier conjecture).

\subsection{Supercongruences for multiple harmonic sums: an overview}

Multiple harmonic sums satisfy mod $p^n$ congruences relations.
A systematic study of these relations is
due to Rosen \cite{Rosen15}.
As a brief overview of his works, let us begin with a version of Rosen's lifting conjecture \cite[Conjecture A]{Rosen15}.

\begin{conjecture}\label{conj:lifting}
Let $k$ be a positive integer and $p_1$ be a prime greater than $k+1$.
For a finite set $\{c_{\kk}\mid \wt(\kk)=k\}$ of rational numbers, suppose that the congruence
\[ \sum_{\wt(\kk)=k}  c_{\kk}H_{p-1}(\kk)\equiv 0 \mod p\]
 holds for all primes $p\ge p_1$.
Then, for each $n\ge2$, there exists a prime $p_n$ and a finite set $\{c_{\kk}^{(l)} \mid 1\le l\le n-1,\, \wt(\kk)=k+l\}$ of rational numbers such that $p_n\ge p_{n-1}$ and
the supercongruence
\begin{equation}\label{eq:supercongruence_Hp}
\sum_{\wt(\kk)=k}  c_{\kk} H_{p-1}(\kk) + \sum_{l=1}^{n-1}p^l\sum_{\wt(\kk)=k+l} c_{\kk}^{(l)} H_{p-1} (\kk)\equiv 0\mod p^n
\end{equation}
holds for all primes $p\ge p_n$.
\end{conjecture}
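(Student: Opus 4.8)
We do not know a proof of Conjecture~\ref{conj:lifting}; what follows is a plan of attack. The natural idea is to lift a given mod~$p$ relation one power of $p$ at a time. Suppose inductively that, for all large $p$, we have a supercongruence of the shape \eqref{eq:supercongruence_Hp} valid modulo $p^{n}$, and write $\Phi^{(n)}_{p}$ for its left-hand side (so $\Phi^{(1)}_{p}=\sum_{\wt(\kk)=k}c_{\kk}H_{p-1}(\kk)$ is the hypothesis). Since $\Phi^{(n)}_{p}\equiv 0\mod p^{n}$, the ``leading $p$-adic error'' $p^{-n}\Phi^{(n)}_{p}$ is a well-defined element of $\Z_{(p)}$, and the scheme runs provided one can prove
\[
p^{-n}\Phi^{(n)}_{p}\equiv\sum_{\wt(\bl)=k+n}d_{\bl}\,H_{p-1}(\bl)\mod p\qquad\text{for all large }p,
\]
with the $d_{\bl}\in\Q$ independent of $p$: subtracting $p^{n}\sum_{\wt(\bl)=k+n}d_{\bl}H_{p-1}(\bl)$ then gives $\Phi^{(n+1)}_{p}\equiv 0\mod p^{n+1}$, and taking $p_{n+1}\ge p_{n}$ large enough proves the conjecture --- in fact with correction coefficients compatible across all $n$, which is the ``coherent'' form underlying $\zeta_{\widehat{\mathcal{A}}}$. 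Thus Conjecture~\ref{conj:lifting} would follow from a single \emph{closure} statement: the leading $p$-adic error of any harmonic-sum supercongruence of top weight $w$ lies, modulo $p$, in the $\Q$-span of $\{\,H_{p-1}(\bl)\mid\wt(\bl)=w+1\,\}$, uniformly in $p$.

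We would first attack this closure statement inside the $q$-deformed framework of the paper. The $q$-world refines the mod-$p$ world through the surjection $\phi_{\mathcal{A}}$ of \S\ref{sec:QtoAS}, and --- once the $\widehat{\mathcal{Q}}$-machinery is in place --- through $\phi_{\widehat{\mathcal{A}}}$, which by $\phi_{\widehat{\mathcal{A}}}(\zeta_{\widehat{\mathcal{Q}}}(\kk))=\zeta_{\widehat{\mathcal{A}}}(\kk)$ transports all of the $p$-power data simultaneously; meanwhile the partial-fraction identity \eqref{eq:partial_franction_q}, applied repeatedly, rewrites each coefficient of the expansion of $H_{p-1}(\kk;q)$ in powers of $1-q$ as a $\Q$-linear combination of multiple harmonic $q$-sums --- precisely the mechanism producing relations of the form \eqref{eq:rel's_Q-MZV}. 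The target would then be the $q$-analogue of the lifting conjecture: every relation among the $\zeta_{\mathcal{A}}(\kk)$ of weight $k$ should extend to a relation \eqref{eq:rel's_Q-MZV} among the $\zeta_{\mathcal{Q}}(\kk)$, with the degree bound $\deg c_{\kk,j}\le j$ arranged so that, as in \S\ref{sec:QtoAS}, the relation descends simultaneously along $\phi_{\mathcal{A}}$, $\phi_{\mathcal{S}}$ and their hatted versions. By Theorem~\ref{thm:BTT}(i) and Remark~\ref{rem:ev_injective}, such a $q$-relation is equivalent to an honest identity in $\Q(\zeta_{p})$ for all large $p$; extracting its $(1-\zeta_{p})$-adic content, and using that $p$ and $(1-\zeta_{p})^{p-1}$ differ by a unit, would deliver the closure statement, hence Conjecture~\ref{conj:lifting}.

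The hard part is exactly this lifting step, and we do not see how to carry it out in the present setup: it is a ``no new relations after specialization'' assertion for the algebra of $\mathcal{Q}$-MZVs over $\Q[\pp]$ (specializing $\pp\mapsto 0$, equivalently $q\mapsto 1$), for which no structural mechanism is apparent to us. A more modest, still conjectural, route bypasses the $q$-world altogether: if every mod-$p$ relation of weight $k$ is a $\Q$-consequence of a fixed finite list --- duality, the sum and cyclic-sum formulas, Hoffman's relation, the regularized double-shuffle congruences --- then it suffices to lift each generator, and many such lifts are already known \cite{Rosen15,HiroseMuraharaOno,Kawasaki,Seki19,OnoSekiYamamoto}. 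That this list generates all relations among finite multiple zeta values is itself open --- the finite counterpart of the conjectured completeness of regularized double shuffle for ordinary multiple zeta values --- which is why Conjecture~\ref{conj:lifting} stays out of reach. The computations in weight $\le 5$ reported in \S\ref{sec:observations} are consistent with both the $q$-lifting statement above and with this generation hypothesis, and are the natural testbed for any candidate argument.
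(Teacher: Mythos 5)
This statement is Rosen's lifting conjecture (a version of \cite[Conjecture A]{Rosen15}); the paper states it without proof, uses it only as a hypothesis in Proposition~\ref{prop:implication}, and its sole comment on its status is the remark that it would follow from the injectivity of the $\mathcal{A}$-valued period map ${\rm per}_{\mathcal{A}}$ on ${\rm Fil}^0\mathcal{P}^{\mathfrak{dr}}/{\rm Fil}^1\mathcal{P}^{\mathfrak{dr}}$. You correctly recognize that the statement is open and do not claim a proof, so there is nothing to fault and nothing in the paper to compare a proof against. Your inductive reduction to a ``closure'' statement --- that the leading $p$-adic digit $p^{-n}\Phi^{(n)}_p$ of a weight-$(k+n-1)$ supercongruence should again lie, mod $p$, in the $\Q$-span of weight-$(k+n)$ multiple harmonic sums uniformly in $p$ --- is a sensible and more elementary reformulation than the paper's motivic one via ${\rm per}_{\mathcal{A}}$, and your proposed $q$-analogue (lifting relations among $\zeta_{\mathcal{A}}(\kk)$ to relations of the form \eqref{eq:rel's_Q-MZV} with $\deg c_{\kk,j}\le j$) is in the spirit of \S\ref{sec:observations}; but, as you say, both reductions leave the essential difficulty intact. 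One small caution: your scheme as written would produce correction coefficients $c^{(l)}_{\kk}$ that could in principle depend on the chain of choices made at each stage (the $d_{\bl}$ need not be unique, since the $H_{p-1}(\bl)$ of fixed weight satisfy their own mod-$p$ relations), which is consistent with the conjecture as stated but should be flagged if one wants the ``coherent'' form defining an element of $\widehat{\mathcal{A}}$.
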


Roughly, Conjecture \ref{conj:lifting} says that mod $p$ congruences of $H_{p-1}(\kk)$'s, which hold for all but finitely many primes $p$, can be lifted into mod $p^n$ congruences of $p^l H_{p-1}(\kk)$'s for each $n$, which also hold for all but finitely many primes $p$ (note that the number of exceptions of primes in the latter congruence may  be strictly bigger than the former ones for some $n$).

Let us illustrate two examples of lifts of mod $p$ congruences.
For each positive integer $n$, Wolfstenholme's theorem \cite{Wolstenholme62}, $H_{p-1}(1)\equiv 0\mod p^2$, can be lifted into
\begin{equation}\label{eq:wt1_Hp}
H_{p-1}(1)+ \sum_{l=0}^{n-2}p^l \, H_{p-1}(1+l)  \equiv 0 \mod p^n \quad (\forall p\gg 0:\mbox{prime}),
\end{equation}
which is a special case of Corollary 4.3.3 in \cite{RosenPHD}.
Hoffman's duality \eqref{eq:hoffman's_dual} extends to
\begin{equation}\label{eq:seki's_dual}
\sum_{l=0}^{n-1} p^l \bigg(H_{p-1}^\star(\underbrace{1,\ldots,1}_l,\kk) +H_{p-1}^\star(\underbrace{1,\ldots,1}_l,\kk^\vee)\bigg) \equiv 0 \mod{p^n} \quad (\forall p:\mbox{prime})
\end{equation}
(see \cite{Seki19}).
Note that without fixing a basis, the way of extension is of course not unique.
For example, one has
\[ H_{p-1}(1) + \frac13 p^2H_{p-1}(2,1)-\frac16 p^4 H_{p-1}(4,1)-\frac19 p^5 H_{p-1}(4,1,1) \equiv 0 \mod{p^6},\]
which again recovers Wolfstenholme's theorem.

\begin{remark}
One can show that the injectivity of the $\mathcal{A}$-valued period map ${\rm per}_{\mathcal{A}}:{\rm Fil}^0\mathcal{P}^{\mathfrak{dr}}\big/{\rm Fil}^1\mathcal{P}^{\mathfrak{dr}} \rightarrow \mathcal{A}$, introduced by Rosen \cite{Rosen}, implies Conjecture \ref{conj:lifting}, where ${\rm Fil}^\bullet$ is a decreasing filtration coming from the Hodge filtration on algebraic de Rham cohomology.
In our case, we consider $\mathcal{P}^{\mathfrak{dr}}$ as the ring of de Rham periods of mixed Tate motives over $\Z$.
The map ${\rm per}_{\mathcal{A}}$ is an analogue of the period map, and is expected to be injective \cite[Theorem 6.4]{Rosen}.
\end{remark}

\subsection{$\widehat{\mathcal{A}}$-MZV}
To deal with congruences of type \eqref{eq:supercongruence_Hp},
Rosen \cite{Rosen15} introduces the $\Q$-algebra
\begin{align*}
\mathcal{A}_n=
\bigg(\prod_{p:{\rm prime}} \Z/p^n \Z\bigg) \bigg/\bigg( \bigoplus_{p:{\rm prime}} \Z/p^n \Z\bigg).
\end{align*}
For each index $\kk$, let
\[ \zeta_{\mathcal{A}_n} (\kk)=\big( H_{p-1}(\kk)\mod p^n\big)_p \in \mathcal{A}_n\]
and set $\pp=\big( p\mod p^n\big)_p$.
In $\mathcal{A}_n$, \eqref{eq:supercongruence_Hp} can be written as an identity
\[ \sum_{\wt(\kk)=k}  c_{\kk} \zeta_{\mathcal{A}_n}(\kk) + \sum_{l=1}^{n-1}\pp^l\sum_{\wt(\kk)=k+l} c_{\kk}^{(l)} \zeta_{\mathcal{A}_n} (\kk)=0.\]
Relations among $\zeta_{\mathcal{A}_n}(\kk)$ are of particular interest and discussed in \cite{MuraharaOnozukaSeki20,OnoSakuradaSeki}.

As a completion of $\mathcal{A}_n$, Rosen \cite[Definition 3.2]{Rosen15} defines the $\Q$-algebra $\widehat{\mathcal{A}}$ to be
the projective limit
\begin{equation}\label{eq:def_hatA}
\widehat{\mathcal{A}}=\varprojlim_n \mathcal{A}_n,
\end{equation}
where for each $n\ge1$ the transition map $\varphi_n:\mathcal{A}_{n+1}\rightarrow\mathcal{A}_n$ is induced from the natural projection $\Z/p^{n+1}\Z \to \Z/p^n\Z$.
Note that $\widehat{\mathcal{A}}$ is complete under the $\pp$-adic topology,
where $\pp=\big((p\mod p^n)_p\big)_n \in \widehat{\mathcal{A}}$.
Any element of $\widehat{\mathcal{A}}$ is written as
$\big((a_{p, n})_{p}\big)_{n}$ with
$(a_{p, n})_{p} \in \mathcal{A}_n$ satisfying $\varphi_{n}\big((a_{p, n+1})_{p} \big)=(a_{p, n})_p$ for all $n\ge1$.
Then two elements $\big((a_{p,n})_p \big)_n$ and $\big((b_{p,n})_p \big)_n$ of $\widehat{\mathcal{A}}$
are identified if and only if there exists an increasing sequence $\{p_n\}_{n\ge1}$ of primes
such that $a_{p,n}\equiv b_{p,n}\mod p^n$ for all primes $p\ge p_n$ and $n\ge1$.
We remark that there exists a natural surjective homomorphism (see \cite[\S3.2]{Rosen15})
\begin{equation*}
\prod_p \Z_p \rightarrow \widehat{\mathcal{A}}.
\end{equation*}

The following definition was first made by Rosen \cite{Rosen15}
(we use Seki's modification \cite{Seki19}).

\begin{definition}\label{def:hatA-MZV}
For an index $\kk$, we define the \emph{$\widehat{\mathcal{A}}$-MZV} $\zeta_{\widehat{\mathcal{A}}}(\kk) \in \widehat{\mathcal{A}}$ by
\[ \zeta_{\widehat{\mathcal{A}}}(\kk)= \big( (H_{p-1}(\kk) \mod{p^n})_p\big)_n \]
and the \emph{$\widehat{\mathcal{A}}$-MZSV} $\zeta_{\widehat{\mathcal{A}}}^\star(\kk) \in \widehat{\mathcal{A}}$ by
\[\zeta_{\widehat{\mathcal{A}}}^\star(\kk)=\left( (H_{p-1}^\star(\kk) \mod{p^n})_p\right)_n.\]
\end{definition}

For each weight $k$, $\widehat{\mathcal{A}}$-MZVs satisfy numerous relations of the form
\[ \sum_{l\ge0}\pp^l\sum_{\wt(\kk)=k+l} c_{\kk}^{(l)} \zeta_{\widehat{\mathcal{A}}} (\kk)=0\]
with rational numbers $c_{\kk}^{(l)}$.
For example, the relations \eqref{eq:wt1_Hp} and \eqref{eq:seki's_dual} are in $\widehat{\mathcal{A}}$ expressed as
\begin{equation}\label{eq:wt1}
\zeta_{\widehat{\mathcal{A}}}(1)+ \sum_{l\ge 0}\pp^l \, \zeta_{\widehat{\mathcal{A}}}(1+l)=0
\end{equation}
and
\begin{equation}\label{eq:hoffman_dual_hatA}
\sum_{l\ge0}\pp^l\, \big( \zeta_{\widehat{\mathcal{A}}}^\star(\underbrace{1,\ldots,1}_l,\kk) +\zeta_{\widehat{\mathcal{A}}}^\star(\underbrace{1,\ldots,1}_l,\kk^\vee)\big)=0,
\end{equation}
respectively.

\subsection{$\widehat{\mathcal{S}}$-MZV}
As a counterpart of $\widehat{\mathcal{A}}$-MZVs, for each index $\kk$ we define
\[\zeta^\bullet_{\widehat{\mathcal{S}}}(\kk)=\sum_{a=0}^{\dep(\kk)} (-1)^{\wt(\kk_a)}
\sum_{\bl\in \Z_{\ge0}^a} t^{\wt(\bl)} b\binom{\kk_a}{\bl} \zeta^\bullet (\overline{\kk_a+\bl};T)\zeta^\bullet(\kk^a;T) \in \mathcal{Z}[[t]]\]
with the regularization $\bullet\in\{\ast,\shuffle\}$.
The above right side does not depend on $T$
(see, for example, \cite[Proposition 2.3]{OnoSekiYamamoto}).
Hence, for each index $\kk$ and $\bullet\in \{\ast,\shuffle\}$,
the $\zeta^\bullet_{\widehat{\mathcal{S}}}(\kk)$ lies in the formal power series ring $\mathcal{Z}[[t]]$ over $\mathcal{Z}$.
It was shown in \cite[Proposition 3.2.4]{Jarossay19} (see also \cite[Proposition 2.1]{OnoSekiYamamoto}) that the equality
\[ \zeta^\ast_{\widehat{\mathcal{S}}}(\kk) \equiv
\zeta^\shuffle_{\widehat{\mathcal{S}}}(\kk) \mod \pi i \mathcal{Z}[\pi i][[t]]\]
holds for each index $\kk$.
Hence,
the following definition is independent of the choice of the regularization $\bullet\in\{\ast,\shuffle\}$.

\begin{definition}\label{def:hatS-MZV}
For an index $\kk$, we define the \emph{$\widehat{\mathcal{S}}$-MZV} $\zeta_{\widehat{\mathcal{S}}}(\kk)\in \overline{\mathcal{Z}[\pi i]}[[t]]$ by
\[\zeta_{\widehat{\mathcal{S}}}(\kk)=\zeta^\ast_{\widehat{\mathcal{S}}}(\kk) \mod \pi i \mathcal{Z}[\pi i][[t]],\]
where we set
\[\overline{\mathcal{Z}[\pi i]}[[t]] =\big(\mathcal{Z}[\pi i]/\pi i \mathcal{Z}[\pi i]\big)[[t]].\]
The \emph{$\widehat{\mathcal{S}}$-MZSV} is defined by
\[ \zeta^\star_{\widehat{\mathcal{S}}}(k_1,\ldots,k_d) =
\sum_{\substack{\footnotesize \square \ \mbox{is either a comma `,'}\\ \footnotesize \mbox{or a plus `$+$'}}}
\zeta_{\widehat{\mathcal{S}}}(k_1 \square \cdots \square k_d).\]
\end{definition}

Note that the constant term of $\zeta_{\widehat{\mathcal{S}}}(\kk)$ in $t$ coincides with $\zeta_{{\mathcal{S}}}(\kk)$.
Studies of the $\widehat{\mathcal{S}}$-MZV are initiated by Hirose, Rosen and Jarossay (he called $\widehat{\mathcal{S}}$-MZV, without taking modulo $\pi i$, the $\Lambda$-adic adjoint MZV), independently.
Relations among $\widehat{\mathcal{S}}$-MZVs of the form
\[ \sum_{l\ge0}t^l\sum_{\wt(\kk)=k+l} c_{\kk}^{(l)} \zeta_{\widehat{\mathcal{S}}} (\kk)=0\]
are particular interest in this study.
For example, as a counterpart of \eqref{eq:hoffman_dual_hatA}, Hirose obtained
\begin{equation}\label{eq:hoffman_dual_hatS}
\sum_{l=0}^{\infty} t^l\,\big(\zeta_{\widehat{\mathcal{S}}}^\star(\underbrace{1,\ldots,1}_l,\kk) + 
\zeta_{\widehat{\mathcal{S}}}^\star(\underbrace{1,\ldots,1}_l,\kk^\vee)\big)=0
\end{equation}
in his unpublished work.

\subsection{$\widehat{\mathcal{A}}$-MZV vs.~$\widehat{\mathcal{S}}$-MZV}
Similarly to the Kaneko--Zagier conjecture (Conjecture \ref{conj:Kaneko-Zagier}), there is a conjectural correspondence between $\widehat{\mathcal{A}}$-MZVs and $\widehat{\mathcal{S}}$-MZVs.

\begin{conjecture}[A refined version of the Kaneko--Zagier conjecture]\label{conj:refined_KZ}
Let $k$ be a positive integer.
For rational numbers $\{c_{\kk}^{(l)} \in \Q \mid \wt(\kk)=k+l,\ l\ge0\}$, we have
\begin{align*}
&\sum_{l\ge0} \pp^{l} \left( \sum_{\wt(\kk)=k+l} c_{\kk}^{(l)}   \zeta_{\widehat{\mathcal{A}}}(\kk) \right) =0\  \mbox{in}\ \widehat{\mathcal{A}}\\
\stackrel{?}{\Longleftrightarrow}& \sum_{l\ge0} t^{l}\left( \sum_{\wt(\kk)=k+l} c_{\kk}^{(l)}   \zeta_{\widehat{\mathcal{S}}}(\kk) \right)=0 \  \mbox{in}\ \overline{\mathcal{Z}}[[t]].
\end{align*}
\end{conjecture}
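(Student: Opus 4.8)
Since Conjecture~\ref{conj:refined_KZ} is, as stated, open, what I would aim for is not a proof but a \emph{reduction}, following the pattern by which Theorem~\ref{thm:AS_from_q} organizes the unrefined picture through $\mathcal{Q}$-MZVs. The plan is to factor both sides of the claimed equivalence through the single ``$q$-world'' invariant $\zeta_{\widehat{\mathcal{Q}}}(\kk)\in\widehat{\mathcal{Q}}$ introduced in \S4. That section produces $\Q$-algebra maps $\phi_{\widehat{\mathcal{A}}}$ and $\phi_{\widehat{\mathcal{S}}}$ with $\phi_{\widehat{\mathcal{A}}}\big(\zeta_{\widehat{\mathcal{Q}}}(\kk)\big)=\zeta_{\widehat{\mathcal{A}}}(\kk)$ and $\phi_{\widehat{\mathcal{S}}}\big(\zeta_{\widehat{\mathcal{Q}}}(\kk)\big)\equiv\zeta_{\widehat{\mathcal{S}}}(\kk)\bmod\pi i$, cf.\ \eqref{eq:generalization_BTT}; I would first record that these maps are continuous for the relevant adic topologies, send the element of $\widehat{\mathcal{Q}}$ playing the role of $p$ to $\pp\in\widehat{\mathcal{A}}$ and to $t\in\overline{\mathcal{Z}[\pi i]}[[t]]$ respectively, and that $\phi_{\widehat{\mathcal{A}}}$ kills the ``$(1-q)$-direction'' that $\phi_{\widehat{\mathcal{S}}}$ converts into explicit powers of $\pi i$. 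Granting this, any identity $\sum_{l\ge0}\pp^l\sum_{\wt(\kk)=k+l}c_{\kk}^{(l)}\zeta_{\widehat{\mathcal{Q}}}(\kk)=0$ in $\widehat{\mathcal{Q}}$ --- up to the admissible $(1-q)$-corrections of shape \eqref{eq:rel's_Q-MZV}, with the degree bounds that keep $\phi_{\widehat{\mathcal{S}}}$ defined, as in the discussion after Theorem~\ref{thm:AS_from_q} --- pushes forward under $\phi_{\widehat{\mathcal{A}}}$ and $\phi_{\widehat{\mathcal{S}}}$ to the two displayed relations of Conjecture~\ref{conj:refined_KZ} with the \emph{same} scalars $c_{\kk}^{(l)}$.

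Consequently the equivalence would follow once one knows that (a) every relation among $\widehat{\mathcal{A}}$-MZVs lifts to a relation among $\widehat{\mathcal{Q}}$-MZVs of shape \eqref{eq:rel's_Q-MZV}, and (b) every relation among $\widehat{\mathcal{S}}$-MZVs lifts likewise. These two lifting statements are the main obstacle: they are $q$-deformation strengthenings of Rosen's lifting conjecture (Conjecture~\ref{conj:lifting}), asserting that a mod-$p^n$ family of supercongruences among the $H_{p-1}(\kk)$ --- respectively a $t$-adic family of identities among $\widehat{\mathcal{S}}$-MZVs --- can be realized by a single congruence among multiple harmonic $q$-sums $H_{p-1}(\kk;q)$ modulo $[p]$. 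Neither is available in general, and on the $\widehat{\mathcal{S}}$-side there is the added difficulty that $\widehat{\mathcal{S}}$-MZVs are understood only through regularization and motivic input, so even the shape of a candidate $q$-lift is unclear. I would therefore not expect to close Conjecture~\ref{conj:refined_KZ}; instead, as \S\ref{sec:rel} carries out, one verifies the easy direction for all the concrete families at hand --- reversal, duality and cyclic sum formulas --- by lifting each to an explicit identity among $\widehat{\mathcal{Q}}$-MZVs and then specializing, which simultaneously yields the corresponding $\widehat{\mathcal{A}}$- and $\widehat{\mathcal{S}}$-identities and so gives strong evidence.

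To tie the refined statement back to the classical one, I would finally prove Proposition~\ref{prop:implication}: reduce a given $\widehat{\mathcal{A}}$-relation modulo $\pp$ to obtain a weight-$k$ $\mathcal{A}$-relation; invoke Rosen's lifting conjecture (Conjecture~\ref{conj:lifting}) so that \emph{every} weight-$k$ $\mathcal{A}$-relation arises this way from some $\widehat{\mathcal{A}}$-relation; apply Conjecture~\ref{conj:refined_KZ} to get the matching $\widehat{\mathcal{S}}$-relation; and extract its constant term in $t$, which by the remark after Definition~\ref{def:hatS-MZV} is exactly the corresponding $\mathcal{S}$-relation. The reverse implication is analogous, and this exhibits the refined conjecture as consistent with, and strictly finer than, Conjecture~\ref{conj:Kaneko-Zagier}.
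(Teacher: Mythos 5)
This statement is a conjecture that the paper leaves open, so there is no proof of it in the paper to compare against; your proposal correctly recognizes this rather than claiming a proof. The reduction you outline --- factoring both sides through $\zeta_{\widehat{\mathcal{Q}}}(\kk)$ via $\phi_{\widehat{\mathcal{A}}}$ and $\phi_{\widehat{\mathcal{S}}}$, isolating the missing lifting statements as the obstruction, verifying the concrete families (reversal, duality, cyclic sum) as evidence, and deducing Conjecture~\ref{conj:Kaneko-Zagier} from Conjectures~\ref{conj:refined_KZ} and~\ref{conj:lifting} --- is precisely the program the paper carries out in Theorems~\ref{thm:hatA_from_q} and~\ref{thm:hatS_from_q}, \S\ref{sec:rel}, and Proposition~\ref{prop:implication} (where, as a small point, the reverse implication is not merely ``analogous'' but rests on Yasuda's theorem to lift an $\mathcal{S}$-relation $t$-adically).
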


Similar statements with Conjecture \ref{conj:refined_KZ} can be found in \cite[Conjecture 5.3.2]{Jarossay19}, \cite[Conjecture 2.3]{Rosen19} and \cite[Conjecture 4.3]{OnoSekiYamamoto}.
Several evidence of Conjecture \ref{conj:refined_KZ} are known.
The duality formulas \eqref{eq:hoffman_dual_hatA} and \eqref{eq:hoffman_dual_hatS} are one of examples.
Also, a kind of double shuffle relations for both $\widehat{\mathcal{A}}$-MZVs and $\widehat{\mathcal{S}}$-MZVs in the same shape is obtained in \cite{Jarossay19,OnoSekiYamamoto}.

We mention a relationship with the Kaneko--Zagier conjecture (Conjecture \ref{conj:Kaneko-Zagier}).

\begin{proposition}\label{prop:implication}
Under the assumption that Conjecture \ref{conj:lifting} holds, Conjecture \ref{conj:refined_KZ} implies Conjecture \ref{conj:Kaneko-Zagier}.
\end{proposition}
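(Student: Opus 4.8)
The plan is to show that a $\mathcal{Q}$-like argument reduces Conjecture \ref{conj:Kaneko-Zagier} to Conjecture \ref{conj:refined_KZ} by producing, from a weight-$k$ relation of $\mathcal{A}$-MZVs, a corresponding relation of $\widehat{\mathcal{A}}$-MZVs of the form appearing in Conjecture \ref{conj:refined_KZ}, and conversely. Concretely: suppose $\sum_{\wt(\kk)=k}c_{\kk}\zeta_{\mathcal{A}}(\kk)=0$. By definition of $\mathcal{A}$ this means $\sum_{\wt(\kk)=k}c_{\kk}H_{p-1}(\kk)\equiv 0\bmod p$ for all large primes $p$. I would first clear denominators (and enlarge the lower bound on $p$) so that the hypothesis of Conjecture \ref{conj:lifting} is met with some $p_1>k+1$. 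Applying Conjecture \ref{conj:lifting} for every $n\ge 2$ and organizing the resulting rational numbers $c_{\kk}^{(l)}$ (with $c_{\kk}^{(0)}=c_{\kk}$) into a single family, I want to assemble a supercongruence valid to all orders. The subtlety here is that Conjecture \ref{conj:lifting} only guarantees, for each fixed $n$, a \emph{finite} tail $\sum_{l=1}^{n-1}$ with coefficients that may in principle depend on $n$; so the first real step is to argue that these can be chosen \emph{coherently} in $n$, i.e.\ that the order-$n$ lift refines the order-$(n-1)$ lift. This coherence follows because the difference of two lifts of the same mod-$p^{n-1}$ congruence is itself a mod-$p^{n-1}$ congruence multiplied by $p^{n-1}$, and one iterates; alternatively one simply takes $c_{\kk}^{(l)}$ from the order-$(l+1)$ step and checks by induction that the partial sums satisfy \eqref{eq:supercongruence_Hp} for all $n$. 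Once the $c_{\kk}^{(l)}$ are fixed, the collection $\{(H_{p-1}(\kk)\bmod p^n)_p\}_n$ packaging gives exactly
\[
\sum_{l\ge 0}\pp^l\Bigl(\sum_{\wt(\kk)=k+l}c_{\kk}^{(l)}\zeta_{\widehat{\mathcal{A}}}(\kk)\Bigr)=0 \quad\text{in }\widehat{\mathcal{A}},
\]
since every truncation vanishes in $\mathcal{A}_n$ by construction and $\widehat{\mathcal{A}}=\varprojlim_n\mathcal{A}_n$ is $\pp$-adically complete.

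Next I would invoke Conjecture \ref{conj:refined_KZ} (the $\Longrightarrow$ direction) to convert this into
\[
\sum_{l\ge 0}t^l\Bigl(\sum_{\wt(\kk)=k+l}c_{\kk}^{(l)}\zeta_{\widehat{\mathcal{S}}}(\kk)\Bigr)=0 \quad\text{in }\overline{\mathcal{Z}}[[t]].
\]
Finally, comparing constant terms in $t$ (the $l=0$ component), and using the remark after Definition \ref{def:hatS-MZV} that the $t^0$-coefficient of $\zeta_{\widehat{\mathcal{S}}}(\kk)$ is $\zeta_{\mathcal{S}}(\kk)$, I get $\sum_{\wt(\kk)=k}c_{\kk}\zeta_{\mathcal{S}}(\kk)=0$ in $\overline{\mathcal{Z}}=\mathcal{Z}/\pi^2\mathcal{Z}$, which is one implication of Conjecture \ref{conj:Kaneko-Zagier}.

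For the reverse implication, suppose $\sum_{\wt(\kk)=k}c_{\kk}\zeta_{\mathcal{S}}(\kk)=0$. This is the $t^0$-part of a would-be $\widehat{\mathcal{S}}$-relation, but there is no lifting hypothesis on the $\mathcal{S}$-side analogous to Conjecture \ref{conj:lifting}; however, $\widehat{\mathcal{S}}$-MZVs are genuine power series in $\mathcal{Z}[[t]]$, so here I would argue \emph{directly} that the constant-term relation already forces (after possibly adjusting by higher-$l$ terms) a full relation in $\overline{\mathcal{Z}}[[t]]$. The cleanest route: from $\sum c_{\kk}\zeta_{\mathcal{S}}(\kk)=0$ we need to produce $c_{\kk}^{(l)}$ for $l\ge 1$ with $\sum_{l\ge 0}t^l\sum_{\wt(\kk)=k+l}c_{\kk}^{(l)}\zeta_{\widehat{\mathcal{S}}}(\kk)=0$; this is possible because the $t$-expansion of $\sum c_{\kk}\zeta_{\widehat{\mathcal{S}}}(\kk)$ has vanishing constant term, so it is $t$ times an element of $\overline{\mathcal{Z}}[[t]]$, and one peels off coefficients degree by degree, at each stage using that weight-$(k+l)$ $\widehat{\mathcal{S}}$-MZVs span the relevant graded piece (Yasuda's surjectivity \cite{Yasuda16}, in its $t$-graded form) to realize each coefficient as a $\Q$-combination $\sum_{\wt(\kk)=k+l}c_{\kk}^{(l)}\zeta_{\mathcal{S}}(\kk)$ — but one must be careful that peeling off lower-degree corrections does not disturb the constant term, which it does not since corrections are multiplied by positive powers of $t$. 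Having such a relation in $\overline{\mathcal{Z}}[[t]]$, Conjecture \ref{conj:refined_KZ} (the $\Longleftarrow$ direction) yields the matching $\widehat{\mathcal{A}}$-relation, and projecting $\widehat{\mathcal{A}}\to\mathcal{A}$ (killing $\pp$, i.e.\ taking $n=1$) extracts $\sum_{\wt(\kk)=k}c_{\kk}\zeta_{\mathcal{A}}(\kk)=0$.

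The main obstacle I anticipate is exactly the \textbf{coherence of the lifts} in the forward direction: Conjecture \ref{conj:lifting} as stated gives, for each $n$ separately, coefficients $c_{\kk}^{(l)}$ ($1\le l\le n-1$) and an exceptional prime bound $p_n$, with no built-in compatibility between the level-$n$ and level-$(n+1)$ choices. To make the projective-limit assembly legitimate I must show the choices can be made stable, i.e.\ that one may take the level-$(n+1)$ coefficients to agree with the level-$n$ ones on $l\le n-1$. I expect this to go through by the iterative argument sketched above — the obstruction to extending a level-$n$ lift to level $n+1$ lives in $p^n\cdot(\text{mod-}p\text{ congruences})$ and is annihilated by another application of the base hypothesis — but writing it carefully, including tracking the increasing sequence of exceptional primes so that the final identity genuinely holds in $\widehat{\mathcal{A}}=\varprojlim\mathcal{A}_n$, is the technical heart of the proof. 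The reverse direction's use of Yasuda's theorem to express arbitrary elements of $\overline{\mathcal{Z}}$ via $\mathcal{S}$-MZVs is routine by comparison, though one should note it requires working with the full (all-weights) span rather than a single graded piece, so a short remark on gradedness is warranted.
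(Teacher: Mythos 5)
Your proposal is correct and follows essentially the same route as the paper: Conjecture~\ref{conj:lifting} supplies the lift of the $\mathcal{A}$-relation to an $\widehat{\mathcal{A}}$-relation, Conjecture~\ref{conj:refined_KZ} transfers it to $\widehat{\mathcal{S}}$, and comparing constant terms in $t$ gives one implication, while the converse uses Yasuda's theorem to lift the $\mathcal{S}$-relation inductively in $t$ before applying Conjecture~\ref{conj:refined_KZ} in the other direction and reducing modulo $\pp$ via $\widehat{\mathcal{A}}/\pp\widehat{\mathcal{A}}\cong\mathcal{A}$. The coherence-of-lifts issue you devote most attention to is simply absorbed into the paper's reading of Conjecture~\ref{conj:lifting} (a single family $c_{\kk}^{(l)}$ independent of $n$, with the exceptional bounds $p_n$ increasing), so no separate argument is given for it there.
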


\begin{proof}
Assume that we have
\begin{equation}\label{eq:A_rel}
\sum_{\wt(\kk)=k} c_{\kk}^{(0)} \zeta_{\mathcal{A}}(\kk) =0.
\end{equation}
Conjecture \ref{conj:lifting} says that there exist rational numbers $c_{\kk}^{(l)} \ (l\ge1)$ such that
\begin{equation}\label{eq:hatA_rel}
\sum_{l\ge0}\pp^l \sum_{\wt(\kk)=k+l} c_{\kk}^{(l)} \zeta_{\widehat{\mathcal{A}}}(\kk) =0.
\end{equation}
Using Conjectures \ref{conj:refined_KZ}, we obtain
\begin{equation}\label{eq:hatS_rel}
\sum_{l\ge0}t^l \sum_{\wt(\kk)=k+l} c_{\kk}^{(l)} \zeta_{\widehat{\mathcal{S}}}(\kk) =0.
\end{equation}
Comparing the constant terms of both sides, one has
\begin{equation}\label{eq:S_rel}
\sum_{\wt(\kk)=k} c_{\kk}^{(0)} \zeta_{\mathcal{S}}(\kk) =0,
\end{equation}
so, $\eqref{eq:A_rel} \Rightarrow \eqref{eq:S_rel}$.
For the opposite implication $\eqref{eq:S_rel} \Rightarrow \eqref{eq:A_rel}$,
Yasuda's result \cite{Yasuda16}, stating that all MZVs of weight $k$ are $\Q$-linear combinations of $\mathcal{S}$-MZVs of weight $k$, determines a lift of \eqref{eq:S_rel} to \eqref{eq:hatS_rel}, inductively (the same argument is used in the proof of \cite[Proposition 4.4]{OnoSekiYamamoto}).
Using Conjecture \ref{conj:refined_KZ} and then applying to \eqref{eq:hatA_rel} the natural isomorphism $\widehat{\mathcal{A}}\big/\pp\widehat{\mathcal{A}}\cong \mathcal{A} $ given by $\zeta_{\widehat{\mathcal{A}}}(\kk) \mod \pp\mapsto \zeta_{\mathcal{A}}(\kk)$, we get \eqref{eq:A_rel}.
We complete the proof.
\end{proof}

\section{$\widehat{\mathcal{Q}}$-MZV}
In this section, we introduce a $\widehat{\mathcal{Q}}$-MZV and give a generalization of Theorem \ref{thm:AS_from_q}.

\subsection{The $\Q$-algebra $\widehat{\mathcal{Q}}$}
Let us first extend the framework of $\mathcal{Q}$-MZVs.

For each $n\ge1$ and a prime $p$,
let $\big([p]^n \big)$ be the ideal of $\Z_{(p)}[q]$ generated by the polynomial $[p]^n$,
where $\Z_{(p)}[q]$ denotes the polynomial ring in $q$ over the ring $\Z_{(p)}$ of rational numbers
whose denominators are not divisible by $p$.
Consider the rings
\begin{align*}
Z_{p, n}=\mathbb{Z}_{(p)}[q]\big/\big([p]^{n}\big)\quad \mbox{and}\quad
\mathcal{Q}_{n}=\bigg(\prod_{p}Z_{p, n}\bigg)\bigg/\bigg(\bigoplus_{p}Z_{p,n}\bigg),
\end{align*}
where $p$ runs over all primes.
Note that $\mathcal{Q}=\mathcal{Q}_1$.
An element of $\mathcal{Q}_{n}$ is of the form $(f_{p,n})_p$, where $p$ runs over all primes and $f_{p,n}\in Z_{p,n}$.
Two elements $(f_{p,n})_p$ and $(g_{p,n})_p$ are identified if and only if $f_{p,n}=g_{p,n}$ for all but finitely many primes $p$.
One can embed $\Q[q]$ into $\mathcal{Q}_{n}$ diagonally.
Component-wise addition, multiplication and scalar multiplication by $\Q[q]$
equip ${\mathcal{Q}}_n$ with the structure of a $\Q[q]$-algebra.

Similarly to the $\Q$-algebra $\widehat{\mathcal{A}}$,
we define the $\Q[q]$-algebra $\widehat{\mathcal{Q}}$ to be
the projective limit of the system of rings $\{\mathcal{Q}_n\}$:
\begin{equation*}\label{eq:def_hatQ}
\widehat{\mathcal{Q}}=\varprojlim_n \mathcal{Q}_n,
\end{equation*}
where for each $n\ge1$ the transition map $\varphi_n:\mathcal{Q}_{n+1}\rightarrow\mathcal{Q}_n$ is induced from the natural projection $Z_{p, n+1} \to Z_{p, n}$.
It is endowed with the projective limit topology induced from
the discrete topology on $\mathcal{Q}_{n}$.
We also have a natural surjective homomorphism
\begin{equation*}
\prod_p Z_p \rightarrow \widehat{\mathcal{Q}},
\end{equation*}
where $Z_p=\varprojlim_n Z_{p,n}$ denotes the ring of $[p]$-adic integers (cf.~\cite[Lemma 2.3]{Seki19}).

Any element of $\widehat{\mathcal{Q}}$ is written as
$\big((f_{p, n})_{p}\big)_{n}$ with
$(f_{p, n})_{p} \in \mathcal{Q}_n$ satisfying $\varphi_{n}\big((f_{p, n+1})_{p} \big)=(f_{p, n})_p$ for all $n\ge1$.
Then two elements $\big((f_{p,n})_p \big)_n$ and $\big((g_{p,n})_p \big)_n$ of $\widehat{\mathcal{Q}}$
are identified if and only if there exists an increasing sequence $\{p_n\}_{n\ge1}$ of primes
such that $f_{p,n}=g_{p,n}$ in $Z_{p, n}$ for all primes $p\ge p_n$ and $n\ge1$.

Define the element $[\pp]$ of $\widehat{\mathcal{Q}}$ by
\begin{align*}
[\pp]=\big(([p] \mod [p]^{n})_{p}\big)_{n}.
\end{align*}
The continuous homomorphism $\widehat{\mathcal{Q}}\rightarrow \mathcal{Q}_n$ induces $\mathcal{Q}_n\cong \widehat{\mathcal{Q}}/[\pp]^n\widehat{\mathcal{Q}}$.
The projective limit topology on $\widehat{\mathcal{Q}}$ coincides with the $[\pp]$-adic topology and $\widehat{\mathcal{Q}}$ is complete under the $[\pp]$-adic topology (cf.~\cite[Lemma 2.5]{Seki19}).

\subsection{Definition of $\widehat{\mathcal{Q}}$-MZV}

We begin with the following lemma.

\begin{lemma}\label{lem:[m]-inverse}
Let $p$ be a prime and $p>m\ge 1$.
For any $n \ge 1$, the $q$-integer $[m]$ is invertible in $Z_{p, n}$.
More explicitly its inverse is given by
\begin{align*}
\frac{1}{[m]}\equiv [l]_{q^{m}}\sum_{j=0}^{n-1}
\left(-q[\alpha]_{q^{p}}[p]\right)^{j} \quad \mod [p]^{n},
\end{align*}
where $l$ and $\alpha$ are the integers satisfying
$p>l\ge 1, \alpha \ge 0$ and $ml-p \alpha=1$.
\end{lemma}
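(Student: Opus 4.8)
### Proof plan for Lemma \ref{lem:[m]-inverse}

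The plan is to verify the claimed formula directly by multiplying it by $[m]$ and showing the product is $\equiv 1 \mod [p]^n$. The starting point is the elementary identity for $q$-integers: since $ml = p\alpha + 1$, we have $1 - q^{ml} = 1 - q^{p\alpha+1} = 1 - q \cdot q^{p\alpha} = (1-q) + q(1 - q^{p\alpha})$, hence, dividing by $1-q$,
\begin{equation*}
[ml] = 1 + q[p\alpha]_{q} = 1 + q[\alpha]_{q^p}[p].
\end{equation*}
On the other hand $[ml] = [m]\,[l]_{q^m}$ (the standard factorization $[ab]_q = [a]_q[b]_{q^a}$). Therefore $[m]\,[l]_{q^m} = 1 + q[\alpha]_{q^p}[p]$, which shows that $[m]\,[l]_{q^m}$ is a unit in $Z_{p,n}$: writing $u = -q[\alpha]_{q^p}[p]$, we have $[m][l]_{q^m} = 1 - u$ with $u$ divisible by $[p]$, so $u^n \equiv 0 \mod [p]^n$ and the geometric series $\sum_{j=0}^{n-1} u^j$ is the inverse of $1-u$ in $Z_{p,n}$. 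Multiplying through gives exactly
\begin{equation*}
\frac{1}{[m]} \equiv [l]_{q^m}\sum_{j=0}^{n-1}\left(-q[\alpha]_{q^p}[p]\right)^j \mod [p]^n,
\end{equation*}
as claimed.

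The first step, then, is to record the two $q$-integer identities $[ab]_q = [a]_q [b]_{q^a}$ and $[m][l]_{q^m} = 1 + q[\alpha]_{q^p}[p]$; the second follows from the first together with the arithmetic relation $ml - p\alpha = 1$ as computed above. The second step is to check that such integers $l, \alpha$ exist: since $p > m \ge 1$ and $p$ is prime (in fact $\gcd(m,p)=1$ suffices), $m$ is invertible modulo $p$, so there is a unique $l$ with $1 \le l \le p-1$ and $ml \equiv 1 \mod p$, and then $\alpha = (ml-1)/p \ge 0$ is forced. The third step is the geometric-series inversion of $1-u$ modulo $[p]^n$, which only uses that $[p]$ generates the relevant ideal and that $u \in ([p])$.

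I do not expect a genuine obstacle here — the lemma is essentially a bookkeeping statement — but the one point requiring a little care is the interplay of the two bases $q$ and $q^m$ (resp. $q^p$) in the symbols $[l]_{q^m}$ and $[\alpha]_{q^p}$: one must apply the composition rule $[ab]_q=[a]_q[b]_{q^a}$ with the correct choice of inner base each time, and keep track that $[\alpha]_{q^p}[p]_q = [\alpha]_{q^p}[p]$ really does equal $[p\alpha]_q$ (again an instance of the same rule, with $a=p$). Once these identifications are in place, the displayed congruence drops out immediately, and invertibility of $[m]$ in $Z_{p,n}$ is a byproduct since $[l]_{q^m}$ is then visibly a unit times the inverse already constructed.
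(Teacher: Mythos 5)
Your proof is correct and follows essentially the same route as the paper: both establish $[m][l]_{q^m}=1+q[\alpha]_{q^p}[p]$ from $ml=1+p\alpha$ and then invert $1-u$ with $u=-q[\alpha]_{q^p}[p]$ by the truncated geometric series modulo $[p]^n$. Your write-up just makes explicit the intermediate steps (the factorization $[ab]_q=[a]_q[b]_{q^a}$ and the existence of $l,\alpha$) that the paper leaves implicit.
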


\begin{proof}
Using the relation $ml=1+p \alpha$, we see that
$[m][l]_{q^{m}}=1+q[\alpha]_{q^{p}}[p]$.
Hence we have
\begin{align*}
[m][l]_{q^{m}}\sum_{j=0}^{n-1}
\left(-q[\alpha]_{q^{p}}[p]\right)^{j}=1-(-q[\alpha]_{q^{p}}[p])^{n} \equiv 1
\quad \mod [p]^{n},
\end{align*}
which completes the proof.
\end{proof}

{}From Lemma \ref{lem:[m]-inverse},
the following definition makes sense.

\begin{definition}\label{def:hatQ-MZV}
For an index $\kk$ and $\bullet\in\{\emptyset,\star\}$, we define
\[ \zeta_{\widehat{\mathcal{Q}}}^\bullet (\kk)= \big( (H_{p-1}^\bullet(\kk;q) \mod{[p]^n})_p\big)_n  \]
and
\[ \overline{\zeta}_{\widehat{\mathcal{Q}}}^\bullet(\kk)=\left(
(\overline{H}_{p-1}^\bullet(\kk;q) \mod{[p]^n})_p\right)_n \]
as elements in $\widehat{\mathcal{Q}}$,
where for an index $\kk=(k_1,\ldots,k_d)$, we set
\begin{align*}
\overline{H}_m(\kk;q)&=\sum_{m\ge m_1>\cdots>m_d>0} \prod_{a=1}^d \frac{q^{m_a}}{[m_a]^{k_a}} \quad \mbox{and}\quad
\overline{H}_m^\star (\kk;q)=\sum_{m\ge m_1\ge \cdots\ge m_d>0} \prod_{a=1}^d \frac{q^{m_a}}{[m_a]^{k_a}}.
\end{align*}
\end{definition}

In the same vein with Remark \ref{rem:QMZV}, we call the above objects \emph{$\widehat{\mathcal{Q}}$-multiple zeta values} (abbreviated by $\widehat{\mathcal{Q}}$-MZVs).
The use of ``bar'' on $\zeta_{\widehat{\mathcal{Q}}}$ seems confusing with the complex conjugate, but it comes into play in the study of relations later.

In what follows, similarly to \S\ref{sec:QtoAS}, we prepare algebra maps $\phi_{\widehat{\mathcal{A}}}:\widehat{\mathcal{Q}}\rightarrow \widehat{\mathcal{A}}$ and  $\phi_{\widehat{\mathcal{S}}}:\widehat{\mathcal{O}}\rightarrow \C[[t]]$, and then, give our main result stating that $\widehat{\mathcal{A}}$-MZVs and $\widehat{\mathcal{S}}$-MZVs are obtained as images of $\widehat{\mathcal{Q}}$-MZVs under these maps.
Since the setting of $\phi_{\widehat{\mathcal{S}}}$ needs more spaces, we do this in the following two subsections and the proof of our main result is postponed to Appendix \ref{app:proof_of_main_result}.

\subsection{From $\widehat{\mathcal{Q}}$-MZV to $\widehat{\mathcal{A}}$-MZV}
The map $\phi_{\widehat{\mathcal{A}}}$ is defined in a similar manner to $\phi_{\mathcal{A}}$ in \S\ref{sec:QtoAS}.
If two polynomials $f(q)$ and $g(q)$ over $\Z_{(p)}$ satisfies $f(q)\equiv g(q)\mod [p]^n$,
then $f(1)\equiv g(1) \mod p^n$.
Therefore, there is the natural projection
\[ Z_{p,n}\longrightarrow \mathbb{Z}_{(p)}/p^{n}\mathbb{Z}_{(p)}
\simeq \mathbb{Z}/p^{n}\mathbb{Z}\]
that sends $q$ to $1$.
This induces the continuous surjective algebra map
\begin{equation*}\label{eq:map_phi}
\phi_{\widehat{\mathcal{A}}}: \widehat{\mathcal{Q}} \longrightarrow \widehat{\mathcal{A}},\quad \big((f_{p,n}(q) \mod [p]^n)_p\big)_n\longmapsto \big((f_{p,n}(1) \mod p^n)_p\big)_n.
\end{equation*}

\begin{theorem}\label{thm:hatA_from_q}
For all index $\kk$ and $\bullet\in\{\emptyset,\star\}$, we have
\[ \phi_{\widehat{\mathcal{A}}} \big( \zeta_{\widehat{\mathcal{Q}}}^\bullet(\kk)\big) =  \phi_{\widehat{\mathcal{A}}} \big(\overline{ \zeta}_{\widehat{\mathcal{Q}}}^\bullet(\kk)\big) = \zeta_{\widehat{\mathcal{A}}}^\bullet(\kk).\]
\end{theorem}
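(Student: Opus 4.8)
The plan is to reduce everything to a single ``local'' computation: for a fixed prime $p$ and an integer $m\in\{1,\dots,p-1\}$, the image of the building block $q^{(k-1)m}/[m]^k$ (or $q^m/[m]^k$ for the bar version) under the map $Z_{p,n}\to\Z/p^n\Z$ sending $q\mapsto1$. Since $\phi_{\widehat{\mathcal{A}}}$ is a continuous $\Q[q]$-algebra homomorphism built componentwise from these projections, once we know it sends $q^{(k-1)m}[m]^{-k}\bmod[p]^n$ to $m^{-k}\bmod p^n$ (and likewise $q^m[m]^{-k}\bmod[p]^n\mapsto m^{-k}\bmod p^n$), we obtain
\[
\phi_{\widehat{\mathcal{A}}}\bigl(\zeta_{\widehat{\mathcal{Q}}}^\bullet(\kk)\bigr)
=\phi_{\widehat{\mathcal{A}}}\bigl(\overline{\zeta}_{\widehat{\mathcal{Q}}}^\bullet(\kk)\bigr)
=\zeta_{\widehat{\mathcal{A}}}^\bullet(\kk)
\]
by applying the map term-by-term to the finite sum defining $H_{p-1}^\bullet(\kk;q)$ or $\overline H_{p-1}^\bullet(\kk;q)$, and then assembling over $n$ and over $p$.

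First I would make precise that the projection $\pi_{p,n}\colon Z_{p,n}\to\Z/p^n\Z$, $q\mapsto1$, is well defined: this is exactly the observation in the paragraph preceding Theorem \ref{thm:hatA_from_q}, namely that $[p]=(1-q^p)/(1-q)$ evaluates to $p$ at $q=1$, so $[p]^n\equiv 0\bmod p^n$ is respected. Next, the key local identity: by Lemma \ref{lem:[m]-inverse} the inverse of $[m]$ in $Z_{p,n}$ is the explicit polynomial $[l]_{q^m}\sum_{j=0}^{n-1}(-q[\alpha]_{q^p}[p])^j$ with $ml-p\alpha=1$. Applying $\pi_{p,n}$ (i.e. setting $q=1$): $[l]_{q^m}\mapsto l$, $[p]\mapsto p$, $[\alpha]_{q^p}\mapsto\alpha$, so $\pi_{p,n}([m]^{-1})=l\sum_{j=0}^{n-1}(-\alpha p)^j=l(1-(-\alpha p)^n)/(1+\alpha p)\equiv l/(ml)=m^{-1}\bmod p^n$, using $1+\alpha p=ml$. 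Hence $\pi_{p,n}\bigl(q^{(k-1)m}[m]^{-k}\bigr)=1\cdot m^{-k}=m^{-k}\bmod p^n$, and identically $\pi_{p,n}\bigl(q^{m}[m]^{-k}\bigr)=m^{-k}\bmod p^n$ since the numerator power of $q$ is killed on setting $q=1$.

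Then I would propagate this through the sums. Fix an index $\kk=(k_1,\dots,k_d)$ and a prime $p>d$ (so that $H_{p-1}^\bullet(\kk;q)$ is a genuine finite sum over $p-1\ge m_1\ge\cdots$); each summand is a product $\prod_a q^{(k_a-1)m_a}[m_a]^{-k_a}$ with all $m_a\in\{1,\dots,p-1\}$, so $\pi_{p,n}$ sends it to $\prod_a m_a^{-k_a}$. Summing gives $\pi_{p,n}\bigl(H_{p-1}^\bullet(\kk;q)\bmod[p]^n\bigr)=H_{p-1}^\bullet(\kk)\bmod p^n$, and the same for the bar version. Since only finitely many primes are excluded (those $p\le d$, plus finitely many where denominators misbehave, all of which are irrelevant in $\mathcal{Q}_n$ and $\mathcal{A}_n$), these agree as elements of $\mathcal{Q}_n\to\mathcal{A}_n$ for every $n$; compatibility with the transition maps $\varphi_n$ is automatic because $\pi_{p,n}$ is compatible with $Z_{p,n+1}\to Z_{p,n}$ and $\Z/p^{n+1}\Z\to\Z/p^n\Z$. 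Taking the projective limit yields the claimed equalities in $\widehat{\mathcal{A}}$. For the star case one may either run the same argument directly with $H_{p-1}^\star$, or invoke the $\Z[1-q]$-linear decomposition \eqref{eq:partial_franction_q} expressing $H_m^\star(\kk;q)$ in terms of non-star sums together with $\phi_{\widehat{\mathcal{A}}}(1-q)=0$ to reduce to the non-star case; I'd note this only in passing.

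The argument is essentially all bookkeeping; the one genuine input is Lemma \ref{lem:[m]-inverse}, which is already proved, so there is no real obstacle. The only point requiring a little care is the interchange ``apply $\pi_{p,n}$ to each summand'' — this is legitimate precisely because, by Lemma \ref{lem:[m]-inverse}, $H_{p-1}^\bullet(\kk;q)$ really is the class of a \emph{polynomial} in $Z_{p,n}$ (a sum of products of the explicit polynomial inverses), so $\pi_{p,n}$, being a ring homomorphism, commutes with the finite sum and product; I would state this explicitly to avoid any appearance of circularity. Everything else — well-definedness of $\phi_{\widehat{\mathcal{A}}}$, continuity, compatibility with $\varphi_n$ — is formal and can be dispatched in a sentence.
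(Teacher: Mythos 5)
Your proposal is correct and follows essentially the same route as the paper: the paper's proof is exactly the one-line reduction to the identity $\phi_{\widehat{\mathcal{A}}}\big(\big(([m]^{-1}\bmod [p]^n)_p\big)_n\big)=\big((m^{-1}\bmod p^n)_p\big)_n$ for $1\le m\le p-1$, which you verify explicitly via Lemma \ref{lem:[m]-inverse} and then propagate through the finite sums. The only difference is that you spell out the bookkeeping (and the explicit inverse computation, which also follows immediately from $\pi_{p,n}$ being a ring homomorphism sending $[m]\mapsto m$) that the paper leaves implicit.
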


\begin{proof}
This is immediate from the identity
\begin{align*}
\phi_{\widehat{\mathcal{A}}}\big(\big(([m]^{-1}\mod [p]^n)_p\big)_n\big)=
\big((m^{-1}\mod p^n)_p\big)_n
\end{align*}
for any $m\in\{1,2,\ldots,p-1\}$.
\end{proof}

\subsection{From $\widehat{\mathcal{Q}}$-MZV to $\widehat{\mathcal{S}}$-MZV}
The map $\phi_{\widehat{\mathcal{S}}}$ is defined to be the composition of two algebra maps $\widehat{lim}$ and $\widehat{ ev }$, as in the same vein as $\phi_{\mathcal{S}}$ in \S\ref{sec:QtoAS}.

First, let us define $\widehat{ ev }$.
For a prime $p$ and $n \ge 1$, set
\begin{align*}
Z_{p, n}^{\rm an}=\mathbb{Q}(\zeta_{p})[[t]]/(t^{n}) \quad \mbox{and}\quad
\mathcal{Q}^{\rm an}_n=\bigg(\prod_{p:{\rm prime}}Z_{p, n}^{\rm an}\bigg)\bigg/\bigg(\bigoplus_{p:{\rm prime}}Z_{p,n}^{\rm an}\bigg).
\end{align*}
We also write $(f_{p,n})_p$ for an element of $\mathcal{Q}^{\rm an}_n$, where $p$ runs over all primes and $f_{p,n}\in Z_{p,n}^{\rm an}$.
We apply the same construction with $\widehat{\mathcal{Q}}$ to
\[ \widehat{\mathcal{Q}}^{\rm an}=\varprojlim_n \mathcal{Q}_n^{\rm an},\]
where the transition map $ \mathcal{Q}_{n+1}^{\rm an}\rightarrow \mathcal{Q}_n^{\rm an}$ is induced by the natural projection $Z_{p, n+1}^{\rm an}\rightarrow Z_{p, n}^{\rm an}$.
The $\Q$-algebra $\widehat{\mathcal{Q}}^{\rm an}$ is complete under the $\boldsymbol{t}$-adic topology, where
\[ \boldsymbol{t}=\big( (t\mod t^n)_p\big)_n \in \widehat{\mathcal{Q}}^{\rm an}.\]

There exists a unique formal power series $q_m(t)\in\Q(\zeta_m)[[t]]$ such that
\begin{equation}\label{eq:qmt}
q_m(0)=e^{\frac{2\pi i}{m}} \quad \mbox{and}\quad [m]_{q_m(t)}=t.
\end{equation}
For example, solving the above equations, one gets
\[ q_m(t)=e^{\frac{2\pi i}{m}}+(1-e^{-\frac{2\pi i}{m}})\frac{t}{m}+O(t^2).\]
Alternatively, the power series $q_m(t)$ can be viewed as an inverse function
of $t=[m]_{q}$, which is holomorphic in a neighborhood of $t=0$
because $dt/dq\neq 0$ at $q=e^{2\pi i/m}$.
See Remark \ref{rem:taylor_exp_qmt} for the explicit Taylor expansion.
For each prime $p$, we use the power series $q_p(t)$ to get the $\mathbb{Z}_{(p)}$-homomorphism
\begin{equation*}\label{eq:injective_hom}
Z_{p, n} \longrightarrow Z_{p, n}^{\rm an}, \qquad
f(q) \,\, \hbox{mod}\,\, [p]^{n} \mapsto
f(q_{p}(t))	\,\, \hbox{mod}\,\, t^{n}.
\end{equation*}
It induces the continuous $\mathbb{Q}$-algebra map
\[\widehat{ ev }: \widehat{\mathcal{Q}} \longrightarrow \widehat{\mathcal{Q}}^{\rm an},\quad \big((f_{p,n}(q)\mod [p]^n)_p\big)_n\longmapsto \big((f_{p,n}(q_p(t)) \mod t^n)_p\big)_n.\]

Secondly, we introduce $\widehat{lim}$.
Let $\mathcal{O}_{n}^{\rm an}$ be the $\mathbb{Q}$-subalgebra of $\prod_{p}Z_{p, n}^{\rm an}$ given by
\begin{align*}
\mathcal{O}_{n}^{\rm an}=\left\{
\big( \sum_{l=0}^{n-1}z_{p,l}\,t^{l} \, \hbox{mod}\, t^{n}\big)_{p} \in\prod_{p}Z_{p, n}^{\rm an}
\, \bigg| \,
\lim_{p \to \infty}z_{p, l} \,\, \hbox{converges for all} \,\, 0\le l \le n-1
\right\}.
\end{align*}
Note that
$\bigoplus_{p}Z_{p, n}^{\rm an} \subset \mathcal{O}_{n}^{\rm an} \subset \prod_{p}Z_{p, n}^{\rm an}$.
We define the $\mathbb{Q}$-subalgebra $\widehat{\mathcal{O}}^{\rm an}$ of $\widehat{\mathcal{Q}}^{\rm an}$ by
\begin{align*}
\widehat{\mathcal{O}}^{\rm an}=\lim_{\substack{\longleftarrow \\ n}}
\bigg(\mathcal{O}_{n}^{\rm an}\bigg/\bigoplus_{p}Z_{p,n}^{\rm an}\bigg).
\end{align*}
The transition map $ \mathcal{O}_{n+1}^{\rm an}/\bigoplus_{p}Z_{p,n+1}^{\rm an}\rightarrow \mathcal{O}_n^{\rm an}/\bigoplus_{p}Z_{p,n}^{\rm an}$ is also induced by the natural projection $Z_{p,n+1}^{\rm an}\rightarrow Z_{p,n}^{\rm an}$.
It can be shown that $\widehat{\mathcal{O}}^{\rm an}$ is a closed subset of $\widehat{\mathcal{Q}}^{\rm an}$. 
We note that any element $z$ of $\widehat{\mathcal{O}}^{\rm an}$ is written in the form
\begin{align*}
z=\big((\textstyle{\sum_{l=0}^{n-1}z_{p, l}^{(n)}\, t^{l} \,\, \hbox{mod} \,\, t^{n}})_{p}
\,\, \hbox{mod} \oplus_{p}Z_{p,n}^{\mathrm{an}}\big)_{n}
\end{align*}
with convergent sequences $\{z_{p, l}^{(n)}\}_{p}$ satisfying the condition that
there exists an increasing sequence $\{p_{n}\}_{n\ge1}$ of primes such that,
for all $n\ge1$ and primes $p \ge p_{n}$,
the equality $z_{p,l}^{(n+1)}=z_{p, l}^{(n)}$ holds for each $l\in\{0,1,\ldots,n-1\}$.
Taking $p \to \infty$ we see that
$\displaystyle\lim_{p \to \infty}z_{p,l}^{(n+1)}=\displaystyle\lim_{p \to \infty}z_{p, l}^{(n)}$
for all $0\le l<n$.
Set $z_{l}=\displaystyle\lim_{p \to \infty}z_{p,l}^{(n)}$ for $l \ge 0$,
which is independent on the choice of $n$ greater than $l$.
Thus we obtain the $\Q$-algebra map
\begin{align*}
\widehat{lim}: \widehat{\mathcal{O}}^{\rm an} \longrightarrow \mathbb{C}[[t]], \quad
z \mapsto \sum_{l\ge0}z_{l}\, t^{l}.
\end{align*}
It is continuous with respect to the $t$-adic topology on $\mathbb{C}[[t]]$.

Finally, we define $\phi_{\widehat{\mathcal{S}}}$.
Let us denote by $\widehat{\mathcal{O}}$ the closed $\Q$-subalgebra of $\widehat{\mathcal{Q}}$ given by
\begin{equation*}\label{eq:O}
\widehat{\mathcal{O}}=\{z\in \widehat{\mathcal{Q}}\mid  ev (z)\in \widehat{\mathcal{O}}^{\rm an}\}
\, \big(= ev ^{-1}\big(\widehat{\mathcal{O}}^{\rm an}\big)\big).
\end{equation*}
The map $\phi_{\widehat{\mathcal{S}}}$ is then defined by
\begin{align*}
\phi_{\widehat{\mathcal{S}}}=\widehat{lim} \circ \, \widehat{ ev }:
\widehat{\mathcal{O}} \longrightarrow \mathbb{C}[[t]],
\end{align*}
which is a continuous $\mathbb{Q}$-algebra map.
It is explicitly described as follows.
Let $f=\big((f_{p,n}(q)\,\, \hbox{mod}\, [p]^{n})_p\big)_n$ be
an element of $\widehat{\mathcal{O}}$.
Set $f_{p,n}(q_p(t))=\sum_{l=0}^{n-1}z_{p,l}^{(n)}t^l+O(t^{n})$.
Then one has $\phi_{\widehat{\mathcal{S}}}\big( f\big)= \sum_{l\ge0} z_l t^l$ with $z_l=\displaystyle\lim_{p\rightarrow \infty}z_{p,l}^{(n)} \,\, (n>l)$.

\begin{theorem}\label{thm:hatS_from_q}
For any index $\kk$ and $\bullet\in\{\emptyset,\star\}$, we have
$\zeta_{\widehat{\mathcal{Q}}}^\bullet (\kk) \in \widehat{\mathcal{O}}$ and
\begin{equation}\label{eq:hatS_from_q}
\phi_{\widehat{\mathcal{S}}} \big( \zeta_{\widehat{\mathcal{Q}}}^\bullet (\kk)\big) =\widehat{\xi}^\bullet (\kk) \,
\end{equation}
where $\widehat{\xi}(\kk),\widehat{\xi}^\star(\kk)\in \mathcal{Z}[\pi i][[t]]$ are given by
\[\widehat{\xi}(\kk)=\sum_{a=0}^{\dep(\kk)} (-1)^{\wt(\kk_a)}
\sum_{\bl\in \Z_{\ge0}^a} t^{\wt(\bl)}\, b\binom{\kk_a}{\bl} \zeta^\ast \left(
\overline{\kk_a+\bl};\frac{\pi i}{2}
\right)
\zeta^\ast \left(\kk^a;-\frac{\pi i}{2}\right)
\]
and
\[
\widehat{\xi}^{\star}(k_{1}, \ldots , k_{d})=\sum_{\substack{\footnotesize \square \
\hbox{is either a comma `,'}\\ \footnotesize \mbox{or a plus `$+$'}}}
\widehat{\xi}(k_{1} \square \cdots \square k_{d}).
\]
Also, $\overline{\zeta}_{\widehat{\mathcal{Q}}}(\kk)$
and $\overline{\zeta}_{\widehat{\mathcal{Q}}}^{\star}(\kk)$ belong to $\widehat{\mathcal{O}}$ and
it holds that
\begin{equation}\label{eq:hatS_from_q2}
\phi_{\widehat{\mathcal{S}}}\big( \overline{\zeta}_{\widehat{\mathcal{Q}}}(\kk)\big)=
\overline{\widehat{\xi}(\kk)} \quad \mbox{and} \quad \phi_{\widehat{\mathcal{S}}}\big( \overline{\zeta}^\star_{\widehat{\mathcal{Q}}}(\kk)\big)=
\overline{\widehat{\xi}^{\star}(\kk)},
\end{equation}
where the bar on the right sides means taking the complex conjugate of
each coefficient in a formal power series over $\C$.
In particular, for any index $\kk$ and $\bullet\in\{\emptyset,\star\}$, we have
\begin{align*}
\phi_{\widehat{\mathcal{S}}} \big( \zeta_{\widehat{\mathcal{Q}}}^\bullet(\kk)\big)\equiv \phi_{\widehat{\mathcal{S}}} \big( \overline{\zeta}_{\widehat{\mathcal{Q}}}^\bullet(\kk)\big) \equiv \zeta_{\widehat{\mathcal{S}}}^\bullet (\kk)   \mod \pi i\mathcal{Z}[\pi i][[t]].
\end{align*}
\end{theorem}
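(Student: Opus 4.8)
The plan is to prove the non-star identity $\phi_{\widehat{\mathcal{S}}}\big(\zeta_{\widehat{\mathcal{Q}}}(\kk)\big)=\widehat{\xi}(\kk)$ first; the star version, the ``bar'' versions, and the congruence modulo $\pi i$ then follow formally. Since $\phi_{\widehat{\mathcal{S}}}=\widehat{lim}\circ\widehat{ev}$, this reduces to showing that every coefficient in $t$ of $H_{p-1}(\kk;q_p(t))\in\Q(\zeta_p)[[t]]$ converges as $p\to\infty$ (which is exactly the statement $\zeta_{\widehat{\mathcal{Q}}}(\kk)\in\widehat{\mathcal{O}}$) and that $\lim_{p\to\infty}H_{p-1}(\kk;q_p(t))$, computed coefficientwise, equals $\widehat{\xi}(\kk)$. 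Only the behaviour for large primes matters, so ``prime'' will mean ``sufficiently large odd prime''.

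The backbone is a reflection identity, which I would first record in $Z_{p,n}=\Z_{(p)}[q]/([p]^n)$, where $[p]_q$ is nilpotent: from $q^p=1-(1-q)[p]_q$ one gets, for $1\le j\le p-1$,
\[
\frac{q^{(k-1)(p-j)}}{[p-j]_q^{k}}=(-1)^{k}\,\frac{q^{j}\bigl(1-(1-q)[p]_q\bigr)^{k-1}}{\bigl([j]_q-[p]_q\bigr)^{k}}.
\]
Splitting $\{\,p-1\ge m_1>\cdots>m_d>0\,\}$ by the unique $a\in\{0,\dots,d\}$ with $m_a\ge\tfrac{p+1}{2}>m_{a+1}$ (conventions $m_0=p$, $m_{d+1}=0$) and substituting $m_j\mapsto p-m_j$ for $j\le a$ yields, modulo $[p]^n$,
\[
H_{p-1}(\kk;q)\equiv\sum_{a=0}^{d}(-1)^{\wt(\kk_a)}\,P_a\,Q_a,
\]
with $Q_a=H_{\frac{p-1}{2}}(\kk^a;q)$ and $P_a=\sum_{1\le m_1<\cdots<m_a\le\frac{p-1}{2}}\prod_{j=1}^{a}q^{m_j}\bigl(1-(1-q)[p]_q\bigr)^{k_j-1}\bigl([m_j]_q-[p]_q\bigr)^{-k_j}$, the half-sum attached to the reversed index $\overline{\kk_a}$ in ``$\overline{H}$-convention'' (numerators $q^{m_j}$); the two sets of summation variables range independently over $\{1,\dots,\tfrac{p-1}{2}\}$, since the one constraint linking them, $m_a>m_{a+1}$, has become vacuous. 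Applying $\widehat{ev}$ replaces $[p]_q$ by $t$, and expanding $([m_j]_q-t)^{-k_j}=\sum_{l\ge0}b\binom{k_j}{l}\,t^{l}\,[m_j]_q^{-k_j-l}$ produces precisely the sum over $\bl=(l_1,\dots,l_a)$ with weights $t^{\wt(\bl)}b\binom{\kk_a}{\bl}$ occurring in $\widehat{\xi}(\kk)$; the factors $\bigl(1-(1-q)[p]_q\bigr)^{k_j-1}$ and the difference between $[m_j]_{q_p(t)}$ and $[m_j]_{q_p(0)}=[m_j]_{\zeta_p}$ contribute only terms whose $t$-coefficients are $O(1/p)$, because the $n$-th $t$-coefficient of $q_p(t)-\zeta_p$ is $O(p^{-n-1})$, which beats the at-most-$\log$-power growth of the $q$-harmonic sums below.

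The delicate point, and the main obstacle, is that the half-sums $P_a,Q_a$ need not lie in $\widehat{\mathcal{O}}$ individually: already $H_{\frac{p-1}{2}}(1;\zeta_p)$ diverges like $\log p$. I would handle this as in \cite{BTT18}. The ``$H$-type'' truncated $q$-sums appearing in $Q_a$ and the ``$\overline{H}$-type'' ones appearing in $P_a$ each satisfy the stuffle product up to a $(1-q)$-correction, so the regularization theorem lets one write each of them as a polynomial, whose coefficients converge as $p\to\infty$ with limits governed by $\zeta^\ast(\,\cdot\,;T)$, in a single divergent generator---$L_p:=H_{\frac{p-1}{2}}(1;\zeta_p)$ for the $Q_a$'s and $\overline{L}_p:=\overline{H}_{\frac{p-1}{2}}(1;\zeta_p)$ for the $P_a$'s; concretely $H_{\frac{p-1}{2}}(\mathbf{j};\zeta_p)=\zeta^\ast(\mathbf{j};L_p)+o(1)$ and $\overline{H}_{\frac{p-1}{2}}(\mathbf{j};\zeta_p)=\zeta^\ast(\mathbf{j};\overline{L}_p)+o(1)$ for every index $\mathbf{j}$. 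A short computation gives $L_p=L_p^{\mathrm{div}}-\tfrac{\pi i}{2}+o(1)$ with $L_p^{\mathrm{div}}$ real and divergent, and since $\overline{L}_p-L_p=\tfrac{p-1}{2}(\zeta_p-1)\to\pi i$ one also has $\overline{L}_p=L_p^{\mathrm{div}}+\tfrac{\pi i}{2}+o(1)$ with the \emph{same} divergent part. Plugging these into $\sum_a(-1)^{\wt(\kk_a)}P_aQ_a$ and expressing everything in terms of $L_p$ (using $\overline{L}_p=L_p+\pi i+o(1)$ and discarding the $O(1/p)$ errors, which is legitimate because $L_p^{\mathrm{div}}$ grows only polynomially in $\log p$), the result becomes a polynomial in $L_p$ that must in fact be constant---this is the two-variable refinement of the $T$-independence of $\zeta^\ast_{\widehat{\mathcal{S}}}(\kk)$, namely that $\sum_a(-1)^{\wt(\kk_a)}\big(\sum_\bl t^{\wt(\bl)}b\binom{\kk_a}{\bl}\zeta^\ast(\overline{\kk_a+\bl};X)\big)\zeta^\ast(\kk^a;Y)$ depends only on $Y-X$ (cf.\ \cite[Proposition 2.3]{OnoSekiYamamoto})---and its value, read off at $L_p=-\tfrac{\pi i}{2}$ so that $L_p+\pi i=\tfrac{\pi i}{2}$, is exactly $\widehat{\xi}(\kk)$. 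This simultaneously gives convergence (hence $\zeta_{\widehat{\mathcal{Q}}}(\kk)\in\widehat{\mathcal{O}}$) and the value $\widehat{\xi}(\kk)$.

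For $\overline{\zeta}_{\widehat{\mathcal{Q}}}(\kk)$ the same argument applies with $H$ replaced by $\overline{H}$; reflection now interchanges the $H$- and $\overline{H}$-conventions, so $L_p$ and $\overline{L}_p$ swap their finite parts $-\tfrac{\pi i}{2}\leftrightarrow+\tfrac{\pi i}{2}$, and since $\overline{\pi i/2}=-\pi i/2$ this yields $\phi_{\widehat{\mathcal{S}}}\big(\overline{\zeta}_{\widehat{\mathcal{Q}}}(\kk)\big)=\overline{\widehat{\xi}(\kk)}$. The star versions follow from the non-star ones: by \eqref{eq:partial_franction_q}, each of $\zeta^\star_{\widehat{\mathcal{Q}}}(\kk)$ and $\overline{\zeta}^\star_{\widehat{\mathcal{Q}}}(\kk)$ is a $\Z[1-q]$-linear combination of non-star $\widehat{\mathcal{Q}}$-MZVs, all lying in the subalgebra $\widehat{\mathcal{O}}$, and since $\phi_{\widehat{\mathcal{S}}}$ annihilates $1-q$, only the ``pure merging'' terms survive, which are precisely the comma/plus sums defining $\widehat{\xi}^\star$ and $\overline{\widehat{\xi}^\star(\kk)}$. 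Finally, reducing modulo $\pi i\mathcal{Z}[\pi i][[t]]$: there $\zeta^\ast(\,\cdot\,;\pm\tfrac{\pi i}{2})\equiv\zeta^\ast(\,\cdot\,;0)$, and $\zeta^\ast_{\widehat{\mathcal{S}}}(\kk)$ is $T$-independent, so both $\widehat{\xi}(\kk)$ and $\overline{\widehat{\xi}(\kk)}$ reduce to $\zeta^\ast_{\widehat{\mathcal{S}}}(\kk)=\zeta_{\widehat{\mathcal{S}}}(\kk)$; summing over the comma/plus choices gives the same for the star versions.
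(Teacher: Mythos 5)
Your overall architecture is sound and, in fact, takes a genuinely different route from the paper's. The paper never performs the reflection inside $Z_{p,n}$; instead it first computes the Taylor coefficients $\alpha_l(\kk;m)$ of $H_{m-1}(\kk;q_m(t))$ analytically, via a residue formula for the coefficients of $f(q_m(t))$ (Proposition \ref{prop:expansion}) combined with an explicit formula for $\theta_q^l\bigl(q^{(k-1)m}/(1-q^m)^k\bigr)$ (Lemma \ref{lem:theta-[m]}); the binomial weights $b\binom{\kk_a}{\bl}$ emerge from the theta-operator computation, and only afterwards is the sum split at $m/2$ and reversed (the reversed half appearing as a complex conjugate $\overline{A_m^+}$, whose parameter is $\overline{\gamma_m}=\log(m/\pi)+\gamma+\pi i/2$ --- the analogue of your $\overline{L}_p$). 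Your route does the algebra first: reflection modulo $[p]^n$ and the geometric expansion of $([m]-[p])^{-k}$ produce the $t^{\wt(\bl)}b\binom{\kk_a}{\bl}$ weights immediately, reducing all higher $t$-coefficients to $t^0$-asymptotics of half-sums with heavier indices. Both routes then rest on the same three inputs, all of which you correctly identify: the $O((\log m)^d)$ bound on the weighted root-of-unity sums (Lemma \ref{lem:B-log}), the asymptotics of half-sums in terms of $\zeta^\ast(\,\cdot\,;\gamma_m)$ from \cite{BTT18}, and the two-variable $T$-independence (Lemma \ref{lem:indep_T}, proved in the paper via the antipode/generating-series argument of Propositions \ref{pror:generating_function}--\ref{prop:xi}; your ``depends only on $Y-X$'' is exactly Proposition \ref{prop:xi}, not \cite[Prop.~2.3]{OnoSekiYamamoto}, which is only the diagonal case).

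Two points need repair. First, your displayed reflection identity is not correct in $Z_{p,n}$: one has $[p-j]=[p]-q^{p-j}[j]$, so the exact statement is $q^{(k-1)(p-j)}/[p-j]^k=(-1)^k q^{j-p}\bigl([j]-q^{j-p}[p]\bigr)^{-k}$ (equivalently Lemma \ref{prop:[p]-expansions}), whose expansion carries numerators $q^{(l+1)m}/[m]^{k+l}$ and extra powers of $q^{-\pp}$. Your version agrees with this only modulo factors of $q^{\pm\pp}$ and $(1-q)$-multiples; these happen to be annihilated or sent to $1$ by $\phi_{\widehat{\mathcal{S}}}$, so the slip does not affect the limit, but as stated the identity would give wrong answers already in $\mathcal{Q}_2$. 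Second, and more substantively, the one-sentence dismissal of the implicit $t$-dependence --- replacing $[m_j]_{q_p(t)}$ by $[m_j]_{\zeta_p}$ at cost $O(1/p)$ per coefficient --- is where most of the actual work in Appendix \ref{app:proof_of_main_result} lives. The naive bound fails termwise: the $t$-derivative of $[m]_{q_p(t)}$ at $t=0$ is of size $O(m^2/p^2)$, which is $O(1)$ for $m\sim p$, so one must track the weights $(m_a/m)^{l_a}$ inside the sums $Z_{m-1}(\bl;\kk)$ and invoke the logarithmic bound to see that the total contribution of all correction terms is $O((\log m)^J/m)$. Your instinct (``beats the at-most-log-power growth'') is right, but this step needs the full bookkeeping of Proposition \ref{prop:expansion}, Lemma \ref{lem:SNL} and Lemma \ref{lem:B-log} rather than a coefficientwise estimate on $q_p(t)-\zeta_p$. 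The star, bar and mod-$\pi i$ reductions at the end are handled exactly as in the paper.
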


Our proof of Theorem \ref{thm:hatS_from_q} can be found in Appendix \ref{app:proof_of_main_result}.

\begin{remark}
Similarly to the map $ ev $, one can prove the injectivity of $\widehat{ev}$ as follows.
For a rational function $f(q)$ in $q$ over $\Z_{(p)}$ whose denominator is coprime to $[p]$, it suffices to show that if $f(q_p(t))\equiv 0 \mod t^n$, then $f(q)\equiv 0 \mod [p]^n$.
Since $q_p(t)-e^{2\pi i/p}\equiv 0\mod t$, we have $f(q_p(t))/(q_p(t)-e^{2\pi i/p})^n \in \Q(\zeta_p)[[t]]$.
Hence $\frac{d^{n-1}}{dt^{n-1}}f(q_p(t))\equiv 0\mod t$, which shows that $e^{2\pi i/p}$ is zero of $f(q)$ of multiplicity $n$.
Notice that the congruence in question is stable under the action of ${\rm Gal}(\Q(\zeta_p)/\mathbb{Q})$.
Thus, all primitive $p$-th roots of unity are zero of $f(q)$ of multiplicity $n$.
Since $[p]=\prod_{a=1}^{p-1}(q-e^{2\pi ia/p})$, we conclude $f(q)\equiv 0 \mod{[p]^n}$.
\end{remark}

\subsection{Application}
Let us illustrate an application to the study of relations of $\widehat{\mathcal{A}}$-MZVs and $\widehat{\mathcal{S}}$-MZVs.
We extend Andrews' relation \eqref{eq:cong_wt1} to $\widehat{\mathcal{Q}}$-MZVs and deduce corresponding relations for $\widehat{\mathcal{A}}$-MZVs and $\widehat{\mathcal{S}}$-MZVs. 

An extension of Andrews' relation \eqref{eq:cong_wt1} is as follows.
\begin{proposition}\label{prop:wt1_hatQ}
We have
\[\zeta_{\widehat{\mathcal{Q}}}(1)-\frac{\pp-1}{2}(1-q) +\frac12 \sum_{l\ge1}[\pp]^l \overline{\zeta}_{\widehat{\mathcal{Q}}}(1+l) =0.\]
\end{proposition}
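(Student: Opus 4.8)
The plan is to peel the claimed identity apart prime by prime and level by level. Since $[\pp]$ is topologically nilpotent in $\widehat{\mathcal{Q}}$, the series $\sum_{l\ge1}[\pp]^l\,\overline{\zeta}_{\widehat{\mathcal{Q}}}(1+l)$ converges, and under the projection $\widehat{\mathcal{Q}}\to\mathcal{Q}_n\cong\widehat{\mathcal{Q}}/[\pp]^n\widehat{\mathcal{Q}}$ it goes to the finite truncation $\sum_{l=1}^{n-1}[\pp]^l\,\overline{\zeta}_{\widehat{\mathcal{Q}}}(1+l)$. An element of $\mathcal{Q}_n$ vanishes as soon as its $p$-components vanish for all but finitely many $p$, so it suffices to prove, in $Z_{p,n}$ for every odd prime $p$ and every $n\ge1$, the congruence
\[ H_{p-1}(1;q)\equiv\frac{p-1}{2}(1-q)-\frac12\sum_{l=1}^{n-1}[p]^l\,\overline{H}_{p-1}(1+l;q)\mod [p]^n,\]
where $H_{p-1}(1;q)=\sum_{m=1}^{p-1}[m]^{-1}$ and $\overline{H}_{p-1}(1+l;q)=\sum_{m=1}^{p-1}q^m[m]^{-1-l}$, the $q$-integers $[m]$ ($1\le m\le p-1$) being invertible in $Z_{p,n}$ by Lemma \ref{lem:[m]-inverse}.

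The engine of the computation is the reflection $m\mapsto p-m$. From $q^p=1-(1-q)[p]$ one gets the exact identity $[p-m]=q^{-m}\big([p]-[m]\big)$ in $\Z_{(p)}[q]$ for $1\le m\le p-1$. Since $[m]$ is a unit and $[p]$ is nilpotent in $Z_{p,n}$, inverting and expanding the geometric series gives
\[ \frac{1}{[p-m]}\equiv-\sum_{j=0}^{n-1}\frac{q^m[p]^j}{[m]^{j+1}}\mod [p]^n.\]
Summing over $m=1,\ldots,p-1$ and using that $m\mapsto p-m$ permutes $\{1,\ldots,p-1\}$ yields
\[ H_{p-1}(1;q)\equiv-\overline{H}_{p-1}(1;q)-\sum_{j=1}^{n-1}[p]^j\,\overline{H}_{p-1}(1+j;q)\mod [p]^n.\]

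To finish I would remove the unwanted term $\overline{H}_{p-1}(1;q)$ using the one-line identity $q^m/[m]=1/[m]-(1-q)$, which gives $\overline{H}_{p-1}(1;q)=H_{p-1}(1;q)-(p-1)(1-q)$; substituting into the last display and solving the resulting linear relation for $H_{p-1}(1;q)$ (dividing by $2$, legitimate over $\Z_{(p)}$ since $p$ is odd) produces exactly the displayed congruence, hence the Proposition. Taking $n=1$ recovers Andrews' relation \eqref{eq:cong_wt1}. I do not expect a genuine obstacle here: the only points needing care are the reduction of the $[\pp]$-adic identity to the truncated congruences (immediate from $\mathcal{Q}_n\cong\widehat{\mathcal{Q}}/[\pp]^n\widehat{\mathcal{Q}}$) and the manipulation of the reflection formula inside $Z_{p,n}$, both of which are routine given the setup of \S4.1.
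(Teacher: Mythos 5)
Your proof is correct. The reduction from $\widehat{\mathcal{Q}}$ to the truncated congruences in $Z_{p,n}$ is exactly as you describe, the identity $[p-m]=q^{-m}([p]-[m])$ is valid (and $q$ is invertible in $Z_{p,n}$ because $q^{p}=1-(1-q)[p]$ is a unit), the geometric expansion of $1/[p-m]$ is legitimate since $[m]$ is a unit and $[p]$ is nilpotent in $Z_{p,n}$, and the elimination of $\overline{H}_{p-1}(1;q)$ via $q^{m}/[m]=1/[m]-(1-q)$ together with division by $2$ (harmless since finitely many primes may be discarded) yields precisely the stated congruence.

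Your route differs from the paper's in how the term $\frac{p-1}{2}(1-q)$ is produced. The paper quotes, as a black box, the closed rational-function identity of Shi--Pan,
\[
H_{p-1}(1;q)-\frac{p-1}{2}(1-q)=-\frac12\,[p]\sum_{m=1}^{p-1}\frac{q^{m}}{[m]}\bigl([m]-[p]\bigr)^{-1},
\]
and then only performs the geometric expansion of $([m]-[p])^{-1}$ modulo $[p]^{n}$. You instead rederive everything from scratch: the reflection $m\mapsto p-m$ gives $H_{p-1}(1;q)\equiv-\overline{H}_{p-1}(1;q)-\sum_{j\ge1}[p]^{j}\overline{H}_{p-1}(1+j;q)$, and the symmetrization (folding the sum onto itself and solving a linear equation in $H_{p-1}(1;q)$) supplies the constant $\frac{p-1}{2}(1-q)$. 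The two arguments share the same algebraic engine --- the identity $[p-m]=q^{-m}([p]-[m])$ underlies the Shi--Pan formula as well, and the same reflection-plus-expansion device appears in the paper's Lemma \ref{prop:[p]-expansions} and the proof of Theorem \ref{thm:q-reversal} --- but yours has the virtue of being self-contained, at the cost of an extra elementary manipulation, while the paper's is shorter by outsourcing the exact identity to the literature.
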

\begin{proof}
As a rational function in $q$, it is shown in the proof of Theorem 1 in \cite{ShiPan07} that we have the identities
\begin{align*}
H_{p-1}(1;q)-\frac{p-1}{2}(1-q) &=\frac{1-q}{2}(1-q^p)\sum_{m=1}^{p-1}\frac{q^m}{(1-q^m)(q^m-q^p)}\\
&=-\frac12 [p] \sum_{m=1}^{p-1}\frac{q^m}{[m]}\left([m]-[p]\right)^{-1}.
\end{align*}
Using
\begin{equation*}
\frac{1}{[m]-[p]}=\sum_{l=0}^{n-1}\frac{1}{[m]^{l+1}}[p]^{l}\mod [p]^n,
\label{eq:m-p-inv}
\end{equation*}
we obtain the desired formula.
\end{proof}

To apply the map $\phi_{\widehat{\mathcal{F}}}$, we note
\begin{align*}
\phi_{\widehat{\mathcal{F}}}( [\pp])=\Lambda
\end{align*}
holds for $\mathcal{F}=\mathcal{A}$ and $\mathcal{S}$,
where $\Lambda$ is given by
\begin{align}
\Lambda=\left\{
\begin{array}{ll}
\boldsymbol{p} & (\mathcal{F}=\mathcal{A}), \\
t & (\mathcal{F}=\mathcal{S}).
\end{array}
\right.
\label{eq:def-Lambda}
\end{align}
Using Proposition \ref{prop:1-q} below, for $\mathcal{F}\in\{\mathcal{A},\mathcal{S}\}$, we have
\begin{align*}
\phi_{\widehat{\mathcal{F}}}(q)=1
\label{eq:imageof1-q}
\end{align*}
and
\[ \phi_{\widehat{\mathcal{S}}} (\pp (1-q)) = -2\pi i, \quad \phi_{\widehat{\mathcal{A}}} (\pp (1-q)) = 0.\]
Now, applying $\phi_{\widehat{\mathcal{S}}}$ to Proposition \ref{prop:wt1_hatQ}, we obtain
\[\widehat{\xi}(1)+\pi i + \frac12 \sum_{l\ge1} t^l \overline{\widehat{\xi}(1+l)}=0,\]
which gives
\[\zeta_{\widehat{\mathcal{S}}}(1)+ \sum_{l\ge 0}t^l \, \zeta_{\widehat{\mathcal{S}}}(1+l)=0.\]
On the other hand, applying $\phi_{\widehat{\mathcal{A}}}$ to Proposition \ref{prop:wt1_hatQ}, we get \eqref{eq:wt1}, which is the same shape with the above. 

\begin{remark}
One of key ingredients of the study of relationships between $\zeta_{\widehat{\mathcal{A}}}(\kk)$ and $\zeta_{\widehat{\mathcal{S}}}(\kk)$ is the formula
\[H_{p-1}(\kk)=\sum_{a=0}^{\dep(\kk)} (-1)^{\wt(\kk_a)}  \sum_{\bl\in \Z_{\ge0}^a}  p^{\wt(\bl)}\, b\binom{\kk_a}{\bl}\zeta^{\rm De}_p (\overline{\kk_a+\bl}) \, \zeta^{\rm De}_p(\kk^a),\]
expressing the multiple harmonic sum $H_{p-1}(\kk)$ in terms of Deligne's $p$-adic multiple zeta values (see \cite[Definition 2.7]{Furusho07} for the definition of $ \zeta^{\rm De}_p(\kk)$, where they use the opposite convention, namely, our $\zeta^{\rm De}_p(\kk)$ corresponds to their $\zeta^{\rm De}_p(\overline{\kk})$).
Remark that the above formula was obtained by Akagi-Hirose-Yasuda \cite{AHY} and first proved by Jarossay \cite{Jarossay18}.
It is of interest to find a $q$-analogue of the above formula.
\end{remark}

\section{Reversal, Duality and Cyclic sum formulas for $\widehat{\mathcal{Q}}$-MZVs}\label{sec:rel}
In this section, we will see that reversal, duality and cyclic sum formulas for both $\widehat{\mathcal{A}}$-MZVs and $\widehat{\mathcal{S}}$-MZVs can be extended to $\widehat{\mathcal{Q}}$-MZVs.
Proofs use well-known techniques in $q$-analogues, so are postponed to Appendix \ref{app:proofs_of_relations}.

\subsection{Reversal formula}

Since $q^{p}=1-(1-q)[p]$, for all primes $p$ and $n\ge1$, we have $(q^{p})^{-1}\equiv \sum_{j=0}^{n-1}((1-q)[p])^{j}\mod [p]^n$.
Hence, letting
\begin{align*}
q^{\pm \pp}=(((q^{p})^{\pm 1} \mod [p]^{n})_{p})_{n} \in \widehat{\mathcal{Q}},
\end{align*}
we obtain
\begin{align*}
q^{\pp}=1-(1-q)[\pp], \qquad
q^{-\pp}=\sum_{l \ge 0}((1-q)[\pp])^{l}.
\end{align*}

\begin{theorem}\label{thm:q-reversal}
For any index $\kk$ and $\bullet\in\{\emptyset,\star\}$, it holds that
\begin{equation*}
\begin{aligned}
\overline{\zeta}_{\widehat{\mathcal{Q}}}^\bullet(\kk)
&=(-q^{-\pp})^{\wt(\kk)}(q^{\pp})^{\dep(\kk)}
\sum_{\bl \in \mathbb{Z}_{\ge 0}^{\dep(\kk)}}(q^{-\pp}[\pp])^{\wt(\bl)}\,
b\binom{\kk}{\bl}
\zeta_{\widehat{\mathcal{Q}}}^\bullet(\overline{\kk+\bl}).
\end{aligned}
\end{equation*}
\end{theorem}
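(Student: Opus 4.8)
The plan is to prove the stated identity first as an equality of elements of $Z_{p,n}=\Z_{(p)}[q]/([p]^n)$, valid for every prime $p$ and every $n\ge1$, and then to read it off in the projective limit $\widehat{\mathcal{Q}}$. Two preliminary remarks make this legitimate: modulo $[p]^n$ one has $[p]^{\wt(\bl)}\equiv 0$ as soon as $\wt(\bl)\ge n$, so the infinite sum over $\bl$ on the right-hand side is a finite sum in $Z_{p,n}$ and the corresponding series converges $[\pp]$-adically in $\widehat{\mathcal{Q}}$; and it suffices to treat $\bullet=\emptyset$, since for $\bullet=\star$ the only change is that the summation chains become non-strict, to which the substitution and expansions below are insensitive.

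First I would reverse the order of summation: in $\overline{H}_{p-1}(\kk;q)$ with $\kk=(k_1,\dots,k_d)$, apply $m_a\mapsto p-m_a$ for each $a$. This carries the range $p-1\ge m_1>\cdots>m_d>0$ onto the range with all inequalities reversed; renaming the new variables in decreasing order as $M_1>\cdots>M_d$ and setting $(\kappa_1,\dots,\kappa_d)=\overline{\kk}$, one obtains
\[
\overline{H}_{p-1}(\kk;q)=\sum_{p-1\ge M_1>\cdots>M_d>0}\ \prod_{j=1}^{d}\frac{q^{\,p-M_j}}{[p-M_j]^{\kappa_j}}.
\]
Next I would expand each factor. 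For $1\le M\le p-1$ both $q$ (since $q^p=1-(1-q)[p]\equiv 1\bmod[p]$) and $[M]$ (by Lemma~\ref{lem:[m]-inverse}) are units in $Z_{p,n}$, and from $[p]=[p-M]+q^{\,p-M}[M]$, i.e.\ $[p-M]=-q^{\,p-M}[M]\bigl(1-[p]\,q^{-(p-M)}[M]^{-1}\bigr)$, expanding the negative binomial power $[p-M]^{-\kappa}$ as a $[p]$-adically convergent series yields, after a short computation,
\[
\frac{q^{\,p-M}}{[p-M]^{\kappa}}=(-1)^{\kappa}\sum_{l\ge 0}\binom{\kappa+l-1}{l}[p]^{\,l}\,q^{\,p(1-\kappa-l)}\,\frac{q^{\,(\kappa+l-1)M}}{[M]^{\kappa+l}}.
\]
The essential point is that all $M$-dependence has collapsed into the single factor $q^{(\kappa+l-1)M}/[M]^{\kappa+l}$, which is exactly the summand of the (non-bar) $H$ attached to the component $\kappa+l$.

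Finally I would substitute this into each factor of the reversed sum, interchange the finitely many sums modulo $[p]^n$, and collect. With $\bl=(l_1,\dots,l_d)$, the inner sum over $M_1>\cdots>M_d$ becomes $H_{p-1}(\overline{\kk}+\bl;q)$, while the remaining constants assemble into $(-1)^{\wt(\kk)}\,b\binom{\overline{\kk}}{\bl}\,[p]^{\wt(\bl)}\,q^{\,p(d-\wt(\kk)-\wt(\bl))}$. Reindexing $\bl\mapsto\overline{\bl}$ turns $\overline{\kk}+\bl$ into $\overline{\kk+\bl}$ and $b\binom{\overline{\kk}}{\bl}$ into $b\binom{\kk}{\bl}$, and writing $q^{\,p(d-\wt(\kk)-\wt(\bl))}=(q^{p})^{d}(q^{-p})^{\wt(\kk)}(q^{-p})^{\wt(\bl)}$ brings the prefactor into the announced form $(-q^{-p})^{\wt(\kk)}(q^{p})^{\dep(\kk)}(q^{-p}[p])^{\wt(\bl)}$. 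This proves the identity in $Z_{p,n}$ for all primes $p$ and all $n\ge1$; passing to the projective limit and recalling the definitions of $q^{\pm\pp}$ and $[\pp]$ finishes the proof, and the case $\bullet=\star$ follows from the same computation with strict inequalities replaced by non-strict ones.

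The hard part is the bookkeeping in the second step: one must factor $[p-M]$ as $-q^{\,p-M}[M](1-\cdots)$ rather than as the more obvious $q^{-M}([p]-[M])$, so that the residual $M$-dependent term is precisely the non-bar summand $q^{(\kappa+l-1)M}/[M]^{\kappa+l}$ instead of $q^{(\kappa-1)M}/[M]^{\kappa+l}$; the latter leaves behind a spurious factor $q^{-lM}$ that cannot be pulled out of the sum and that would force an unpleasant iteration of the partial-fraction identity \eqref{eq:partial_franction_q}. Once this choice is made everything else is routine manipulation of $q$-series and projective limits, which is why the full computation is deferred to the appendix.
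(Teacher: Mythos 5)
Your proposal is correct and follows essentially the same route as the paper: reverse the summation via $m_a\mapsto p-m_a$, then expand $q^{p-m}/[p-m]^{k}$ using the factorization $[p-m]=-q^{p-m}[m]\bigl(1-(q^{p})^{-1}[p]\,q^{m}[m]^{-1}\bigr)$, which is exactly the paper's Lemma~\ref{prop:[p]-expansions} (the paper justifies the truncated negative-binomial expansion by the polynomial identity $(1-x)^k\sum_{l=0}^{n-1}\binom{k+l-1}{l}x^l=1+x^nf_{k,n}(x)$, which is the rigorous form of your ``$[\pp]$-adically convergent series'' step). The bookkeeping with $\overline{\kk}$, the reindexing $\bl\mapsto\overline{\bl}$, and the assembly of the prefactor all match the paper's argument.
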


\begin{corollary}\label{cor:reversal}
For all index $\kk$, $\bullet\in\{\emptyset,\star\}$ and $\mathcal{F}\in\{\mathcal{A},\mathcal{S}\}$, we have
\begin{equation*}\label{eq:reversal-1}
\begin{aligned}
\zeta_{\widehat{\mathcal{F}}}^\bullet(\kk)&=(-1)^{\wt(\kk)}
\sum_{\bl \in \mathbb{Z}_{\ge 0}^{\dep(\kk)}}\Lambda^{\wt(\bl)}\,
b\binom{\kk}{\bl}
\zeta_{\widehat{\mathcal{F}}}^\bullet(\overline{\kk+\bl}),
\end{aligned}
\end{equation*}
where $\Lambda$ is defined by \eqref{eq:def-Lambda}.
We also have
\begin{equation*}\label{eq:reversal-2}
\begin{aligned}
\overline{\widehat{\xi}^\bullet(\kk)}&=(-1)^{\wt(\kk)}
\sum_{\bl \in \mathbb{Z}_{\ge 0}^{\dep(\kk)}}t^{\wt(\bl)}\,
b\binom{\kk}{\bl}
\widehat{\xi}^\bullet(\overline{\kk+\bl}),
\end{aligned}
\end{equation*}
where the bar on the left means the complex conjugate.
\end{corollary}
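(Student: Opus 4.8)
The plan is to prove the reversal formula for $\overline{\zeta}_{\widehat{\mathcal{Q}}}^\bullet(\kk)$ at the finite level, i.e.\ for the rational functions $\overline{H}_m^\bullet(\kk;q)$ with $m=p-1$, and then pass to the limit in $\widehat{\mathcal{Q}}$. First I would treat the non-star case. The key is the substitution $m_a\mapsto m-m_a$ (reversing the simplex $m\ge m_1>\cdots>m_d>0$) together with the elementary identity
\begin{equation*}
\frac{q^{m-j}}{[m-j]} = -\,q^{-m}\,[m]\,q^{m}\cdot\frac{1}{[m]-q^{-m}[m]\cdot\frac{[j]}{[m]}}\cdots
\end{equation*}
— more precisely, writing $[m-j]=[m]-q^{m-j}[j]$ and expanding $1/[m-j]$ as a geometric series in $[j]/[m]$, one gets
\begin{equation*}
\frac{q^{m-j}}{[m-j]^{k}} = q^{m-j}[m]^{-k}\sum_{r\ge0}\binom{k+r-1}{r}\Big(\frac{q^{m-j}[j]}{[m]}\Big)^{r},
\end{equation*}
which after summing over the reversed index produces exactly the binomial coefficients $b\binom{\kk}{\bl}$ and the powers of $q^{-m}[p]$-type factors. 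Carrying this out for $m=p-1$ and using $[p-1]=[p]-q^{p-1}=\,$(a unit)$\,-\,$(something)\,, together with $q^{p-1}\equiv q^{-1}(1-(1-q)[p])$ modulo $[p]^n$, I would collect the factors into $(-q^{-\pp})^{\wt(\kk)}(q^{\pp})^{\dep(\kk)}(q^{-\pp}[\pp])^{\wt(\bl)}$ as in the statement. The star case then follows either by the same argument over the weakly-decreasing simplex $m\ge m_1\ge\cdots\ge m_d>0$, or more cheaply by expressing $\overline{H}^\star$ in terms of $\overline{H}$ via \eqref{eq:partial_franction_q} and checking the formula is compatible with that decomposition.

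The main obstacle I anticipate is bookkeeping: tracking the powers of $q$ and of $[p]$ under the reversal substitution so that they assemble into the compact closed form, and verifying that the infinite sum over $\bl\in\Z_{\ge0}^{\dep(\kk)}$ converges $[\pp]$-adically in $\widehat{\mathcal{Q}}$ (it does, because each factor $q^{-\pp}[\pp]$ is divisible by $[\pp]$, so only finitely many terms contribute modulo $[p]^n$). One should also be slightly careful that the manipulations are valid as identities of \emph{rational functions} in $q$ before reducing modulo $[p]^n$, invoking Lemma \ref{lem:[m]-inverse} to make sense of $[m]^{-1}$ in $Z_{p,n}$; this is the reason the theorem is stated with $\overline{\zeta}_{\widehat{\mathcal{Q}}}$ on the left (numerators $q^{m_a}$) and $\zeta_{\widehat{\mathcal{Q}}}$ (numerators $q^{(k_a-1)m_a}$) on the right, since the reversal converts one normalization into the other.

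For Corollary \ref{cor:reversal}, I would simply apply the two algebra maps of \S4. Applying $\phi_{\widehat{\mathcal{A}}}$: since $\phi_{\widehat{\mathcal{A}}}(q)=1$, $\phi_{\widehat{\mathcal{A}}}(1-q)=0$, $\phi_{\widehat{\mathcal{A}}}(q^{\pm\pp})=1$, $\phi_{\widehat{\mathcal{A}}}([\pp])=\Lambda=\pp$, and $\phi_{\widehat{\mathcal{A}}}(\overline{\zeta}^\bullet_{\widehat{\mathcal{Q}}}(\kk))=\phi_{\widehat{\mathcal{A}}}(\zeta^\bullet_{\widehat{\mathcal{Q}}}(\kk))=\zeta^\bullet_{\widehat{\mathcal{A}}}(\kk)$ by Theorem \ref{thm:hatA_from_q}, Theorem \ref{thm:q-reversal} collapses to the stated $\widehat{\mathcal{A}}$-identity (continuity of $\phi_{\widehat{\mathcal{A}}}$ justifies applying it termwise to the $[\pp]$-adically convergent sum). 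Applying $\phi_{\widehat{\mathcal{S}}}$: here $\phi_{\widehat{\mathcal{S}}}(q)=1$, $\phi_{\widehat{\mathcal{S}}}([\pp])=t$, $\phi_{\widehat{\mathcal{S}}}(q^{\pm\pp})=1$, $\phi_{\widehat{\mathcal{S}}}(\zeta^\bullet_{\widehat{\mathcal{Q}}}(\kk))=\widehat{\xi}^\bullet(\kk)$ and $\phi_{\widehat{\mathcal{S}}}(\overline{\zeta}^\bullet_{\widehat{\mathcal{Q}}}(\kk))=\overline{\widehat{\xi}^\bullet(\kk)}$ by Theorem \ref{thm:hatS_from_q} (after checking membership in $\widehat{\mathcal{O}}$, which that theorem supplies), so Theorem \ref{thm:q-reversal} becomes the stated $\widehat{\xi}$-identity; reducing modulo $\pi i$ then yields the $\widehat{\mathcal{S}}$-version. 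I expect this corollary to be entirely routine once the theorem is in hand, the only minor point being to confirm $\phi_{\widehat{\mathcal{F}}}$ commutes with the infinite sums, which follows from continuity plus the fact that the sums are convergent in the respective topologies.
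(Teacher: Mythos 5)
Your derivation of the corollary is correct and is exactly the paper's argument: apply $\phi_{\widehat{\mathcal{A}}}$ and $\phi_{\widehat{\mathcal{S}}}$ to Theorem \ref{thm:q-reversal}, using $\phi_{\widehat{\mathcal{F}}}(q^{\pm\pp})=1$, $\phi_{\widehat{\mathcal{F}}}([\pp])=\Lambda$, and Theorems \ref{thm:hatA_from_q} and \ref{thm:hatS_from_q} to identify the images of $\overline{\zeta}^\bullet_{\widehat{\mathcal{Q}}}$ and $\zeta^\bullet_{\widehat{\mathcal{Q}}}$. Your sketch of the underlying theorem (reversal $m_a\mapsto p-m_a$ plus the geometric expansion of $1/[p-m]^k$, i.e.\ Lemma \ref{prop:[p]-expansions}) also matches the paper's route.
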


\begin{remark}
The mod $[p]$ version of Theorem \ref{thm:q-reversal} is proved in
\cite[Theorem 3.1]{HPT17} with the same method.
Corollary \ref{cor:reversal} for $\mathcal{F}=\mathcal{A}$ is proved in \cite[Theorem 4.1]{Rosen15}.
Corollary \ref{cor:reversal} for $\zeta_{\widehat{\mathcal{S}}}$ is
a special case of the shuffle relation,
which can be found in, e.g., \cite{Jarossay19} and \cite{OnoSekiYamamoto}.
\end{remark}

\subsection{Duality formula}


For all odd primes $p$ and $n\ge1$, we have
\[q^{p(p+1)/2}=(1-(1-q)[p])^{(p+1)/2}\equiv
\sum_{l=0}^{n-1} \binom{\frac{p+1}{2}}{l} (-(1-q)[p])^l \mod [p]^n.\]
Let
\begin{align*}
q^{\pp(\pp+1)/2}=((q^{p(p+1)/2} \mod [p]^{n})_{p})_{n} \in
\widehat{\mathcal{Q}}.
\end{align*}

\begin{theorem}\label{thm:Hoffman's-duality}
For any index $\kk$, it holds that
\begin{align*}
q^{\pp(\pp+1)/2}
\sum_{l\ge 0}[\pp]^{l}
\zeta_{\widehat{\mathcal{Q}}}^{\star}(\{1\}^{l}, \kk)+\sum_{l \ge 0}
(q^{-\pp}[\pp])^{l}\, \overline{\zeta}_{\widehat{\mathcal{Q}}}^{\star}(\{1\}^{l}, \kk^{\vee})=0,
\end{align*}
where $\{1\}^l=\underbrace{1,\ldots,1}_l$.
\end{theorem}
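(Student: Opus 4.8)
The plan is to establish the identity at the level of rational functions in $q$ for each fixed odd prime $p$, working with $H_{p-1}^\star(\kk;q)$ and $\overline{H}_{p-1}^\star(\kk;q)$, and then pass to the limit in $\widehat{\mathcal{Q}}$. The starting point is the classical combinatorial bijection underlying Hoffman's duality \eqref{eq:hoffman's_dual}: one rewrites a multiple harmonic $q$-sum over the simplex $p-1\ge m_1\ge\cdots\ge m_d>0$ in terms of the ``complementary'' indices $p-m_i$, which reverses the order of summation and converts the index $\kk$ into its Hoffman dual $\kk^\vee$. Concretely, one expects an identity of the shape
\[
\sum_{l\ge 0}[p]^{l}\,q^{p(p+1)/2}\,H_{p-1}^\star(\{1\}^l,\kk;q)
= -\sum_{l\ge 0}(q^{-p}[p])^{l}\,\overline{H}_{p-1}^\star(\{1\}^l,\kk^\vee;q)
\]
as rational functions in $q$ (with the series interpreted $[p]$-adically, i.e.\ modulo $[p]^n$ for each $n$), which then gives the theorem upon applying the defining maps of $\zeta_{\widehat{\mathcal{Q}}}^\star$ and $\overline{\zeta}_{\widehat{\mathcal{Q}}}^\star$.

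First I would set up the key substitution. The change of variables $m\mapsto p-m$ on $\{1,\dots,p-1\}$ is an involution, and under it the $q$-integer transforms as $[p-m]_q = [p]_q - q^{p-m}[m]_q$, equivalently $q^{m-p}[p-m] = q^{-p}[p]\cdot q^m - [m]$. This is the $q$-analogue of $p-m\equiv -m \pmod p$ that drives the mod-$p$ proof, and expanding $1/[p-m]^{k}$ as a geometric-type series in $[p]$ (using $\tfrac{1}{[m]-[p]}=\sum_{l\ge0}[m]^{-l-1}[p]^l$, exactly as in the proof of Proposition \ref{prop:wt1_hatQ} and in Lemma \ref{lem:[m]-inverse}) is what produces the extra $\{1\}^l$ strings and the powers of $[\pp]$. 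The prefactor $q^{p(p+1)/2}$ arises from collecting the monomials $q^{m_1+\cdots}$ generated when one replaces each $q^{(k_a-1)m_a}$-type numerator: reversing all indices contributes $q^{\sum_{m=1}^{p-1}m}=q^{p(p-1)/2}$ together with correction powers that combine to $q^{p(p+1)/2}$, and on the dual side the numerators $q^{m_a}$ (rather than $q^{(k_a-1)m_a}$) are precisely why the bar-version $\overline{H}^\star$ and the factor $q^{-\pp}[\pp]$ appear. I would carry out this bookkeeping on the generating-function/iterated-sum form of $H^\star$, mirroring the $q$-series manipulations already used for Theorem \ref{thm:q-reversal} and Theorem \ref{thm:BTT}(i), rather than index-by-index.

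The cleanest route to organize the combinatorics is likely the ``$x_0,x_1$-word'' or partial-fraction description: write $H_{p-1}^\star(\kk;q)$ via \eqref{eq:partial_franction_q} in terms of non-star sums, encode the summation simplex as a lattice path, apply the reflection $m_i\mapsto p-m_i$ which reverses the path and swaps the two types of steps (this is exactly the move that sends $\kk\mapsto\kk^\vee$), and read off both the $\{1\}^l$-prefixes and the $[\pp]$-expansion from the denominators $[p-m_i]^{-k_i}$. I would check the resulting identity modulo $[p]^n$ for each $n$, which is legitimate since all sums are finite and every $[m]$ with $1\le m\le p-1$ is invertible modulo $[p]^n$ by Lemma \ref{lem:[m]-inverse}; then the identity holds in $\widehat{\mathcal{Q}}=\varprojlim_n\mathcal{Q}_n$.

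The main obstacle I anticipate is the precise matching of the numerator powers of $q$ on the two sides — verifying that after the reflection and the geometric-series expansion the accumulated monomial is exactly $q^{p(p+1)/2}$ on the star side and exactly a power of $q^{-p}$ times $[p]$ on the bar-star side, with no leftover $q^{m_i}$-dependence. This is where the distinction between $H^\star$ (numerators $q^{(k_a-1)m_a}$) and $\overline{H}^\star$ (numerators $q^{m_a}$) is essential and easy to get off by a sign in the exponent; a careful induction on depth, or an appeal to the analogous already-proved reversal identity in Theorem \ref{thm:q-reversal} applied to each $H^\star(\{1\}^l,\cdot)$ term, should pin it down. Since the referee-facing computation is routine $q$-series algebra of the kind the paper defers to an appendix, I would likewise relegate the detailed verification to Appendix \ref{app:proofs_of_relations}, presenting here only the reflection identity $[p-m]=[p]-q^{p-m}[m]$, the expansion of $([m]-[p])^{-1}$, and the statement that combining them over the simplex yields the claimed equality in $\widehat{\mathcal{Q}}$.
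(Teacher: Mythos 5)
There is a genuine gap at the heart of your argument: you assert that the reflection $m_i\mapsto p-m_i$ on the summation simplex ``is exactly the move that sends $\kk\mapsto\kk^\vee$''. It is not. That reflection reverses the order of the summation variables and therefore produces the \emph{reversed} index $\overline{\kk}$, together with the binomial corrections $b\binom{\kk}{\bl}(q^{-\pp}[\pp])^{\wt(\bl)}$ coming from expanding $1/[p-m]^{k}$ — this is precisely the content of Theorem \ref{thm:q-reversal}, which is a different duality. Hoffman's dual $\kk^\vee$ is obtained by complementing the set of comma positions in the weight-$k$ composition (e.g.\ $(k)^\vee=(\{1\}^{k})$, so even the depth changes), and no change of variables on the simplex realizes it. Already in depth one your scheme would have to convert $\sum_{m}q^{(k-1)m}/[m]^{k}$ into $\sum_{m_1\ge\cdots\ge m_k}\prod_a q^{m_a}/[m_a]$, which requires a Dilcher/binomial-type identity, not a reflection. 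Consequently the ``identity of the shape'' you posit is never actually derived, and your fallback of applying Theorem \ref{thm:q-reversal} to each $H^\star(\{1\}^l,\cdot)$ cannot supply $\kk^\vee$ either.

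The missing ingredient is the finite $q$-duality of Bradley and Kawashima, which is the actual starting point of the paper's proof: an exact identity equating a $\qbinom{n-1}{m_1-1}$-weighted star sum attached to $\kk$ with a star sum attached to $\kk^\vee$ whose outermost variable is pinned to $n$. One then multiplies by an alternating, $q$-binomially weighted factor and sums over $n\le p-1$ using a Vandermonde-type convolution, after which the two surviving $q$-binomial coefficients $\qbinom{p-1}{m-1}$ are expanded modulo $[p]^n$ via $\prod_{a=m}^{p-1}[a]/([a]-[p])$ on one side and $\prod_{a=m}^{p-1}[a]/[p-a]$ on the other; it is these expansions — not an expansion of $1/[p-m_i]^{k_i}$ inside the multiple sum — that generate the $\{1\}^l$ prefixes, the factors $[\pp]^{l}$ and $(q^{-\pp}[\pp])^{l}$, and the prefactor $q^{\pp(\pp+1)/2}$. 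Your correct observations — that $1/([m]-[p])=\sum_{l}[p]^{l}/[m]^{l+1}$ produces strings of $1$'s, and that the bar model is responsible for the $q^{-\pp}$ — are fragments of this computation, but without the $q$-binomial duality as the organizing identity the proof does not go through.
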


\begin{corollary}\label{cor:duality_F}
Let $\kk$ be an index. We have
\begin{align}
\sum_{l\ge 0}\Lambda^{l}\,
\zeta_{\widehat{\mathcal{F}}}^{\star}(\{1\}^{l}, \kk)+\sum_{l \ge 0}
\Lambda^{l}\, \zeta_{\widehat{\mathcal{F}}}^{\star}(\{1\}^{l}, \kk^{\vee})=0
\label{eq:Hoffman-dual-1}
\end{align}
for $\mathcal{F}=\mathcal{A}$ and $\mathcal{S}$,
where $\Lambda$ is given by \eqref{eq:def-Lambda},
and
\begin{align}
e^{\pi i t}\sum_{l\ge 0}t^{l}\,\widehat{\xi}^{\star}(\{1\}^{l}, \kk)+\sum_{l \ge 0}
t^{l}\, \overline{\widehat{\xi}^{\star}(\{1\}^{l}, \kk^{\vee})}=0.
\label{eq:Hoffman-dual-2}
\end{align}
\end{corollary}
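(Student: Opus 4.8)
The plan is to deduce both formulas from Theorem \ref{thm:Hoffman's-duality} by applying the continuous algebra homomorphisms $\phi_{\widehat{\mathcal{A}}}$ and $\phi_{\widehat{\mathcal{S}}}$ to the displayed identity term by term, exactly as Andrews' relation was treated after Proposition \ref{prop:wt1_hatQ}. The auxiliary evaluations I would first record are: (a) $\phi_{\widehat{\mathcal{F}}}\big(q^{-\pp}[\pp]\big)=\Lambda$ for $\mathcal{F}\in\{\mathcal{A},\mathcal{S}\}$, which follows from $q^{-\pp}[\pp]=\sum_{l\ge0}(1-q)^{l}[\pp]^{l+1}$ together with $\phi_{\widehat{\mathcal{F}}}([\pp])=\Lambda$, $\phi_{\widehat{\mathcal{F}}}(q)=1$ (Proposition \ref{prop:1-q} and \eqref{eq:def-Lambda}), multiplicativity and continuity; (b) $\phi_{\widehat{\mathcal{A}}}\big(q^{\pp(\pp+1)/2}\big)=1$; and (c) $\phi_{\widehat{\mathcal{S}}}\big(q^{\pp(\pp+1)/2}\big)=e^{\pi i t}$.

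For (b) and (c) I would use the expansion $q^{\pp(\pp+1)/2}=\sum_{l\ge0}\binom{(\pp+1)/2}{l}\big(-(1-q)[\pp]\big)^{l}$ from the paragraph preceding Theorem \ref{thm:Hoffman's-duality}, noting that every summand with $l\ge1$ carries a factor $1-q$. Applying $\phi_{\widehat{\mathcal{A}}}$ and using $\phi_{\widehat{\mathcal{A}}}(1-q)=0$ leaves only the $l=0$ term, which is (b). For (c), I would write $\binom{(\pp+1)/2}{l}$ as a polynomial in $\pp$ with leading coefficient $1/(2^{l}l!)$ and regroup $\binom{(\pp+1)/2}{l}(1-q)^{l}[\pp]^{l}$ as a $\mathbb{Q}$-linear combination of the elements $\big(\pp(1-q)\big)^{j}(1-q)^{l-j}[\pp]^{l}$ with $0\le j\le l$. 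Since $\phi_{\widehat{\mathcal{S}}}(\pp(1-q))=-2\pi i$, $\phi_{\widehat{\mathcal{S}}}(1-q)=0$ and $\phi_{\widehat{\mathcal{S}}}([\pp])=t$, each such term maps to $0$ when $j<l$, while the $j=l$ part of the $l$-th summand contributes $\tfrac{(-1)^{l}}{2^{l}l!}(-2\pi i)^{l}t^{l}=\tfrac{(\pi i)^{l}}{l!}t^{l}$; summing over $l$ yields $e^{\pi i t}$. (One should also note here that $\pp(1-q)$, $1-q$ and $[\pp]$ all lie in $\widehat{\mathcal{O}}$, so that $q^{\pp(\pp+1)/2}\in\widehat{\mathcal{O}}$ and $\phi_{\widehat{\mathcal{S}}}$ may legitimately be applied, even though $\pp$ itself is \emph{not} in $\widehat{\mathcal{O}}$.)

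With (a)--(c) at hand, I would apply $\phi_{\widehat{\mathcal{A}}}$ to the identity of Theorem \ref{thm:Hoffman's-duality}: continuity lets it pass through the $[\pp]$-adically convergent sums, multiplicativity factors the products $q^{\pp(\pp+1)/2}[\pp]^{l}$ and $(q^{-\pp}[\pp])^{l}$, and Theorem \ref{thm:hatA_from_q} sends $\zeta_{\widehat{\mathcal{Q}}}^{\star}(\{1\}^{l},\kk)$ and $\overline{\zeta}_{\widehat{\mathcal{Q}}}^{\star}(\{1\}^{l},\kk^{\vee})$ to $\zeta_{\widehat{\mathcal{A}}}^{\star}(\{1\}^{l},\kk)$ and $\zeta_{\widehat{\mathcal{A}}}^{\star}(\{1\}^{l},\kk^{\vee})$; combined with (a) and (b) this gives \eqref{eq:Hoffman-dual-1} for $\mathcal{F}=\mathcal{A}$. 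Applying $\phi_{\widehat{\mathcal{S}}}$ in the same way — now invoking Theorem \ref{thm:hatS_from_q}, which also certifies that all elements involved lie in $\widehat{\mathcal{O}}$, together with (a) and (c) — yields \eqref{eq:Hoffman-dual-2}. Finally, reducing \eqref{eq:Hoffman-dual-2} modulo $\pi i\mathcal{Z}[\pi i][[t]]$, where $e^{\pi i t}\equiv 1$, complex conjugation acts trivially, and $\widehat{\xi}^{\star}\equiv\zeta_{\widehat{\mathcal{S}}}^{\star}$ by Theorem \ref{thm:hatS_from_q}, produces \eqref{eq:Hoffman-dual-1} for $\mathcal{F}=\mathcal{S}$.

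The only real obstacle I anticipate is step (c): since $\pp\notin\widehat{\mathcal{O}}$, one must genuinely exploit that $\phi_{\widehat{\mathcal{S}}}$ pushes only the combinations $\pp(1-q)$ and $[\pp]$ (hence $\pp^{j}(1-q)^{l}[\pp]^{l}$ with $j\le l$) into $\C[[t]]$, the first with the finite value $-2\pi i$ and the leftover powers of $1-q$ forcing the vanishing that collapses the double sum to the exponential $e^{\pi i t}$. Everything else is the routine bookkeeping of applying a continuous algebra homomorphism to a convergent series.
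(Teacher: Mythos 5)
Your proposal is correct, and it reaches \eqref{eq:Hoffman-dual-2} by a genuinely different route in the one step that actually requires work, namely the evaluation $\phi_{\widehat{\mathcal{S}}}\big(q^{\pp(\pp+1)/2}\big)=e^{\pi i t}$. The paper proves this analytically: it sets $(-1)^{m-1}q_m(t)^{m(m+1)/2}=1+\sum_{l\ge1}\alpha_l(m)t^l$ and computes $\alpha_l(m)=\frac{(\pi i)^l}{l!}(1+O(m^{-1}))$ directly from Proposition \ref{prop:expansion} and Lemma \ref{lem:SNL}, i.e.\ it reruns the residue/asymptotic machinery for the specific function $q^{m(m+1)/2}$. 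You instead stay inside $\widehat{\mathcal{Q}}$, expand $q^{\pp(\pp+1)/2}=\sum_{l\ge0}\binom{(\pp+1)/2}{l}(-(1-q)[\pp])^l$ (which the paper records just before Theorem \ref{thm:Hoffman's-duality}), regroup each summand into the elements $(\pp(1-q))^j(1-q)^{l-j}[\pp]^l$ of $\widehat{\mathcal{O}}$, and read off the image from the already-recorded values $\phi_{\widehat{\mathcal{S}}}(\pp(1-q))=-2\pi i$, $\phi_{\widehat{\mathcal{S}}}(1-q)=0$, $\phi_{\widehat{\mathcal{S}}}([\pp])=t$; only the top-degree coefficient $1/(2^ll!)$ of $\binom{(\pp+1)/2}{l}$ survives and the sum collapses to $e^{\pi i t}$. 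Your computation checks out, and your remark that $\pp\notin\widehat{\mathcal{O}}$ while the combinations $\pp^j(1-q)^l[\pp]^l$ with $j\le l$ do lie in $\widehat{\mathcal{O}}$ is exactly the point that makes the regrouping legitimate. What your route buys is that no new asymptotic analysis is needed beyond Proposition \ref{prop:1-q} (which underlies $\phi_{\widehat{\mathcal{S}}}(\pp(1-q))=-2\pi i$ anyway); what the paper's route buys is that it works directly from the definition of $\phi_{\widehat{\mathcal{S}}}$ without having to justify the term-by-term regrouping of a $[\pp]$-adically convergent double sum. Everything else — applying $\phi_{\widehat{\mathcal{A}}}$ for \eqref{eq:Hoffman-dual-1} with $\mathcal{F}=\mathcal{A}$, and deducing the case $\mathcal{F}=\mathcal{S}$ by reducing \eqref{eq:Hoffman-dual-2} modulo $\pi i\mathcal{Z}[\pi i][[t]]$ — coincides with the paper's argument.
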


\begin{remark}
The duality formula \eqref{eq:Hoffman-dual-1} for $\widehat{\mathcal{A}}$-MZVs
is proved by Seki \cite{Seki19}, and
its proof is simplified by Shuji Yamamoto.
Our proof of Theorem \ref{thm:Hoffman's-duality} is a $q$-analogue of
the simplified proof.
We thank Shin-ichiro Seki and Shuji Yamamoto for informing their results to us.
The duality formula \eqref{eq:Hoffman-dual-2}
is announced by Minoru Hirose.
\end{remark}

\subsection{Cyclic sum formula}

Let $\mathfrak{S}_{d}$ be the symmetric group on a set of $d$ elements.
The subgroup of $\mathfrak{S}_{d}$ generated by
the cyclic permutation $\sigma=(1, 2, \ldots , d)$ acts on the set of
indices of weight $k$ and depth $d$ by
$\sigma (k_{1}, k_{2}, \ldots , k_{d})=(k_{2}, \ldots , k_{d}, k_{1})$.
We denote the set of orbits of the action by $\Pi(k, d)$.
For an orbit $\alpha \in \Pi(k,d)$ we denote its cardinality by $|\alpha|$.

\begin{theorem}\label{thm:cyclic-sum}
For any orbit $\alpha \in \Pi(k, d)$ it holds that
\begin{equation}\label{eq:cyc_sum_q}
\begin{aligned}
&\sum_{\kk \in \alpha}
\sum_{s=0}^{\kk_{1}-2}
\zeta_{\widehat{\mathcal{Q}}}(\kk_{1}-s, \kk^1, s+1)\\
&=\sum_{\kk \in \alpha}
\zeta_{\widehat{\mathcal{Q}}}(\kk_{1}+1, \kk^1)+\sum_{\kk \in \alpha}\sum_{l\ge0}
(q^{-\pp}[\pp])^{l}\bigg\{
\zeta_{\widehat{\mathcal{Q}}}(\kk^1, \kk_{1},l+1)+
\zeta_{\widehat{\mathcal{Q}}}(\kk^1, \kk_{1}+ l+1)
\bigg\} \\
&+(1-q)\sum_{\kk \in \alpha}\sum_{l\ge0}(q^{-\pp}[\pp])^{l}
\zeta_{\widehat{\mathcal{Q}}}(\kk^1, \kk_{1}+l)
\end{aligned}
\end{equation}
and
\begin{equation}\label{eq:cyc_sum_q_star}
\begin{aligned}
&
\sum_{\kk \in \alpha}\sum_{s=0}^{\kk_{1}-2}
\zeta_{\widehat{\mathcal{Q}}}^{\star}(\kk_{1}-s, \kk^{1}, s+1) \\
&=\frac{k}{d}|\alpha|
\zeta_{\widehat{\mathcal{Q}}}^{\star}(k+1)+
|\alpha|\sum_{j=1}^{d}
\left(\frac{k}{j}-1\right)\binom{d}{j}
(1-q)^{j} \zeta_{\widehat{\mathcal{Q}}}^\star(k+1-j)\\
&+\sum_{\kk \in \alpha}\sum_{l\ge 0}(q^{-\pp}[\pp])^{l}
\zeta_{\widehat{\mathcal{Q}}}^{\star}(\kk^{1}, \kk_{1},l+1).
\end{aligned}
\end{equation}
\end{theorem}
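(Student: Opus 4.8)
\textbf{Proof proposal for Theorem~\ref{thm:cyclic-sum}.}
The plan is to deduce both identities from a single family of identities among truncated multiple harmonic $q$-sums $H_{p-1}^{\bullet}(\kk;q)$, viewed as rational functions of $q$ and then reduced modulo $[p]^{n}$, imitating the known proofs of the cyclic sum formula for finite and symmetric multiple zeta values and $q$-deforming every step. The first step is to make the right-hand sides concrete: since $q^{p}=1-(1-q)[p]$, one has $q^{-p}=\sum_{l\ge0}\big((1-q)[p]\big)^{l}$ and $\dfrac{1}{[m]-[p]}=\sum_{l\ge0}\dfrac{[p]^{l}}{[m]^{l+1}}$, so each of $q^{\pm\pp}$, $[\pp]^{l}$ and $(q^{-\pp}[\pp])^{l}$ is the image under $\widehat{\mathcal{Q}}=\varprojlim_{n}\mathcal{Q}_{n}$ of an explicit sequence of rational functions. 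Since an identity in $\mathcal{Q}_{n}$ means componentwise equality for all but finitely many primes, it then suffices to prove, for every large prime $p$ and every $n\ge1$, the resulting congruence modulo $[p]^{n}$, which in turn follows from an identity of rational functions in $q$ that is uniform in $p$, the truncation length $p-1$ entering only through $[p]$.

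For the non-star formula \eqref{eq:cyc_sum_q}, I would write each $\zeta_{\widehat{\mathcal{Q}}}(\kk_{1}-s,\kk^{1},s+1)$ as a sum over $p-1\ge m_{1}>\cdots>m_{d}>m_{d+1}>0$; then the inner sum over $s=0,\dots,\kk_{1}-2$ is a finite geometric series with ratio $\dfrac{[m_{1}]q^{m_{d+1}}}{q^{m_{1}}[m_{d+1}]}$, whose closed form introduces the ``cyclic'' denominator $[m_{1}-m_{d+1}]$ coupling the largest and smallest summation variables. Applying the $q$-partial fraction identities $\dfrac{1}{[a]\big([b]-[a]\big)}=\dfrac{1}{[b]}\Big(\dfrac{1}{[a]}+\dfrac{1}{[b]-[a]}\Big)$ and \eqref{eq:partial_franction_q} repeatedly peels off this coupling; summing over the orbit $\alpha$ makes the off-diagonal pieces telescope around the cycle, leaving the main term $\zeta_{\widehat{\mathcal{Q}}}(\kk_{1}+1,\kk^{1})$ together with contributions in which $m_{d+1}$ is no longer constrained below $m_{d}$ but ``wraps around'' $p$. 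For the wrapped variable $[m_{1}-m_{d+1}]$ becomes a unit times $[m]-[p]$; expanding this as a geometric series produces exactly $\sum_{l\ge0}(q^{-\pp}[\pp])^{l}\big\{\zeta_{\widehat{\mathcal{Q}}}(\kk^{1},\kk_{1},l+1)+\zeta_{\widehat{\mathcal{Q}}}(\kk^{1},\kk_{1}+l+1)\big\}$, the factor $q^{-\pp}$ being the trace of the numerator $q^{-p}=\sum_{l}((1-q)[p])^{l}$ of the wrapped variable, while the second term of \eqref{eq:partial_franction_q} contributes the correction $(1-q)\sum_{l\ge0}(q^{-\pp}[\pp])^{l}\zeta_{\widehat{\mathcal{Q}}}(\kk^{1},\kk_{1}+l)$.

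For the star formula \eqref{eq:cyc_sum_q_star} I would not rerun the telescoping but expand each $H^{\star}_{p-1}$ as a $\Z[1-q]$-linear combination of non-star $H_{p-1}$'s via \eqref{eq:partial_franction_q}, apply \eqref{eq:cyc_sum_q} term by term (inducting on the depth for the merged pieces, the base case $\kk=(k_{1})$ being a direct computation), and reassemble. Summing over the orbit $\alpha$, every merged contribution of depth $\ge2$ recombines into the single series $\sum_{l\ge0}(q^{-\pp}[\pp])^{l}\zeta^{\star}_{\widehat{\mathcal{Q}}}(\kk^{1},\kk_{1},l+1)$, while the fully collapsed depth-one pieces of weight $k+1-j$ accumulate with a multiplicity that is a sum of binomial coefficients; a short generating-function argument should identify this multiplicity as $|\alpha|\big(\tfrac{k}{j}-1\big)\binom{d}{j}$ for $j\ge1$ and $\tfrac{k}{d}|\alpha|$ for $j=0$, giving the stated coefficients. (The depth-one case $\kk=(k_{1})$ can be checked by hand, and the numbers $k_{1}$ and $(k_{1}-1)(1-q)$ come out correctly, which supports the bookkeeping.)

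The step I expect to be the main obstacle is the bookkeeping of the $q$-power numerators through the wrap-around together with the resulting proliferation of correction terms. Classically the boundary contributions at $m=p$ vanish modulo $p$ (they carry a factor of $p$, respectively of $[p]$ in the $q$-setting), so the finite cyclic sum formula has no analogue of the $\sum_{l}(q^{-\pp}[\pp])^{l}$ series; here each such term survives as a genuine geometric series in $[\pp]$ and, through $q^{p}=1-(1-q)[p]$, also in $(1-q)[\pp]$, and one must verify that all of them assemble into exactly the patterns $q^{\pm\pp}$ and $q^{-\pp}[\pp]$ appearing in the statement rather than some skew variant; this is the same kind of error-tracking used in the proofs of the reversal formula (Theorem~\ref{thm:q-reversal}) and the duality formula (Theorem~\ref{thm:Hoffman's-duality}). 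A secondary difficulty is the combinatorial identity collapsing the nested binomial sum in the star case into the clean coefficients $\tfrac{k}{d}|\alpha|$ and $\big(\tfrac{k}{j}-1\big)\binom{d}{j}(1-q)^{j}$, which I would isolate as a separate lemma.
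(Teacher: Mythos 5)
Your treatment of the non-star identity \eqref{eq:cyc_sum_q} is essentially the paper's argument. The paper packages your ``geometric series in $s$'' as a telescoping recursion for an auxiliary sum $S_{p}(k_{1},\ldots,k_{d};l)$ carrying the coupled factor $q^{m_{1}}/[m_{1}-m_{d+1}]$, together with a companion sum $T_{p}$ whose difference $\sum_{\kk\in\alpha}\bigl(T_{p}(\kk)-T_{p}(\kk^{1},\kk_{1})\bigr)$ vanishes over the orbit; the wrap-around contribution is isolated as a sum $V_{p}$ containing $q^{p-m_{1}}/[p-m_{1}]$, which Lemma \ref{prop:[p]-expansions} (with $k=1$) and \eqref{eq:partial_franction_q} convert into exactly the $\sum_{l}(q^{-\pp}[\pp])^{l}$ series plus the $(1-q)$ correction. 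So for \eqref{eq:cyc_sum_q} your outline is on target, modulo writing down the two partial-fraction identities \eqref{eq:decomp1}--\eqref{eq:decomp2} that make the telescoping and the wrap-around precise.

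For the star identity \eqref{eq:cyc_sum_q_star} your route diverges from the paper's, and as written it has a gap. You propose to expand each $H^{\star}_{p-1}$ into non-star sums and ``apply \eqref{eq:cyc_sum_q} term by term.'' But \eqref{eq:cyc_sum_q} is an identity for an entire cyclic orbit of indices of a fixed depth, and the merged indices produced by the star expansion of $\sum_{\kk\in\alpha}\sum_{s}\zeta^{\star}_{\widehat{\mathcal{Q}}}(\kk_{1}-s,\kk^{1},s+1)$ do not reassemble into left-hand sides of \eqref{eq:cyc_sum_q} for lower-depth orbits: already for $\alpha=\{(2,1),(1,2)\}$ the expansion of $\zeta^{\star}(2,1,1)$ contains $\zeta(4)$, which belongs to no cyclic-sum left side, and the $(1-q)$-corrections and the right-hand-side tails of \eqref{eq:cyc_sum_q} would all have to be re-expanded and recombined. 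The coefficients $\tfrac{k}{d}|\alpha|$ and $\bigl(\tfrac{k}{j}-1\bigr)\binom{d}{j}(1-q)^{j}$, which you defer to ``a short generating-function argument,'' are precisely where this bookkeeping concentrates, so the proposal leaves the essential step unproven. The paper avoids the reduction entirely: it reruns the telescoping with weak inequalities (sums $B_{p}$, $C_{p}$ in place of $S_{p}$, $T_{p}$), so that the diagonal contributions $m_{1}=\cdots=m_{d+1}$ produce single sums $\sum_{p>m>0}q^{(k-d)m}/[m]^{k+1}$ and $\sum_{p>m>0}q^{(k-d)m}/[m]^{k}$ directly; summing $\kk_{1}-1$ over the orbit gives $\sum_{\kk\in\alpha}\kk_{1}=\tfrac{k}{d}|\alpha|$, and the binomial expansion $q^{(k-d)m}/[m]^{k}=\sum_{l=0}^{d-1}\binom{d-1}{l}(1-q)^{l}\,q^{(k-l-1)m}/[m]^{k-l}$ (and its analogue for $[m]^{k+1}$) yields the stated coefficients without any induction on depth. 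I would recommend adopting that direct star-analogue computation rather than the inclusion-exclusion reduction.
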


\begin{corollary}\label{cor:cyclic_sum}
For any $\alpha \in \Pi(k, d)$ and $\mathcal{F}\in\{\mathcal{A},\mathcal{S}\}$ we have
\begin{align*}
&
\sum_{\kk \in \alpha}\sum_{s=0}^{\kk_{1}-2}
\zeta_{\widehat{\mathcal{F}}}(\kk_{1}-s, \kk^{1}, s+1) \\
&=\sum_{\kk \in \alpha}
\zeta_{\widehat{\mathcal{F}}}(\kk_{1}+1, \kk^{1})+\sum_{\kk \in \alpha}
\sum_{l\ge 0}\Lambda^{l}
\left(
\zeta_{\widehat{\mathcal{F}}}(\kk^{1}, \kk_{1},l+1)+
\zeta_{\widehat{\mathcal{F}}}(\kk^{1}, \kk_{1}+l+1)
\right)
\end{align*}
and
\begin{align*}
\sum_{\kk \in \alpha}\sum_{s=0}^{\kk_{1}-2}
\zeta_{\widehat{\mathcal{F}}}^{\star}(\kk_{1}-s, \kk^{1}, s+1)=\frac{k}{d}\left|\alpha\right|
\zeta_{\widehat{\mathcal{F}}}^\star(k+1)+\sum_{\kk \in \alpha}
\sum_{l\ge 0}\Lambda^{l}
\zeta_{\widehat{\mathcal{F}}}^{\star}(\kk^{1}, \kk_{1},l+1),
\end{align*}
where $\Lambda$ is defined by \eqref{eq:def-Lambda}.
The above equality with $\zeta_{\widehat{\mathcal{F}}}^{\bullet}$ and $\Lambda$ replaced
by $\widehat{\xi}^{\bullet}$ and $t$, respectively,
also holds for $\bullet \in \{\emptyset, \star\}$.
\end{corollary}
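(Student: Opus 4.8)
The plan is to deduce both identities from exact identities for the \emph{finite} multiple harmonic $q$-sums $H_{p-1}^{\bullet}(\kk;q)$ — valid as equalities of rational functions in $q$ whose denominators are coprime to $[p]$, for all primes $p$ — and then to reduce modulo $[p]^{n}$ and pass to the projective limit $\widehat{\mathcal{Q}}=\varprojlim_{n}\mathcal{Q}_{n}$, exactly as the statement is phrased. All the $[m]^{-1}$ with $1\le m\le p-1$ that occur are genuine elements of $Z_{p,n}$ by Lemma \ref{lem:[m]-inverse}, so this descent is legitimate, and the series in $[\pp]$ appearing on the right-hand sides converge $[\pp]$-adically in $\widehat{\mathcal{Q}}$.

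For the non-star identity \eqref{eq:cyc_sum_q} I would first run, at the level of $H_{p-1}$, the $q$-analogue of the Ohno--Wakabayashi cyclic-sum telescoping (as in Bradley's $q$-cyclic sum formula, equivalently a connected-sum argument in the style of Seki--Yamamoto). Concretely: introduce the innermost new variable $m_{d+1}$ with $m_{d}>m_{d+1}>0$, rewrite $\sum_{s=0}^{k_{1}-2}\frac{q^{(k_{1}-s-1)m_{1}}}{[m_{1}]^{k_{1}-s}}\cdot\frac{q^{s\,m_{d+1}}}{[m_{d+1}]^{s+1}}$ using the $q$-partial-fraction relations (the identity $[m]-[n]=q^{n}[m-n]$ for $m>n$ together with \eqref{eq:partial_franction_q}), and sum over all rotations $\kk\in\alpha$. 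The interior contributions cancel, leaving (i) the ``diagonal'' term $\sum_{\kk\in\alpha}H_{p-1}(k_{1}+1,\kk^{1};q)$ and (ii) the \emph{boundary} contributions, namely those in which a shifted summation variable reaches the endpoint $m=p$. Substituting $q^{p}=1-(1-q)[p]$, i.e.\ $q^{-\pp}=\sum_{l\ge0}((1-q)[\pp])^{l}$ in $\widehat{\mathcal{Q}}$, each boundary contribution expands $[\pp]$-adically into one of the two series $\sum_{l\ge0}(q^{-\pp}[\pp])^{l}\,\zeta_{\widehat{\mathcal{Q}}}(\kk^{1},\kk_{1},l+1)$ and $\sum_{l\ge0}(q^{-\pp}[\pp])^{l}\,\zeta_{\widehat{\mathcal{Q}}}(\kk^{1},\kk_{1}+l+1)$, the remaining family $(1-q)\sum_{l\ge0}(q^{-\pp}[\pp])^{l}\,\zeta_{\widehat{\mathcal{Q}}}(\kk^{1},\kk_{1}+l)$ being precisely the correction produced by one application of \eqref{eq:partial_franction_q} when two factors are merged at the endpoint.

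For the star identity \eqref{eq:cyc_sum_q_star} I would re-run the same telescoping directly for $H_{p-1}^{\star}$; the weak inequalities change the interior cancellations slightly but the endpoint analysis is identical, so the surviving series again appears as $\sum_{l\ge0}(q^{-\pp}[\pp])^{l}\,\zeta_{\widehat{\mathcal{Q}}}^{\star}(\kk^{1},\kk_{1},l+1)$. The depth-independent terms $\frac{k}{d}|\alpha|\,\zeta_{\widehat{\mathcal{Q}}}^{\star}(k+1)$ and $|\alpha|\sum_{j=1}^{d}\big(\frac{k}{j}-1\big)\binom{d}{j}(1-q)^{j}\zeta_{\widehat{\mathcal{Q}}}^{\star}(k+1-j)$ collect the ``fully collapsed'' terms in which all $d$ cyclic variables coincide, the binomials and the powers $(1-q)^{j}$ being produced by iterating \eqref{eq:partial_franction_q}. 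Taking $\varprojlim_{n}$ then gives both statements in $\widehat{\mathcal{Q}}$. Finally, Corollary \ref{cor:cyclic_sum} follows by applying the continuous $\Q$-algebra maps $\phi_{\widehat{\mathcal{A}}}$ and $\phi_{\widehat{\mathcal{S}}}$, using $\phi_{\widehat{\mathcal{F}}}([\pp])=\Lambda$, $\phi_{\widehat{\mathcal{F}}}(q)=1$ (hence $\phi_{\widehat{\mathcal{F}}}(1-q)=0$ and $\phi_{\widehat{\mathcal{F}}}(q^{-\pp}[\pp])=\Lambda$), together with Theorems \ref{thm:hatA_from_q} and \ref{thm:hatS_from_q}: every $(1-q)$-weighted family dies, leaving exactly the asserted formulas for $\zeta_{\widehat{\mathcal{F}}}^{\bullet}$ and, via \eqref{eq:hatS_from_q}, for $\widehat{\xi}^{\bullet}$.

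The main obstacle is the bookkeeping inside \eqref{eq:cyc_sum_q} and \eqref{eq:cyc_sum_q_star}: pinning down exactly which endpoint terms survive the cyclic telescoping and checking that, after the substitution $q^{p}=1-(1-q)[p]$ and $[\pp]$-adic expansion, they reassemble into the precise combinations of $(q^{-\pp}[\pp])^{l}$-series written on the right — and, in the star case, reproducing the depth-dropping terms with the exact coefficients $\big(\frac{k}{j}-1\big)\binom{d}{j}$. Everything else ($[\pp]$-adic convergence, descent modulo $[p]^{n}$, passage to $\varprojlim_{n}$, and application of $\phi_{\widehat{\mathcal{F}}}$) is routine.
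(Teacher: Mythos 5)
Your final step --- applying the continuous algebra maps $\phi_{\widehat{\mathcal{A}}}$ and $\phi_{\widehat{\mathcal{S}}}$ to Theorem \ref{thm:cyclic-sum}, using $\phi_{\widehat{\mathcal{F}}}(q^{\pm\pp})=1$, $\phi_{\widehat{\mathcal{F}}}(1-q)=0$, $\phi_{\widehat{\mathcal{F}}}([\pp])=\Lambda$, and Theorems \ref{thm:hatA_from_q} and \ref{thm:hatS_from_q} (the latter giving the $\widehat{\xi}^{\bullet}$ version exactly, hence the $\zeta_{\widehat{\mathcal{S}}}^{\bullet}$ version modulo $\pi i$) --- is precisely the paper's proof of this corollary, so your argument is correct and takes the same route. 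The lengthy re-derivation of Theorem \ref{thm:cyclic-sum} itself is unnecessary, since that theorem is available to cite; in any case your telescoping sketch matches the paper's Appendix B argument in outline.
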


\begin{remark}
Our proof of Theorem \ref{thm:cyclic-sum} is a $q$-analogue of
the proof of Corollary \ref{cor:cyclic_sum} for $\mathcal{F}=\mathcal{A}$
due to Kawasaki \cite{Kawasaki} (see also \cite{KawasakiOyama}).
Corollary \ref{cor:cyclic_sum} for $\mathcal{F}=\mathcal{S}$ is shown in \cite{HiroseMuraharaOno} and Corollary \ref{cor:cyclic_sum} for $\widehat{\xi}^\bullet \mod t$ is announced by Nobuo Sato and Minoru Hirose.
\end{remark}

\section{Discussions on dimensions}\label{sec:observations}

\subsection{Enumeration of relations among $\mathcal{Q}$-MZVs}
This subsection discusses relations of $\zeta_{\mathcal{Q}}(\kk)$'s of the form \eqref{eq:rel's_Q-MZV}, based on some experimental works.

For $k\ge1$, we denote by $\mathbb{I}_k$ the set of all indices of weight $k$.
Consider the $\Q$-vector space $\mathcal{Z}^{\mathcal{O}}_k$ spanned by the set
\[ \{ \pp^h(1-q)^j \zeta_{\mathcal{Q}}(\kk)\mid 0\le h\le j\le k,\ \kk\in \mathbb{I}_{k-j}\} ,\]
which is a $\Q$-vector subspace of the $\Q[\pp]$-module generated by elements $(1-q)^j \zeta_{\mathcal{Q}}(\kk)$ of weight $k$.
For example, the space $\mathcal{Z}^{\mathcal{O}}_2$ is generated by
\[ \zeta_{\mathcal{Q}}(2), \zeta_{\mathcal{Q}}(1,1),(1-q)\zeta_{\mathcal{Q}}(1),\pp (1-q)\zeta_{\mathcal{Q}}(1),(1-q)^2,\pp(1-q)^2,\pp^2(1-q)^2.\]
We let $\mathcal{Z}^{\mathcal{O}}_0=\Q$ as usual.
The reason why we consider the $\Q$-vector space $\mathcal{Z}^{\mathcal{O}}_k$, not the $\Q[\pp]$-module, is that the whole space
\[\mathcal{Z}^{\mathcal{O}}=\sum_{k\ge0} \mathcal{Z}^{\mathcal{O}}_k\]
with the product given by the $q$-stuffle product (see \cite{Bradley05}) is a subalgebra of $\mathcal{O}$ defined in \S\ref{sec:QtoAS}. 
Moreover, by Theorem \ref{thm:AS_from_q} (and \cite[Proposition 16]{Hirose}), we have
\[\phi_{\mathcal{A}} \big(\mathcal{Z}^{\mathcal{O}}_k\big)= \mathcal{Z}^{\mathcal{A}}_k \quad \mbox{and} \quad \phi_{\mathcal{S}} \big(\mathcal{Z}^{\mathcal{O}}_k\big)= \mathcal{Z}_k\oplus \pi i\mathcal{Z}_{k-1}.\]

Our numerical implementation (by \cite{PARI}) is as follows.
Let us fix a finite set $S$ of primes.
For each index $\kk$ and $0\le h\le j$, we first compute a polynomial $n(h,j,\kk;q) \in \Q[q]\big/ \big(\prod_{p\in S}[p]\big)$ such that $n(h,j,\kk;q)\equiv p^h (1-q)^j H_{p-1}(\kk;q)\mod [p]$ for all $p\in S$ (use the Chinese remainder theorem), and then, find numerical $\Q$-linear relations among numerical values $n(h,j,\kk;q)$'s at $q$ being a fixed transcendental number.
These numerical data provide possible $\Q$-linear relations of the generator of $\mathcal{Z}^{\mathcal{O}}_k$, and in this way, we can count the number of all linearly independent (possible) relations over $\Q$.
For simplicity, letting $V_k = (1-q)\mathcal{Z}^{\mathcal{O}}_{k-1}+\pp (1-q) \mathcal{Z}^{\mathcal{O}}_{k-1}\subset \mathcal{Z}^{\mathcal{O}}_k$, we below give a numerical dimension of
\[  \widetilde{\mathcal{Z}}^{\mathcal{O}}_k = \mathcal{Z}^{\mathcal{O}}_k\big/ V_k.\]

\begin{table}[hbtp]
  \caption{Table of numerical dimensions}
  \label{dim}
$\begin{array}{c|cccccccccccccccccccccccccc}
k & 1&2&3&4&5&6&7&8&9&10&11&12\\ \hline
\dim_{\Q} \widetilde{\mathcal{Z}}^{\mathcal{O}}_k   & 0&0&1&0&2&1&3&4 &5 &10 &11 &19\\
\dim_{\Q} \mathcal{Z}^{\mathcal{A}}_k  & 0&0&1&0&1&1&1&2&2&3&4&5\\
\end{array}$
\end{table}

From the above table, since $\phi_{\mathcal{A}}\big(V_k\big)=\{0\}$ and $\phi_{\mathcal{S}}\big(V_k\big)\subset \pi i \mathcal{Z}_{k-1}$, we see that up to weight 4
all linear relations among $\mathcal{A}$-MZVs and $\mathcal{S}$-MZVs are obtained from
linear relations of $\pp^h(1-q)^j \zeta_{\mathcal{Q}}(\kk)$'s.
In weight 5, there is a relation which does not come from relations of $\zeta_{\mathcal{Q}}(\kk)$'s.
Such relation is already detected in \cite[\S3.4]{BTT18}; for $\mathcal{F}\in \{\mathcal{A},\mathcal{S}\}$, it is
\begin{equation}\label{eq:missing_weight5}
\zeta_{\mathcal{F}}(4,1)-2\zeta_{\mathcal{F}}(3,1,1)=0.
\end{equation}
In other words, we expect that $\zeta_{\mathcal{Q}}(4,1)-2\zeta_{\mathcal{Q}}(3,1,1)\stackrel{?}{\not\in}(1-q)\mathcal{Z}^{\mathcal{O}}_{4} +\pp (1-q) \mathcal{Z}^{\mathcal{O}}_4$.

\subsection*{Examples of relations}

Using results we obtained in \S\ref{sec:rel}, we illustrate some examples of explicit relations and
compute upper bounds of the dimension of $\widetilde{\mathcal{Z}}^{\mathcal{O}}_k$ for $k=1,2,3$.
We remark that for each $k\ge1$ there exists a polynomial $c_k(x)\in \Q[x]$ of degree at most $k$ such that $\zeta_{\mathcal{Q}}(k)= c_k(\pp)(1-q)^k $
(see, e.g., \cite[Corollary 9.5.5]{Zhao16} and \cite[Remark 2.3]{BTT18}).

In weight 1, the identity \eqref{eq:cong_wt1} shows
\begin{equation}\label{eq:wt1_including}
\zeta_{\mathcal{Q}}(1)\in V_1=(1-q)\mathcal{Z}_0^{\mathcal{O}}+\pp (1-q)\mathcal{Z}^{\mathcal{O}}_0,
\end{equation}
so $\dim\widetilde{\mathcal{Z}}^{\mathcal{O}}_1= 0$.

In weight 2, a generator of $\widetilde{\mathcal{Z}}^{\mathcal{O}}_2$ is given by $\{\zeta_{\mathcal{Q}}(2),\zeta_{\mathcal{Q}}(1,1)\}$ and these satisfy
\begin{align}
\label{eq:wt2dep1} 0&=\zeta_{\mathcal{Q}}(2)+\frac{\pp^2-1}{12}(1-q)^2,\\
\notag 0&=2\zeta_{\mathcal{Q}}(2)+\zeta_{\mathcal{Q}}(1,1)+(1-q)\zeta_{\mathcal{Q}}(1) ,
\end{align}
where the last relation is obtained from Theorem \ref{thm:cyclic-sum} for the case $k=d=1$.
These relations imply $\dim \widetilde{\mathcal{Z}}^{\mathcal{O}}_2=0$.

In weight 3 we get
\begin{align*}
0&=\zeta_{\mathcal{Q}}(3)-\frac{\pp^2-1}{24}(1-q)^3,\\
0&=\zeta_{\mathcal{Q}}(2,1)+\zeta_{\mathcal{Q}}(1,2)+\zeta_{\mathcal{Q}}(1,1,1)+(1-q)\zeta_{\mathcal{Q}}(1,1),\\
0&=\zeta_{\mathcal{Q}}(3)+\zeta_{\mathcal{Q}}(2,1)+\zeta_{\mathcal{Q}}(1,2)+(1-q)\zeta_{\mathcal{Q}}(2)+\frac{\pp^2-1}{12}(1-q)^2\zeta_{\mathcal{Q}}(1),
\end{align*}
where the second relation is the case $k=d=2$ in Theorem \ref{thm:cyclic-sum} modulo $[\pp]$ and
the last relation can be deduced from \eqref{eq:wt2dep1}
by multipling by $\zeta_{\mathcal{Q}}(1)$ and using the $q$-stuffle product
\begin{align*}
\zeta_{\mathcal{Q}}(k_1)\zeta_{\mathcal{Q}}(k_2)=
\zeta_{\mathcal{Q}}(k_1,k_2)+\zeta_{\mathcal{Q}}(k_2,k_1)+\zeta_{\mathcal{Q}}(k_1+k_2)+
(1-q)\zeta_{\mathcal{Q}}(k_1+k_2-1).
\end{align*}
Hence, one gets $\dim \widetilde{\mathcal{Z}}^{\mathcal{O}}_3\le 1$.

\begin{remark}
Let us count the number of linearly independent relations obtained from the duality formula \eqref{eq:duality_Q} and the $q$-stuffle product.
For this, define the \emph{stuffle-star product}
$\star : \mathfrak{H}^1\otimes \mathfrak{H}^1 \rightarrow \mathfrak{H}^1$
inductively by
\begin{equation*}
y_kw\star y_lw'=y_k(w \star y_lw')+y_l(y_kw \star w')-y_{k+l}(w\star w')
\end{equation*}
for $w,w' \in \mathfrak{H}^1$ and $k,l\ge1$, with the initial condition $w \star 1=w=1\star w$.
By the $q$-stuffle-star product (see \cite[\S3.2]{BTT18} for the definition), for indices $\kk$ and $\boldsymbol{l}$ with $\wt(\kk)+\wt(\boldsymbol{l})=k$ it holds that
\[ \zeta_{\mathcal{Q}}^\star(y_{\kk})\zeta_{\mathcal{Q}}^\star(y_{\boldsymbol{l}})- \zeta_{\mathcal{Q}}^\star(y_{\kk}\star y_{\boldsymbol{l}})\in (1-q)\mathcal{Z}^{\mathcal{O}}_{k-1},\]
where $\zeta_{\mathcal{Q}}^\star$ is regarded as a $\Q$-linear map that sends $y_{\kk}$ to $\zeta_{\mathcal{Q}}^\star(\kk)$.
Hence, by \eqref{eq:duality_Q}, for any indices $\kk$ and $\boldsymbol{l}$ with $\wt(\kk)+\wt(\boldsymbol{l})=k$, one obtains
\[ \zeta_{\mathcal{Q}}^\star\big(y_{\kk}\star y_{\boldsymbol{l}} - (-1)^k y_{\overline{\kk^\vee}}\star y_{\overline{\boldsymbol{l}^\vee}}\big)\in (1-q)\mathcal{Z}^{\mathcal{O}}_{k-1}.\]
Furthermore, using \eqref{eq:wt1_including}, for any index $\kk$ of weight $k-1$, we see that
\[ \zeta_{\mathcal{Q}}^\star\big(y_{1}\star y_{\kk}\big) \in V_k.\]
Now consider the $\Q$-vector subspace of $\mathfrak{H}^1_k=\langle y_{\kk}\mid \kk\in\mathbb{I}_k\rangle_{\Q}$ given by
\begin{align*}
\mathfrak{R}_k&=\left\langle y_{\kk}\star y_{\boldsymbol{l}} - (-1)^k y_{\overline{\kk^\vee}}\star y_{\overline{\boldsymbol{l}^\vee}} \, \middle|\, (\kk,\boldsymbol{l})\in \bigcup_{l=1}^{k-1} \mathbb{I}_l\times \mathbb{I}_{k-l}\right\rangle_{\Q}\\
&+\langle y_{1}\star y_{\kk}\mid \kk\in \mathbb{I}_{k-1}\rangle_{\Q} +\left\langle y_{\kk}+(-1)^{\wt(\kk)} y_{\overline{\kk^\vee}} \, \middle|\, \kk \in \mathbb{I}_{k}\right\rangle_{\Q}.
\end{align*}
Since $\zeta_{\mathcal{Q}}^\star(w)\in V_k$ for any $w\in \mathfrak{R}_k$, it follows that
\[\dim \widetilde{\mathcal{Z}}_k^{\mathcal{O}} \le \dim \mathfrak{H}^1_k\big/ \mathfrak{R}_k .\]
The exact dimension of the right side can be computed up to certain weights and the list, which coincides with the above numerical dimension of $\widetilde{\mathcal{Z}}_k^{\mathcal{O}}$, is as follows.
\begin{table}[hbtp]
  \caption{Table of numerical dimensions}
  \label{dim}
$\begin{array}{c|cccccccccccccccccccccccccc}
k & 1&2&3&4&5&6&7&8&9&10&11&12\\ \hline
 \dim_{\Q} \mathfrak{H}^1_k\big/ \mathfrak{R}_k    & 0&0&1&0&2&1&3&4 &5 &10 &11 &19\\
\end{array}$
\end{table}
\end{remark}

\subsection{Variants of $\mathcal{Q}$-MZVs}\label{subsec:variants}
In this subsection, we introduce variants of $\zeta_{\mathcal{Q}}(\kk)$ and discuss how we obtain \eqref{eq:missing_weight5} from our theory of $\mathcal{Q}$-MZVs.

For an index $\kk=(k_1,\ldots,k_d)$ and $\boldsymbol{s}=(s_1,\ldots,s_d)\in \Z_{\ge0}^d$, we set
\[H_m(\kk;\boldsymbol{s};q)=\sum_{m\ge m_1>\cdots>m_d>0}
\prod_{a=1}^d \frac{q^{s_am_a}}{[m_a]^{k_a}},\]
and define
\[ \zeta_{\mathcal{Q}}(\kk;\boldsymbol{s})=\big( H_{p-1}(\kk;\boldsymbol{s};q)\mod [p]\big)_p\in \mathcal{Q}.\]
Note that $\zeta_{\mathcal{Q}}(k_1,\ldots,k_d;k_1-1,\ldots,k_d-1)=\zeta_{\mathcal{Q}}(k_1,\ldots,k_d)$ and $\zeta_{\mathcal{Q}}(k_1,\ldots,k_d;1,\ldots,1)=\overline{\zeta}_{\mathcal{Q}}(k_1,\ldots,k_d)$ .
For $k\ge1$, let $\mathcal{Z}^{\mathcal{Q}}_k$ be the $\Q$-vector space spanned by the set
\[ \{ \pp^h(1-q)^j \zeta_{\mathcal{Q}}(\kk;\boldsymbol{s})\mid 0\le h\le j\le k,\ \kk\in\mathbb{I}_{k-j},\ \boldsymbol{s}\in \Z^{\dep(\kk)}_{\ge0} ,\ \boldsymbol{s}\le \kk\} ,\]
where $(s_1,\ldots,s_d)\le (k_1,\ldots,k_d)$ means $s_i\le k_i$ for all $1\le i\le d$.
For example, the space $\mathcal{Z}^{\mathcal{Q}}_2$ is generated by
\begin{gather*}
 \zeta_{\mathcal{Q}}(2;0),\zeta_{\mathcal{Q}}(2;1),\zeta_{\mathcal{Q}}(2;2), \zeta_{\mathcal{Q}}(1,1;0,0),\zeta_{\mathcal{Q}}(1,1;0,1),\zeta_{\mathcal{Q}}(1,1;1,0),\zeta_{\mathcal{Q}}(1,1;1,1),\\
 (1-q)\zeta_{\mathcal{Q}}(1;0), \pp (1-q)\zeta_{\mathcal{Q}}(1;0),(1-q)\zeta_{\mathcal{Q}}(1;1),\pp (1-q)\zeta_{\mathcal{Q}}(1;1),\\
 (1-q)^2,\pp(1-q)^2,\pp^2(1-q)^2.
 \end{gather*}
As is indicated by the upper $\mathcal{Q}$, $\mathcal{Z}^{\mathcal{Q}}_k$ is not in $\mathcal{O}$ (for one, from \cite[Remark 2.11]{BTT18} we see that $\phi_{\mathcal{S}}(\zeta_{\mathcal{Q}}(1,1;0,1)) =\infty$),
while $\phi_{\mathcal{A}}\big(\zeta_{\mathcal{Q}}(\kk;\boldsymbol{s})\big)=\zeta_{\mathcal{Q}}(\kk)$ holds for any $\boldsymbol{s}\in \Z_{\ge0}^{\dep(\kk)}$.

Using these objects, we can observe
\[ \zeta_{\mathcal{Q}}(4,1;3,0)-\zeta_{\mathcal{Q}}(3,1,1;2,1,0)-\zeta_{\mathcal{Q}}(3,1,1;2,0,1)\stackrel{?}{\in}(1-q)\mathcal{Z}^{\mathcal{Q}}_{4} +\pp (1-q) \mathcal{Z}^{\mathcal{Q}}_4,\]
from which we can reprove \eqref{eq:missing_weight5} for the case $\mathcal{F}=\mathcal{A}$.
Although some of individual terms in the above relation do not converge under the map $\phi_{\mathcal{S}}$, we can check that main terms in the asymptotic expansion of the above relation are canceled and that the image under the map $\phi_{\mathcal{S}}$ (modulo $\pi i$) coincides with \eqref{eq:missing_weight5} for the case $\mathcal{F}=\mathcal{S}$.


For comparison, we exhibit the numerical dimension of
\[  \widetilde{\mathcal{Z}}^{\mathcal{Q}}_k = \mathcal{Z}^{\mathcal{Q}}_k\big/ \big((1-q)\mathcal{Z}^{\mathcal{Q}}_{k-1}+\pp (1-q) \mathcal{Z}^{\mathcal{Q}}_{k-1}\big).\]
Here is a table of dimensions of $\widetilde{\mathcal{Z}}^{\mathcal{Q}}_k$ over $\Q$ up to weight 5.
\begin{center}
$\begin{array}{c|cccccccccccccccccccccccccc}
k & 1&2&3&4&5&6&7 \\ \hline
\dim_{\Q} \widetilde{\mathcal{Z}}^{\mathcal{Q}}_k   & 0&1&2&2&6&8&16
\end{array}$
\end{center}

\begin{remark}
In a similar manner to the proof of Theorem \ref{thm:BTT} (ii), one can show that $\zeta_{\mathcal{Q}}(\kk;\{0\}^{\dep(\kk)})$ of weight $k$ lies in $\mathcal{Z}_k^{\mathcal{O}}$ (see also \cite[Theorem 3.8]{Tasaka21} with the case $N=1$).
Since
\[\frac{q^{sm}}{[m]^{k}}=\frac{q^{(s-1)m}}{[m]^{k}}-(1-q)\frac{q^{(s-1)m}}{[m]^{k-1}},\]
we see that elements $\zeta_{\mathcal{Q}}(\kk;\boldsymbol{s})$ of weight $k$ with $(\{0\}^{\dep(\kk)})\le \boldsymbol{s}< \kk$ lies in $\mathcal{Z}^{\mathcal{O}}_k$, where $(s_1,\ldots,s_d)< (k_1,\ldots,k_d)$ means $s_i<k_i$ for all $1\le i\le d$.
\end{remark}

\subsection{Relations of $\mathcal{Q}_2$-MZVs}
In this subsection, we give examples of relations among $\zeta_{\mathcal{Q}_2}(\kk)$'s, which are defined by
\[ \zeta_{\mathcal{Q}_2}(\kk) = \big( H_{p-1}(\kk;q)\mod [p]^2\big )_p \in \mathcal{Q}_2\cong \widehat{\mathcal{Q}}\big/[\pp]^2\widehat{\mathcal{Q}}.\]

As examples of relations, from Proposition \ref{prop:wt1_hatQ}, we obtain
\[ \zeta_{\mathcal{Q}_2}(1)- \frac{\pp-1}{2}(1-q) + \frac{1}{2}[\pp] \overline{\zeta}_{\mathcal{Q}_2}(2)=0.\]
Also, we see from Theorems \ref{thm:q-reversal} and \ref{thm:Hoffman's-duality} that for each index $\kk$
\begin{gather*}
(-1)^{\wt(\kk)}\overline{\zeta}_{\mathcal{Q}_2}(\kk) - \zeta_{\mathcal{Q}_2}(\overline{\kk})  -  [\pp] \sum_{\substack{\boldsymbol{l}\in \Z_{\ge0}^{\dep(\kk)}\\\wt(\boldsymbol{l})=1}} b\binom{\kk}{\boldsymbol{l}}\zeta_{\mathcal{Q}_2}(\overline{\kk+\boldsymbol{l}})=0,\\
 \zeta_{\mathcal{Q}_2}^\star(\kk)+\zeta_{\mathcal{Q}_2}^\star(\kk^\vee)+[\pp]\bigg( \zeta_{\mathcal{Q}_2}^\star(1,\kk)+\zeta_{\mathcal{Q}_2}^\star(1,\kk^\vee)-\frac{\pp+1}{2}(1-q)\zeta_{\mathcal{Q}_2}^\star(\kk)\bigg)=0.
\end{gather*}

In order to count the number of relations, for $k\ge1$, let $\mathcal{Z}_k^{\mathcal{O}_2}$ be the $\Q$-vector space spanned by the sets
\[ \{ \pp^h(1-q)^j \zeta_{\mathcal{Q}_2}(\kk)\mid 0\le h\le j\le k,\ \kk\in \mathbb{I}_{k-j}\}\]
and
\[ \{ [\pp] \pp^h(1-q)^j \zeta_{\mathcal{Q}_2}(\kk)\mid 0\le h\le j\le k+1,\ \kk\in \mathbb{I}_{k+1-j}\}.\]
For simplicity, we consider
\[  \widetilde{\mathcal{Z}}^{\mathcal{O}_2}_k = \mathcal{Z}^{\mathcal{O}_2}_k\big/ \big((1-q)\mathcal{Z}^{\mathcal{O}_2}_{k-1}+\pp (1-q) \mathcal{Z}^{\mathcal{O}_2}_{k-1}\big).\]
The list of numerical dimensions of $\widetilde{\mathcal{Z}}^{\mathcal{O}_2}_k$ up to weight 7, which is compared with the dimension of the $\Q$-vector space $\mathcal{Z}_k^{\mathcal{A}_2}$ spanned by $\zeta_{\mathcal{A}_2}(\kk)=(H_{p-1}(\kk)\mod p^2)_p\in \mathcal{A}_2$ of weight $k$ (see \cite[p.261]{Zhao16}), is as follows.

\begin{center}
$\begin{array}{c|cccccccccccccccccccccccccc}
k & 1&2&3&4&5&6&7 \\ \hline
\dim_{\Q} \widetilde{\mathcal{Z}}^{\mathcal{O}_2}_k   & 0&1&1&2&3&4&7\\
\dim_{\Q} {\mathcal{Z}}^{\mathcal{A}_2}_k   & 0&1&1&1&2&2&3
\end{array}$
\end{center}

\appendix

\section{Proof of Theorem \ref{thm:hatS_from_q}}\label{app:proof_of_main_result}

In this appendix, we give a proof of Theorem \ref{thm:hatS_from_q}.
Our first goal is to prove
\begin{equation}\label{eq:lim_formula}
\lim_{m\rightarrow \infty}  H_{m-1}(\kk;q_m(t)) = \widehat{\xi}(\kk),
\end{equation}
from which the equation \eqref{eq:hatS_from_q} for $\bullet =\emptyset$ follows, where $q_m(t)$ was defined to be the solution to \eqref{eq:qmt}.
The equation \eqref{eq:hatS_from_q} for $\bullet =\star$ is also obtained from \eqref{eq:lim_formula}, together with \eqref{eq:partial_franction_q} and Proposition \ref{prop:1-q}, so its proof is omitted.
Finally, we prove \eqref{eq:hatS_from_q2} in much the same way.
Throughout this section, for simplicity of notations, we write
\[\zeta_m=e^{2\pi i/m}.\]

\subsection{Asymptotic formulas}
Note that for each index $\kk$, one obtains the power series
\[  H_{m-1}(\kk;q_m(t))=\sum_{l \ge 0}\alpha_{l}(\kk; m)t^{l}\in\Q(\zeta_m)[[t]].\]
Our first task is to give an asymptotic formula for $\alpha_l(\kk;m)$ as $m\rightarrow \infty$.

For this, we remark that for each index $\kk$ the formula
\begin{equation}\label{eq:BTT}
\begin{aligned}
\alpha_0(\kk;m)&=H_{m-1}(\kk;\zeta_m)\\
&= \sum_{a=0}^{\dep(\kk)}(-1)^{\wt(\kk_a)}  \zeta^\ast\left(\overline{\kk_a};\overline{\gamma_m}\right)  \zeta^\ast\left(\kk^a;\gamma_m\right)+ O\left(\frac{\log^J m}{m} \right)
\end{aligned}
\end{equation}
is shown in \cite[\S2.3.2]{BTT18},
where $\gamma_{m}$ is defined
in terms Euler's constant $\gamma$ by
\begin{align*}
\gamma_m=\log\left(\frac{m}{\pi}\right)+\gamma -\frac{\pi i }{2}
\end{align*}
and $\overline{\gamma_{m}}$ is its complex conjugate.
We consider similar asymptotics for $l\ge1$.

Let us begin with a general result on explicit formulas for the coefficients of $t^l \ (l\ge0)$
in the Taylor expansion of $f(q_m(t))$ at $t=0$ for
any holomorphic function $f(q)$ at $q=\zeta_m$.

\begin{proposition}\label{prop:expansion}
For $l, j \ge 0$, we define the rational number $B_{l, j}$ by
\begin{align*}
 B_{l, j}=\frac{1}{j!}\frac{d^{j}}{dy^{j}}\left(\frac{y}{e^{y}-1}\right)^{l}\bigg|_{y=0}
\end{align*}
and set
\begin{align}
S_{l,j}(m)=\frac{1}{j!}\frac{d^{j}}{dy^{j}}(\zeta_{m} e^{y/m}-1)^{l}\bigg|_{y=0}.
\label{eq:def-SNL}
\end{align}
Suppose that a function $f(q)$ is holomorphic in a neighborhood of $q=\zeta_{m}$.
Set $f(q_{m}(t))=\sum_{l \ge 0}a_{l}t^{l}$, where $a_{l} \in \C$.
Then $a_{0}=f(\zeta_{m})$ and, for $l\ge 1$, it holds that
\begin{align*}
a_{l}=\frac{1}{l}\sum_{\substack{j_{1}+j_{2}+j_{3}=l-1 \\ j_{1}, j_{2}, j_{3} \ge 0}}
B_{l, j_{1}}\, S_{l, j_{2}}(m)\, \frac{1}{j_{3}!m^{j_{3}+1}} (\theta_{q}^{j_{3}+1}f)(\zeta_{m}),
\end{align*}
where $\theta_{q}$ is the theta operator
\begin{align*}
\theta_{q}=q\frac{d}{dq}.
\end{align*}
\end{proposition}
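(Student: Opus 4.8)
The plan is to reduce the formula to the classical Lagrange inversion theorem after a substitution that exhibits $q_m(t)$ as the functional inverse of an explicit power series. Assume $m\ge2$, so that $\zeta_m\ne1$. Put $q=\zeta_m e^{y/m}$; as $q$ runs over a neighbourhood of $\zeta_m$ the variable $y$ runs over a neighbourhood of $0$, and $q^{m}=\zeta_m^{m}e^{y}=e^{y}$. Writing $q_m(t)=\zeta_m e^{y(t)/m}$ with $y(0)=0$, the defining relation $[m]_{q_m(t)}=t$ of \eqref{eq:qmt} becomes
\[
t=\frac{1-q_m(t)^{m}}{1-q_m(t)}=\frac{e^{y(t)}-1}{\zeta_m e^{y(t)/m}-1}.
\]
Thus $y(t)$ is the inverse of $\psi(y)=(e^{y}-1)/(\zeta_m e^{y/m}-1)$, which satisfies $\psi(0)=0$ and $\psi'(0)=1/(\zeta_m-1)\ne0$; equivalently $y=t\,\phi(y)$ with
\[
\phi(y)=\frac{y}{\psi(y)}=\bigl(\zeta_m e^{y/m}-1\bigr)\,\frac{y}{e^{y}-1},\qquad \phi(0)=\zeta_m-1\ne0.
\]

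Next I would write $f(q_m(t))=H(y(t))$ with $H(y)=f(\zeta_m e^{y/m})$, so that $a_0=H(0)=f(\zeta_m)$, and for $l\ge1$ apply Lagrange inversion:
\[
a_l=[t^{l}]\,H(y(t))=\frac{1}{l}\,[y^{l-1}]\bigl(H'(y)\,\phi(y)^{l}\bigr).
\]
It then remains to expand the factors. By the definitions of $B_{l,j}$ and $S_{l,j}(m)$ one has $(y/(e^{y}-1))^{l}=\sum_{j\ge0}B_{l,j}\,y^{j}$ and $(\zeta_m e^{y/m}-1)^{l}=\sum_{j\ge0}S_{l,j}(m)\,y^{j}$, hence $\phi(y)^{l}=\sum_{j_1,j_2\ge0}B_{l,j_1}S_{l,j_2}(m)\,y^{j_1+j_2}$. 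For $H'$, the key point is the operator identity: if $G(y)=g(\zeta_m e^{y/m})$ then $G'(y)=m^{-1}(\theta_q g)(\zeta_m e^{y/m})$, whence $G^{(j)}(0)=m^{-j}(\theta_q^{j}g)(\zeta_m)$; taking $g=f$ gives $[y^{j_3}]\,H'(y)=\frac{1}{j_3!\,m^{j_3+1}}(\theta_q^{j_3+1}f)(\zeta_m)$.

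Multiplying these three series and reading off the coefficient of $y^{l-1}$ yields exactly the sum over $j_1+j_2+j_3=l-1$ in the statement, completing the proof. The steps are routine; the only points requiring care are the legitimacy of Lagrange inversion --- which is ensured because $q_m(t)$ is an honest holomorphic function near $t=0$ with $q_m(0)=\zeta_m$ (since $dt/dq\ne0$ at $q=\zeta_m$), so $f(q_m(t))$ is holomorphic near $t=0$ --- and the bookkeeping in the operator identity relating $d/dy$ and $\theta_q=q\,d/dq$ under $q=\zeta_m e^{y/m}$. I do not expect a genuine obstacle here; the main subtlety is simply to choose the substitution $q=\zeta_m e^{y/m}$ so that the factors $(y/(e^{y}-1))^{l}$ and $(\zeta_m e^{y/m}-1)^{l}$ --- and hence $B_{l,j}$ and $S_{l,j}(m)$ --- appear on the nose.
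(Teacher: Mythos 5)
Your proposal is correct and follows essentially the same route as the paper: the substitution $q=\zeta_m e^{y/m}$, the function $\psi(y)=(e^{y}-1)/(\zeta_m e^{y/m}-1)$ whose inverse gives $y(t)$, the factorization of $\phi(y)^{l}$ into the two series defining $B_{l,j}$ and $S_{l,j}(m)$, and the identity $\frac{d}{dy}f(\zeta_m e^{y/m})=m^{-1}(\theta_q f)(\zeta_m e^{y/m})$ all appear verbatim in the paper's argument. The only cosmetic difference is that you invoke Lagrange--B\"urmann inversion as a black box, whereas the paper derives that step by hand via the residue change of variables $t=\psi_m(y)$ and an integration by parts.
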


\begin{proof}
By definition we have $a_{0}=f(q_{m}(0))=f(\zeta_{m})$.
Suppose that $l \ge 1$.
Consider the function $\psi_{m}(y)=(1-e^{y})/(1-\zeta_{m}e^{y/m})$.
Since $d[m]_q/dq$ is non-zero at $q=\zeta_m$, we have
$q_{m}(\psi_m(y))=\zeta_{m}e^{y/m}$.
Note that $\psi_{m}(0)=0$ and $\psi_{m}'(0)\not=0$.
Changing the variable $t$ to $y$ by $t=\psi_{m}(y)$, we see that
\begin{align*}
a_{l}=
\mathrm{Res}_{t=0}\, t^{-l-1}f(q_{m}(t)) \, dt=
\mathrm{Res}_{y=0}\, \psi'_{m}(y)\psi_{m}(y)^{-l-1}f(\zeta_{m}e^{y/m})\, dy.
\end{align*}
The right side is equal to
\begin{align*}
\mathrm{Res}_{y=0}\, (-\psi_{m}(y)^{-l}/l)' \, f(\zeta_{m}e^{y/m})\, dy=
\frac{1}{l}\,
\mathrm{Res}_{y=0}\, \left\{
\psi_{m}(y)^{-l} \, \left(\frac{d}{dy}f(\zeta_{m}e^{y/m})\right)
\right\}dy
\end{align*}
Since $\psi_{m}(y)^{-l}$ has an $l$-th order pole at $y=0$, it is equal to
\begin{align*}
&
\frac{1}{l!}\frac{d^{l-1}}{dy^{l-1}}
\left\{
y^{l} \left(\frac{\zeta_{m}e^{y/m}-1}{e^{y}-1}\right)^{l} \,
\left( \frac{d}{dy} f(\zeta_{m} e^{y/m}) \right)
\right\}\bigg|_{y=0} \\
&=\frac{1}{l}
\sum_{\substack{j_{1}+j_{2}+j_{3}=l-1 \\ j_{1}, j_{2}, j_{3} \ge 0}}
B_{l, j_{1}}\, S_{l, j_{2}}(m)\,\frac{1}{j_{3}!}
\left( \frac{d^{j_{3}+1}}{dy^{j_{3}+1}} f(\zeta_{m}e^{y/m}) \right)\bigg|_{y=0}.
\end{align*}
Now the desired formula follows from the relation
\begin{align*}
\frac{d^{k}}{dy^{k}} f(\zeta_{m}e^{y/m})=\frac{1}{m^k}\, (\theta_{q}^{k}f)(\zeta_{m}e^{y/m})
\end{align*}
for $k \ge 0$.
\end{proof}

\begin{proposition}\label{prop:1-q}
Set
\begin{align}
1-q_{m}(t)=-\frac{2\pi i}{m}\sum_{l \ge 0}\alpha_{l}(m)t^{l},
\label{eq:1-q}
\end{align}
where $\alpha_{l}(m) \in \C$.
Then it holds that
$\alpha_{0}(m)=1+O(m^{-1})$ and
$\alpha_{l}(m)=O(m^{-l})$ for $l \ge 1$ as $m \to \infty$.
\end{proposition}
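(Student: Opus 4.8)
The plan is to feed $f(q)=1-q$ into Proposition \ref{prop:expansion}. Since $\theta_q(1-q)=-q$ and hence $\theta_q^{k}(1-q)=-q$ for every $k\ge1$, all the derivative data there collapses to the single value $(\theta_q^{j_3+1}f)(\zeta_m)=-\zeta_m$. Writing $1-q_m(t)=\sum_{l\ge0}a_l t^l$, Proposition \ref{prop:expansion} gives $a_0=f(\zeta_m)=1-\zeta_m$ and, for $l\ge1$,
\[
a_l=-\frac{\zeta_m}{l}\sum_{\substack{j_1+j_2+j_3=l-1\\ j_1,j_2,j_3\ge0}}B_{l,j_1}\,S_{l,j_2}(m)\,\frac{1}{j_3!\,m^{j_3+1}}.
\]
Comparing with \eqref{eq:1-q}, that is, $a_l=-\tfrac{2\pi i}{m}\alpha_l(m)$, reduces the proposition to order estimates. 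The case $l=0$ is immediate: $1-\zeta_m=-\tfrac{2\pi i}{m}+O(m^{-2})$ gives $\alpha_0(m)=-\tfrac{m}{2\pi i}(1-\zeta_m)=1+O(m^{-1})$.

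For $l\ge1$ the key step is to bound $S_{l,j}(m)$. I would write $\zeta_m e^{y/m}-1=(\zeta_m-1)+\zeta_m(e^{y/m}-1)$: viewed as a power series in $y$, its constant term is $\zeta_m-1=O(m^{-1})$ and its $y^k$-coefficient for $k\ge1$ is $\zeta_m/(k!\,m^k)=O(m^{-1})$, uniformly in $k\ge1$ and $m\ge1$. Raising to the $l$-th power, each Taylor coefficient at $y=0$ of $(\zeta_m e^{y/m}-1)^l$ is a finite sum of products of $l$ such coefficients, hence $O(m^{-l})$; in particular $S_{l,j}(m)=O(m^{-l})$ for each fixed $j$, the implied constant depending only on $l$ and $j$. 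Since $B_{l,j_1}$ is $m$-independent, every summand of the displayed formula for $a_l$ is $O\!\big(m^{-l}\cdot m^{-(j_3+1)}\big)=O(m^{-l-1})$, so $a_l=O(m^{-l-1})$ and $\alpha_l(m)=-\tfrac{m}{2\pi i}a_l=O(m^{-l})$, as required.

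Everything here is routine; the only place that warrants a second of care is that, for a fixed weight $l$ and a fixed order $j$, only finitely many coefficients of $\zeta_m e^{y/m}-1$ actually enter the computation, so the crude $O(m^{-1})$ bound per coefficient is enough and no genuine uniformity issue appears. In short, the whole argument is the substitution $f(q)=1-q$ into Proposition \ref{prop:expansion} followed by these bounds, and I do not expect any serious obstacle beyond keeping the combinatorial bookkeeping straight.
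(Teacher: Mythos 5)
Your proof is correct and follows the paper's route: set $t=0$ for the $l=0$ case, substitute $f(q)=1-q$ into Proposition \ref{prop:expansion} (using $\theta_q^{k}(1-q)=-q$ for $k\ge1$), and then bound each summand by showing $S_{l,j}(m)=O(m^{-l})$. The only difference is in that last sub-step: the paper invokes Lemma \ref{lem:SNL}, which gives the precise leading asymptotics of $S_{l,j}(m)$ via the theta operator and Stirling numbers, whereas you obtain the (sufficient) upper bound directly from the Taylor coefficients of $\zeta_m e^{y/m}-1=(\zeta_m-1)+\zeta_m(e^{y/m}-1)$ — a slightly more elementary but entirely adequate argument; your retention of the full sum over $j_1+j_2+j_3=l-1$ is also the correct application of Proposition \ref{prop:expansion}.
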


For the proof, we use the following lemma.
\begin{lemma}\label{lem:SNL}
For $l \ge j \ge 0$, it holds that
\begin{align*}
S_{l, j}(m)=\frac{1}{m^l}\binom{l}{j}(2\pi i)^{l-j}(1+O(m^{-1})) \qquad
(m \to \infty).
\end{align*}
\end{lemma}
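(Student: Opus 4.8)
The plan is to isolate the elementary zero of $\zeta_m e^{y/m}-1$ and reduce the whole computation to the Taylor expansion of a fixed entire function. First I would rewrite, using $\zeta_m=e^{2\pi i/m}$,
\[
\zeta_m e^{y/m}-1=e^{(2\pi i+y)/m}-1=\frac{2\pi i+y}{m}\,h_0\!\left(\frac{2\pi i+y}{m}\right),
\]
where $h_0(z)=(e^z-1)/z$ is entire with $h_0(0)=1$. Putting $h(z)=h_0(z)^l$ (again entire, with $h(0)=1$) and raising to the $l$-th power,
\[
(\zeta_m e^{y/m}-1)^l=\frac{(2\pi i+y)^l}{m^l}\,h\!\left(\frac{2\pi i+y}{m}\right).
\]

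Next I would apply $\tfrac{1}{j!}\tfrac{d^{j}}{dy^{j}}\big|_{y=0}$ to this identity and expand by the Leibniz rule. Since $j\le l$, one has $\tfrac{d^{i}}{dy^{i}}(2\pi i+y)^{l}\big|_{y=0}=\tfrac{l!}{(l-i)!}(2\pi i)^{l-i}$ for $0\le i\le j$, while $\tfrac{d^{r}}{dy^{r}}h\big(\tfrac{2\pi i+y}{m}\big)\big|_{y=0}=m^{-r}h^{(r)}(2\pi i/m)$, so
\[
m^{l}\,j!\,S_{l,j}(m)=\frac{l!}{(l-j)!}(2\pi i)^{l-j}\,h(2\pi i/m)+\sum_{i=0}^{j-1}\binom{j}{i}\frac{l!}{(l-i)!}(2\pi i)^{l-i}\,\frac{h^{(j-i)}(2\pi i/m)}{m^{j-i}}.
\]

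Finally I would invoke continuity of $h$ and its derivatives at the origin: as $m\to\infty$, $h(2\pi i/m)=1+O(m^{-1})$ and $h^{(r)}(2\pi i/m)=O(1)$ for every fixed $r\ge0$. Thus the first term equals $\tfrac{l!}{(l-j)!}(2\pi i)^{l-j}\big(1+O(m^{-1})\big)$, and each summand of the remaining sum carries a factor $m^{-(j-i)}$ with $j-i\ge1$, hence that sum is $O(m^{-1})$. Dividing by $j!\,m^{l}$ and using $\tfrac{1}{j!}\tfrac{l!}{(l-j)!}=\binom{l}{j}$ yields $S_{l,j}(m)=m^{-l}\big(\binom{l}{j}(2\pi i)^{l-j}+O(m^{-1})\big)$, which is the asserted $m^{-l}\binom{l}{j}(2\pi i)^{l-j}(1+O(m^{-1}))$ because $\binom{l}{j}(2\pi i)^{l-j}\neq0$ for $0\le j\le l$. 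I expect no genuine obstacle here; the only step deserving a line of justification is the validity of these $O$-estimates, and that is immediate since $h$ is a power of the entire function $(e^z-1)/z$, so all of its derivatives are continuous at $z=0$.
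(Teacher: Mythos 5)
Your proof is correct. It takes a genuinely different computational route from the paper's: the paper substitutes $q=\zeta_m e^{y/m}$ to convert $\frac{d}{dy}$ into $\frac{1}{m}\theta_q$, expands $\theta_q^j=\sum_{n}\stirlingtwo{j}{n}q^n\frac{d^n}{dq^n}$ via Stirling numbers of the second kind, evaluates $q^n\frac{d^n}{dq^n}(q-1)^l$ at $q=\zeta_m$ explicitly, and then identifies the $n=j$ term (with $\stirlingtwo{j}{j}=1$) as dominant because the remaining terms carry extra powers of $\zeta_m-1=O(m^{-1})$. You instead factor out the zero analytically, writing $\zeta_m e^{y/m}-1=\frac{2\pi i+y}{m}\,h_0\bigl(\frac{2\pi i+y}{m}\bigr)$ with $h_0(z)=(e^z-1)/z$ entire, and apply the Leibniz rule; the $i=j$ term is dominant because every other term acquires a factor $m^{-(j-i)}$ from differentiating the smooth factor. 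The two arguments locate the same leading contribution (all $j$ derivatives landing on the ``polynomial'' part), but yours avoids the Stirling-number identity for $\theta_q^j$ entirely and makes the error structure visible directly from Leibniz, so it is somewhat more self-contained; the paper's version has the stylistic advantage of reusing the $\theta_q$/Stirling machinery that reappears in Lemma \ref{lem:theta-[m]} and the surrounding computations. Your final remark that the absorption of the additive $O(m^{-1})$ into a multiplicative $(1+O(m^{-1}))$ uses $\binom{l}{j}(2\pi i)^{l-j}\neq 0$ is a point the paper leaves implicit, and it is good that you made it explicit.
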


\begin{proof}
Substituting $q=\zeta_me^{y/m}$ into \eqref{eq:def-SNL}, we have
\begin{align*}
S_{l, j}(m)=\frac{1}{j! m^j}\theta_{q}^{j}(q-1)^{l}\big|_{q=\zeta_{m}}.
\end{align*}
It holds that
\begin{align*}
\theta_{q}^{j}=\sum_{n=1}^{j}
\stirlingtwo{j}{n}q^{n}\frac{d^{n}}{dq^{n}},
\end{align*}
where $\stirlingtwo{j}{n}$ is the Stirling number of the second kind defined by
\begin{align*}
\stirlingtwo{j}{0}=\stirlingtwo{0}{j}=\delta_{j, 0}, \qquad
\stirlingtwo{j+1}{n}=\stirlingtwo{j}{n-1}+n \stirlingtwo{j}{n}.
\end{align*}
Note that $\stirlingtwo{j}{j}=1$.
Therefore
\begin{align*}
S_{l, j}(m)&=\frac{1}{j! m^j}\sum_{n=1}^{j}
\stirlingtwo{j}{n}q^{n}\frac{d^{n}}{dq^{n}}(q-1)^{l}\big|_{q=\zeta_{m}}\\
&=\frac{1}{j! m^j}\sum_{n=1}^{j}
\stirlingtwo{k}{n}q^{n}l(l-1)\cdots (l-n+1)(q-1)^{l-n}\big|_{q=\zeta_{m}}\\
&=\frac{1}{j! m^j}\sum_{n=1}^{j}n!
\stirlingtwo{j}{n}\binom{l}{n}\zeta_m^{n}(\zeta_m-1)^{l-n}.
\end{align*}
Since $\zeta_m=1+\frac{2\pi i}{m} +O(m^{-2})$ as $m\rightarrow \infty$ and the coefficient of $\zeta_m^{n}(\zeta_m-1)^{l-n}$ in the above does not depend on $m$, we have
\begin{align*}
&=\frac{1}{j! m^j}\sum_{n=1}^{j}n!\stirlingtwo{j}{n}
\binom{l}{n}\left(\frac{2\pi i}{m}\right)^{l-n}(1+O(m^{-1}))\\
&=\frac{1}{m^j}
\binom{l}{j}\left(\frac{2\pi i}{m}\right)^{l-j}(1+O(m^{-1})),
\end{align*}
from which the statement follows.
\end{proof}

\begin{proof}[Proof of Proposition \ref{prop:1-q}]
Setting $t=0$ in \eqref{eq:1-q},
we obtain $1-\zeta_{m}=-\frac{2\pi i}{m} \alpha_{0}(m)$.
Hence $\alpha_{0}(m)=1+O(m^{-1})$.
For $l \ge 1$, applying Proposition \ref{prop:expansion} to the case $f(q)=1-q$,
we see that
\begin{align*}
-\frac{2\pi i }{m} \alpha_{l}(m)=\frac{1}{l}
\sum_{\substack{j_{1}+j_{2}=l-1 \\ j_{1}, j_{2} \ge 0}}
B_{l, j_{1}}\,S_{l, j_{2}}(m)\, \frac{-\zeta_m}{m}.
\end{align*}
The above expression and Lemma \ref{lem:SNL} imply $\alpha_{l}(m)=O(m^{-l})$.
\end{proof}

\begin{remark}\label{rem:taylor_exp_qmt}
In this paper, we only use the asymptotic formula for $q_m(t)$ described in Proposition \ref{prop:1-q}.
However, using the standard technique, one can compute the Taylor expansion of $q_m(t)$ at $t=0$.
It is given by
\[ q_m(t)= \zeta_m \sum_{l\ge0} t^l \sum_{j=0}^l \frac{(-\zeta_m)^j}{(j+1)!(l-j)!}\left(-\frac{j+1}{m}\right)_l,\]
where $\left(a\right)_l=a(a+1)\cdots (a+l-1)$.
\end{remark}

For an index $\kk=(k_{1}, \ldots , k_{d})$ and a positive integer $m$, we set
\begin{align*}
\tilde{H}_{m-1}(\kk; q)&=\left(-\frac{m}{2\pi i}(1-q)\right)^{-\wt(\kk)}H_{m-1}(\kk;q)\\
&=\sum_{m>m_{1}>\cdots >m_{d}>0}
\prod_{a=1}^{d}\left(-\frac{2\pi i}{m}\right)^{k_{a}}
\frac{q^{(k_{a}-1)m_{a}}}{(1-q^{m_{a}})^{k_{a}}}
\end{align*}
and write
\begin{align*}
\tilde{H}_{m-1}(\kk; q_{m}(t))=\sum_{l=0}^{\infty}\tilde{\alpha}_{l}(\kk; m)t^{l}.
\end{align*}
Proposition \ref{prop:1-q} implies that
\begin{align}
H_{m-1}(\kk;q_m(t))=\tilde{H}_{m-1}(\kk; q_{m}(t))
\left(1+\sum_{l \ge 0}O(m^{-1})\, t^{l} \right).
\label{eq:z-to-ztilde}
\end{align}
Thus, we have
\[ \alpha_0(\kk;m)=\tilde{\alpha}_0(\kk;m)\big(1+O(m^{-1})\big) \quad (m\rightarrow \infty)\]
and \eqref{eq:BTT} gives the asymptotic for $\tilde{\alpha}_0(\kk;m)$.
It also follows from \eqref{eq:z-to-ztilde} that, if the limit $\displaystyle\lim_{m\rightarrow\infty}\tilde{\alpha}_{l}(\kk; m)$ exists, we get
\begin{equation}\label{eq:lim_alpha}
\lim_{n\rightarrow\infty}\tilde{\alpha}_{l}(\kk; m)= \lim_{n\rightarrow\infty}\alpha_{l}(\kk; m).
\end{equation}

Now consider the asymptotics of $\tilde{\alpha}_{l}(\kk; m)$ for $l \ge 1$.
{}Using Proposition \ref{prop:expansion} and then Lemma \ref{lem:SNL},
we see that
\begin{equation}
\begin{aligned}
\tilde{\alpha}_{l}(\kk; m)&=\frac{1}{l}
\sum_{\substack{j_{1}+j_{2}+j_{3}=l-1 \\ j_{1}, j_{2}, j_{3} \ge 0}}
B_{l, j_{1}} \,S_{l, j_{2}}(m)\, \frac{1}{j_{3}!m^{j_3+1}}
(\theta_{q}^{j_{3}+1}\tilde{H}_{m-1})(\kk; \zeta_{m})\\
&=\frac{1}{l}
\sum_{\substack{j_{1}+j_{2}+j_{3}=l-1 \\ j_{1}, j_{2}, j_{3} \ge 0}}
B_{l, j_{1}} \,\frac{1}{m^l}\binom{l}{j_2}(2\pi i)^{l-j_2}(1+O(m^{-1})) \\
 &\times \frac{j_3+1}{m^{j_3+1}}  \sum_{\substack{l_1+\cdots+l_d=j_3+1\\l_1,\ldots,l_d\ge0}}
\sum_{m>m_1>\cdots >m_d>0} \prod_{a=1}^{d}\left(-\frac{2\pi i}{m}\right)^{k_{a}}\frac{1}{l_a!}\theta_q^{l_a}
\frac{q^{(k_{a}-1)m_{a}}}{(1-q^{m_{a}})^{k_{a}}}\bigg|_{q=\zeta_m}.
\end{aligned}
\label{eq:ztilde-expansion1}
\end{equation}

\begin{lemma}\label{lem:theta-[m]}
For $l \ge 0$ and $k, m \ge 1$, it holds that
\begin{align*}
\frac{1}{l!}\,\theta_{q}^{l}\frac{q^{(k-1)m}}{(1-q^{m})^{k}}=m^{l}\sum_{s=0}^{l}T_{s, l}(k)
\binom{s+k-1}{s}
\frac{q^{(k+s-1)m}}{(1-q^{m})^{k+s}},
\end{align*}
where
\begin{align*}
T_{s, l}(k)=\frac{s!}{l!}\sum_{a=s}^{l}\binom{l}{a}\stirlingtwo{a}{s}(k-1)^{l-a}.
\end{align*}
\end{lemma}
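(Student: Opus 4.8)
The plan is to reduce the statement to a combinatorial identity by the substitution $u=q^{m}$. Since $\theta_{q}u=q\cdot mq^{m-1}=mu$, the operator $\theta_{q}$ acts on any rational function of $u$ as $m\,\theta_{u}$ with $\theta_{u}=u\,d/du$, so that
\[
\theta_{q}^{l}\,\frac{q^{(k-1)m}}{(1-q^{m})^{k}}=m^{l}\,\theta_{u}^{l}\,\frac{u^{k-1}}{(1-u)^{k}}.
\]
It therefore suffices to establish the identity
\[
\frac{1}{l!}\,\theta_{u}^{l}\,\frac{u^{k-1}}{(1-u)^{k}}=\sum_{s=0}^{l}T_{s,l}(k)\binom{s+k-1}{s}\,\frac{u^{k+s-1}}{(1-u)^{k+s}}.
\]
First I would expand both sides as formal power series in $u$ using $\dfrac{u^{j-1}}{(1-u)^{j}}=\sum_{i\ge j-1}\binom{i}{j-1}u^{i}$ and $\theta_{u}^{l}u^{i}=i^{l}u^{i}$. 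Comparing the coefficient of $u^{j}$ for $j\ge k-1$ then reduces the claim to
\[
j^{l}\binom{j}{k-1}=l!\sum_{s=0}^{l}T_{s,l}(k)\binom{s+k-1}{s}\binom{j}{k+s-1},
\]
where the terms with $s>j-k+1$ vanish.

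The next step is to rewrite the right-hand side by the subset-of-a-subset identity $\binom{j}{k+s-1}\binom{s+k-1}{k-1}=\binom{j}{k-1}\binom{j-k+1}{s}$, together with $\binom{s+k-1}{s}=\binom{s+k-1}{k-1}$. Cancelling the nonzero factor $\binom{j}{k-1}$, the assertion becomes the polynomial identity $j^{l}=l!\sum_{s}T_{s,l}(k)\binom{j-k+1}{s}$, which it is enough to check for all integers $j\ge k-1$. Writing $n=j-k+1$ and substituting $l!\,T_{s,l}(k)=s!\sum_{a=s}^{l}\binom{l}{a}\stirlingtwo{a}{s}(k-1)^{l-a}$, the right-hand side equals
\[
\sum_{a=0}^{l}\binom{l}{a}(k-1)^{l-a}\sum_{s=0}^{a}\stirlingtwo{a}{s}\,s!\binom{n}{s}.
\]
The basic property of the Stirling numbers of the second kind, $\sum_{s}\stirlingtwo{a}{s}s!\binom{n}{s}=n^{a}$, collapses the inner sum, and the binomial theorem then gives $\sum_{a}\binom{l}{a}(k-1)^{l-a}n^{a}=(n+k-1)^{l}=j^{l}$, completing the argument.

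There is no conceptual difficulty in this approach; the only care needed is in the interchange of the summations over $a$ and $s$ and in keeping the summation ranges consistent when comparing coefficients. If one prefers to avoid power series, an equivalent route is induction on $l$ based on the recursion $\theta_{u}\dfrac{u^{j-1}}{(1-u)^{j}}=(j-1)\dfrac{u^{j-1}}{(1-u)^{j}}+j\dfrac{u^{j}}{(1-u)^{j+1}}$: writing $\theta_{u}^{l}\bigl(u^{k-1}(1-u)^{-k}\bigr)=\sum_{s}c_{s,l}(k)\,u^{k+s-1}(1-u)^{-k-s}$ one obtains $c_{s,l+1}(k)=(k+s-1)\bigl(c_{s,l}(k)+c_{s-1,l}(k)\bigr)$, and a short computation with the recursion $\stirlingtwo{a+1}{s}=\stirlingtwo{a}{s-1}+s\,\stirlingtwo{a}{s}$ confirms that $c_{s,l}(k)=l!\,T_{s,l}(k)\binom{s+k-1}{s}$ satisfies it, with the correct initial values $c_{0,0}(k)=1$ and $c_{s,0}(k)=0$ for $s\ge1$.
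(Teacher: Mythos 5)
Your argument is correct, and it reaches the same underlying Stirling-number combinatorics as the paper by a genuinely different route. The paper's proof computes the exponential generating function $\sum_{l\ge 0}\frac{X^{l}}{l!}\theta_{q}^{l}\frac{q^{(k-1)m}}{(1-q^{m})^{k}}$ in closed form as $e^{(k-1)mX}q^{(k-1)m}(1-q^{m}e^{mX})^{-k}$ (assuming $|q|<1$ to justify the termwise manipulations), expands $(1-q^{m}e^{mX})^{-k}$ in powers of $e^{mX}-1$, and uses the generating function $(e^{mX}-1)^{s}=s!\sum_{j}\stirlingtwo{j}{s}(mX)^{j}/j!$ before extracting the coefficient of $X^{l}$. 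You instead fix $l$, strip out the factor $m^{l}$ at the outset via the substitution $u=q^{m}$ (a clean simplification the paper does not make), compare Taylor coefficients of $u^{j}$ on both sides, and collapse the resulting double sum with the classical expansion $n^{a}=\sum_{s}\stirlingtwo{a}{s}\,s!\binom{n}{s}$ followed by the binomial theorem. What your version buys is the avoidance of the auxiliary generating variable $X$ and of the analytic hypothesis $|q|<1$: the identity is verified as an equality of formal power series, reduced to a polynomial identity in $j$ checked at infinitely many integers. Your alternative induction on $l$ via the recursion $c_{s,l+1}(k)=(k+s-1)\bigl(c_{s,l}(k)+c_{s-1,l}(k)\bigr)$ is also sound and is arguably the most self-contained of the three arguments. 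The only point worth making explicit in a written version is the one you already flag: when comparing coefficients of $u^{j}$, the terms with $s>j-k+1$ contribute $0$ on both sides, so the cancellation of $\binom{j}{k-1}$ and the passage to the identity $j^{l}=l!\sum_{s}T_{s,l}(k)\binom{j-k+1}{s}$ is legitimate for every $j\ge k-1$.
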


\begin{proof}
We may assume that $|q|<1$.
We calculate the generating function:
\begin{align*}
&
\sum_{l \ge 0}\frac{X^{l}}{l!}\theta_{q}^{l}\frac{q^{(k-1)m}}{(1-q^{m})^{k}}=
\sum_{l, j \ge 0}\frac{X^{l}}{l!}\binom{k+j-1}{j} \theta_{q}^{l} q^{(k+j-1)m} \\
&=\sum_{l, j \ge 0}\frac{((k+j-1)mX)^{l}}{l!}\binom{k+j-1}{j} q^{(k+j-1)m} \\
&=\sum_{j \ge 0}\binom{k+j-1}{j} q^{(k+j-1)m}e^{(k+j-1)mX}=
e^{(k-1)mX}\frac{q^{(k-1)m}}{(1-q^{m}e^{mX})^{k}}.
\end{align*}
Now compute
\begin{align*}
\frac{q^{(k-1)m}}{(1-q^{m}e^{mX})^{k}}&=
\frac{q^{(k-1)m}}{(1-q^{m})^{k}}\left(1-\frac{q^{m}}{1-q^{m}}(e^{mX}-1)\right)^{-k} \\
&=\sum_{s=0}^{\infty}\binom{k+s-1}{s}\frac{q^{(k+s-1)m}}{(1-q^{m})^{k+s}}
(e^{mX}-1)^{s}  \\
&=\sum_{s=0}^{\infty}\binom{k+s-1}{s}\frac{q^{(k+s-1)m}}{(1-q^{m})^{k+s}}
\sum_{j=s}^{\infty}\stirlingtwo{j}{s}\frac{s!}{j!}(mX)^{j} \\
&=\sum_{j=0}^{\infty}(mX)^{j}
\sum_{s=0}^{j}\binom{k+s-1}{s}\frac{q^{(k+s-1)m}}{(1-q^{m})^{k+s}}
\stirlingtwo{j}{s}\frac{s!}{j!},
\end{align*}
where for the third equality we have used \cite[Eq.(7)]{ArakawaIbukiyamaKaneko}.
Thus, we obtain
\begin{align*}
\frac{1}{l!}\,\theta_{q}^{l}\frac{q^{(k-1)m}}{(1-q^{m})^{k}}
&=\sum_{\substack{j_{1}+j_{2}=l \\ j_{1}, j_{2} \ge 0}}
\frac{((k-1)m)^{j_{1}}}{j_{1}!}
m^{j_{2}}
\sum_{s=0}^{j_{2}}\binom{k+s-1}{s}\frac{q^{(k+s-1)m}}{(1-q^{m})^{k+s}}
\stirlingtwo{j_{2}}{s}\frac{s!}{j_{2}!} \\
&=m^{l}\sum_{j=0}^{l}\frac{1}{l!}\binom{l}{j}
(k-1)^{l-j}
\sum_{s=0}^{j}\binom{k+s-1}{s}\frac{q^{(k+s-1)m}}{(1-q^{m})^{k+s}}
\stirlingtwo{j}{s}s! \\
&=m^{l}\sum_{s=0}^{l}
\binom{k+s-1}{s}\frac{q^{(k+s-1)m}}{(1-q^{m})^{k+s}}
T_{s, l}(k),
\end{align*}
which is the desired formula.
\end{proof}

Applying Lemma \ref{lem:theta-[m]} to \eqref{eq:ztilde-expansion1},
we see that
\begin{equation}
\begin{aligned}
 \tilde{\alpha}_{l}(\kk; m)&=\sum_{\substack{j_{1}, j_{2} \ge 0\\ \bl=(l_1,\ldots,l_d)\in \Z_{\ge0}^d\\ j_{1}+j_{2}+\wt(\bl)=l  }}
\frac{l-j_{1}-j_{2}}{l}B_{l, j_{1}}\,
\frac{1}{m^{l}}\binom{l}{j_{2}}(2\pi i)^{l-j_{2}}(1+O(m^{-1}))
 \\
&\times
\sum_{\substack{\bs =(s_1,\ldots,s_d)\in \Z_{\ge0}^d \\ l_{a} \ge s_{a} \ge 0 \\ (1\le a \le d)}}
\left(-\frac{m}{2\pi i}\right)^{\wt(\bs)}
\left\{\prod_{a=1}^{d}\binom{k_{a}+s_{a}-1}{s_{a}}T_{s_{a}, l_{a}}(k_{a})\right\}
Z_{m-1}(\bl; \kk+\bs),
\end{aligned}
\label{eq:ctilde-1}
\end{equation}
where $Z_{m-1}(\bl; \kk)$ is defined by
\begin{equation}\label{eq:def_Z}
Z_{m-1}(\bl; \kk)=\sum_{m-1\ge m_{1}>\cdots >m_{d}>0}
\prod_{a=1}^{d}
\left(\frac{m_{a}}{m}\right)^{l_{a}}
\left(-\frac{2\pi i}{m}\right)^{k_{a}}
\frac{\zeta_{m}^{(k_{a}-1)m_{a}}}{(1-\zeta_{m}^{m_{a}})^{k_{a}}}
\end{equation}
for $\bl=(l_{1}, \ldots , l_{d}) \in \Z_{\ge 0}^{d}$ and
$\kk=(k_{1}, \ldots , k_{d}) \in \Z_{\ge 1}^{d}$.

\begin{lemma}\label{lem:B-log}
For $\bl \in \Z_{\ge 0}^{d}$ and an index $\kk \in \Z_{\ge 1}^{d}$,
we have $Z_{m-1}(\bl; \kk)=O((\log{m})^{d})$ as $m \to \infty$.
\end{lemma}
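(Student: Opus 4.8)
The plan is to pass to absolute values and estimate each factor of the summand in \eqref{eq:def_Z}, and then to bound the resulting multiple sum by a product of one-dimensional sums.

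First I would note that for $1 \le m_a \le m-1$ one has $|m_a/m| \le 1$ and, since $|1 - \zeta_m^{m_a}| = |1 - e^{2\pi i m_a/m}| = 2\sin(\pi m_a/m)$,
\[
\left| \left(-\frac{2\pi i}{m}\right)^{k_a} \frac{\zeta_m^{(k_a-1)m_a}}{(1-\zeta_m^{m_a})^{k_a}} \right|
= \left( \frac{\pi/m}{\sin(\pi m_a/m)} \right)^{k_a}.
\]
Jordan's inequality $\sin\theta \ge \tfrac{2}{\pi}\theta$ on $[0,\pi/2]$, combined with $\sin(\pi m_a/m) = \sin(\pi(m-m_a)/m)$, gives $\sin(\pi m_a/m) \ge \tfrac{2}{m}\min(m_a, m-m_a)$, so the displayed quantity is at most $\bigl(\tfrac{\pi}{2\min(m_a,m-m_a)}\bigr)^{k_a}$. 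Therefore
\[
|Z_{m-1}(\bl;\kk)| \le \sum_{m-1 \ge m_1 > \cdots > m_d > 0} \ \prod_{a=1}^d \left( \frac{\pi}{2\min(m_a, m-m_a)} \right)^{k_a}.
\]

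Next I would drop the constraint $m_1 > \cdots > m_d$: since every term is nonnegative, summing over $1 \le m_a \le m-1$ independently for each $a$ only increases the total and makes it factor, yielding
\[
|Z_{m-1}(\bl;\kk)| \le \prod_{a=1}^d \left( \sum_{j=1}^{m-1} \left( \frac{\pi}{2\min(j, m-j)} \right)^{k_a} \right).
\]
Each inner sum is bounded by $2\sum_{j=1}^{\lfloor m/2 \rfloor}(\pi/(2j))^{k_a}$, which is $O(\log m)$ when $k_a = 1$ and $O(1)$ when $k_a \ge 2$; in all cases it is $O(\log m)$. Multiplying the $d$ factors gives $Z_{m-1}(\bl;\kk) = O((\log m)^d)$ as $m \to \infty$. (When $m \le d$ the sum defining $Z_{m-1}$ is empty, so there is nothing to check; the asymptotic concerns only $m \to \infty$.)

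The argument is entirely elementary, and I do not expect a serious obstacle; the only points that need a moment's care are the non-monotonicity of $\sin$ on $(0,\pi)$ — handled by replacing $m_a$ with $\min(m_a, m-m_a)$, which simultaneously captures both ``singular'' regimes where $m_a$ is close to $0$ or to $m$ and the denominator $1 - \zeta_m^{m_a}$ becomes small — and the legitimacy of discarding the ordering constraint, which is valid precisely because we have already passed to absolute values of nonnegative terms.
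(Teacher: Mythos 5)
Your argument is correct: the identity $|1-\zeta_m^{m_a}|=2\sin(\pi m_a/m)$, the symmetrized Jordan bound $\sin(\pi j/m)\ge \tfrac{2}{m}\min(j,m-j)$, and the decoupling of the ordered sum into a product of one-dimensional sums (legitimate once all terms are nonnegative) all check out, and each single sum is indeed $O(\log m)$ since $k_a\ge 1$. The paper reaches the same conclusion by different bookkeeping. It first bounds $|Z_{m-1}(\bl;\kk)|$ by $\sum_{a=0}^{d}\widetilde{Z}_m(\overline{\kk_a})\,\widetilde{Z}_m(\kk^a)$, where $\widetilde{Z}_m$ is the analogous sum restricted to $m/2\ge m_1>\cdots>m_d>0$; the substitution $m_i\mapsto m-m_i$ on the upper half of the range reverses the ordering, which is why the reversed prefix $\overline{\kk_a}$ appears. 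It then controls each $\widetilde{Z}_m(\kk)$ by noting that $\varphi_k(x)=(-\pi i x)^k e^{(k-1)\pi i x}/(1-e^{\pi i x})^k$ is bounded on $|x|\le 1$, so that $\widetilde{Z}_m(\kk)\le C_{\kk}\sum_{m/2\ge m_1>\cdots>m_d>0}\prod_a m_a^{-k_a}=O((\log m)^d)$. Your route replaces the half-range decomposition and the auxiliary function $\varphi_k$ by the single inequality valid on the whole range $0<m_a<m$, and replaces the nested harmonic-type sum by a product of $d$ independent sums; this is shorter and more self-contained as a proof of the lemma alone. The paper's decomposition is not gratuitous, though: the same splitting into the pieces $A_m^{\pm}$ supported on $m_a\lessgtr m/2$ is exactly what drives the asymptotic evaluation (not merely the upper bound) in Lemma~\ref{lem:A_asymp} and Theorem~\ref{thm:asym_alpha}, so the authors are reusing machinery they need anyway.
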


\begin{proof}
We see that
\begin{align}
\left|Z_{m-1}(\bl; \kk)\right|\le
\sum_{m-1\ge m_{1}>\cdots >m_{d}>0}
\prod_{a=1}^{d}\left|\left(-\frac{2\pi i}{m}\right)^{k_{a}}
\frac{\zeta_{m}^{(k_{a}-1)m_{a}}}{(1-\zeta_{m}^{m_{a}})^{k_{a}}}
\right|.
\label{eq:B-log-1}
\end{align}
For an index $\kk=(k_{1}, \ldots , k_{d})$ define
\begin{align*}
\widetilde{Z}_{m}(\kk)=
\sum_{m/2 \ge m_{1}>\cdots >m_{d}>0}
\prod_{a=1}^{d}
\left|\left(-\frac{2\pi i}{m}\right)^{k_{a}}
\frac{\zeta_{m}^{(k_{a}-1)m_{a}}}{(1-\zeta_{m}^{m_{a}})^{k_{a}}}
\right|.
\end{align*}
By a similar argument to \cite[\S 2.3.2]{BTT18},
the right side of \eqref{eq:B-log-1} is bounded from above by
$\sum_{a=0}^{d} \widetilde{Z}_{m}(\overline{\kk_a})\widetilde{Z}_{m}(\kk^a)$.
Hence it suffices to show that
$\widetilde{Z}_{m}(\kk)=O((\log{m})^{d})$ for $\kk \in \Z_{\ge 1}^{d}$.

For $k \ge 1$, we define the function $\varphi_{k}(x)$ by
\begin{align}
\varphi_{k}(x)=(-\pi i x)^{k}\frac{e^{(k-1)\pi i x}}{(1-e^{\pi i x})^{k}}.
\label{eq:def-varphi}
\end{align}
Then
\begin{align*}
\widetilde{Z}_{m}(\kk)=
\sum_{m/2 \ge m_{1}>\cdots >m_{d}>0}
\prod_{a=1}^{d}
\left|
m_{a}^{-k_{a}}\varphi_{k}(2m_{a}/m)
\right|.
\end{align*}
Since $\varphi_{k}(x)$ is bounded in the unit disc $|x|\le 1$,
there exists a positive constant $C_{\kk}$ such that
\begin{align*}
\widetilde{Z}_{m}(\kk) \le C_{\kk}
\sum_{m/2 \ge m_{1}>\cdots >m_{d}>0}\prod_{a=1}^{d}m_{a}^{-k_{a}}.
\end{align*}
Hence $\widetilde{Z}_{m}(\kk)=O((\log{m})^{d})$ because
$\kk \in \Z_{\ge 1}^{d}$.
\end{proof}

Lemma \ref{lem:B-log} implies that
the summand of \eqref{eq:ctilde-1} is $O((\log{m})^{J}/m)$ for some $J \ge 1$ unless $\wt(\bs)=l$,
in which case $\bs=\bl$ and $j_{1}=j_{2}=0$.
Since $B_{l, 0}=1$, for an index $\kk$ we have
\begin{equation}\label{eq:tilde_alpha}
\tilde{\alpha}_{l}(\kk; m)=(-1)^{l}
\sum_{\substack{\bl\in \Z_{\ge0}^{\dep(\kk)}\\ \wt(\bl)=l}}b\binom{\kk}{\bl}
Z_{m-1}(\bl; \kk+\bl)+O((\log{m})^{J_{\kk, l}}/m)
\end{equation}
for some $J_{\kk, l} \ge 1$, where we have used the notation in \eqref{eq:ind_notation}.

For $\bl=(l_1,\ldots,l_d) \in \Z_{\ge 0}^{d}$ and
$\kk=(k_1,\ldots,k_d) \in \Z_{\ge 1}^{d}$, we set
\begin{align}
A_{m}^{-}(\bl; \kk)=\sum_{m/2>m_{1}>\cdots >m_{d}>0}
\prod_{a=1}^{d}\left(\frac{m_{a}}{m}\right)^{l_{a}}
\left(-\frac{2\pi i}{m}\right)^{k_{a}}
\frac{\zeta_{m}^{(k_{a}-1)m_{a}}}{(1-\zeta_{m}^{m_{a}})^{k_{a}}}
\label{eq:def-A+}
\end{align}
and define $A_{m}^{+}(\bl; \kk)$ by \eqref{eq:def-A+} where the range of summation
$m/2>m_{1}>\cdots >m_{d}>0$ is replaced by $m/2 \ge m_{1}>\cdots >m_{d}>0$.
We set $A_{m}^{\pm}(\varnothing; \varnothing)=1$.
Then it holds that
\begin{align*}
Z_{m-1}(\bl; \kk+\bl)=\sum_{a=0}^{d}
&
\left\{\sum_{\substack{l_{j} \ge s_{j} \ge 0 \\ (1\le j \le a)}}
\left(\prod_{j=1}^{a}(-1)^{k_{j}+l_{j}+s_{j}}\binom{l_{j}}{s_{j}}\right)
\overline{A_{m}^{+}(s_{a}, \ldots , s_{1}; \overline{\kk_a+\bl_a})}
\right\} \\
&\times
A_{m}^{-}(\bl^a; \kk^a+\bl^a),
\end{align*}
where the bar on $A_{m}^{+}$ means the complex conjugate.

\begin{lemma}\label{lem:A_asymp}
Let $\bs\in\Z_{\ge0}^d$ and $\kk\in\Z_{\ge1}^d$.
Suppose that $d\ge 1$ and $k_{a}-1 \ge s_{a} \ge 0$ for $1\le a \le r$.
If $\bs\not=(0, \ldots , 0)$, it holds that
$A_{m}^{\pm}(\bs; \kk)=O((\log{m})^{d}/m)$.
If $\bs=(0, \ldots , 0)$, we have
\begin{align*}
A_{m}^{\pm}(\{0\}^{d}; \kk)=\zeta^{*}(\kk; \gamma_m)+
O((\log{m})^{J_{\kk}}/m)
\end{align*}
for some $J_{\kk} \ge 1$.
\end{lemma}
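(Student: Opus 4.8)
The plan is to reduce $A_m^{\pm}(\bs;\kk)$ to a nested sum governed by the functions $\varphi_k$ of \eqref{eq:def-varphi} (and to read the hypothesis as $k_a-1\ge s_a\ge 0$ for all $1\le a\le d$). Writing $x_a=2m_a/m$, one checks directly that
\[
\left(-\frac{2\pi i}{m}\right)^{k_a}\frac{\zeta_m^{(k_a-1)m_a}}{(1-\zeta_m^{m_a})^{k_a}}=m_a^{-k_a}\,\varphi_{k_a}(2m_a/m),
\]
so that
\[
A_m^{-}(\bs;\kk)=\sum_{m/2> m_1>\cdots>m_d>0}\ \prod_{a=1}^{d}\left(\frac{m_a}{m}\right)^{s_a}m_a^{-k_a}\,\varphi_{k_a}(2m_a/m)
\]
and $A_m^{+}$ is the same with $m/2>m_1$ replaced by $m/2\ge m_1$. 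The function $\varphi_k$ is holomorphic on $|x|<2$ with $\varphi_k(0)=1$, hence bounded on the closed unit disc; moreover $A_m^{+}(\bs;\kk)-A_m^{-}(\bs;\kk)$ is built only from the configurations $m_1=m/2$ and is therefore $O((\log m)^{d-1}/m)$ because $k_1\ge1$. Thus it suffices to analyse $A_m^{-}$.

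For $\bs\neq(0,\dots,0)$, take absolute values and bound $|\varphi_{k_a}|\le C$ on the unit disc; factoring out $m^{-\wt(\bs)}$ leaves $\sum_{m/2>m_1>\cdots>m_d>0}\prod_a m_a^{\,s_a-k_a}$. By hypothesis $s_a\le k_a-1$, so every exponent $s_a-k_a$ is $\le -1$, and this sum is dominated by $\sum_{m/2>m_1>\cdots>m_d>0}\prod_a m_a^{-1}=O((\log m)^{d})$. Since $\wt(\bs)\ge1$ we obtain $A_m^{\pm}(\bs;\kk)=O((\log m)^{d}/m)$, as required.

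For $\bs=(0,\dots,0)$ the statement is essentially the content of \cite[\S2.3.2]{BTT18}: the part of $\alpha_0(\kk;m)=H_{m-1}(\kk;\zeta_m)$ coming from the region where all $m_j<m/2$ is precisely the $a=0$ summand $\zeta^{*}(\kk;\gamma_m)$ in \eqref{eq:BTT} (up to $O((\log m)^{J_\kk}/m)$), and by \eqref{eq:z-to-ztilde} this region contributes $A_m^{-}(\{0\}^{d};\kk)$ up to the harmless replacement of $(1-\zeta_m)$ by $-2\pi i/m$. I would reprove it as follows. In the base case $d=1$, $k_1=1$ one uses $\frac1{m_1}\varphi_1(2m_1/m)=\frac{-2\pi i/m}{1-\zeta_m^{m_1}}$ together with $\frac1{1-\zeta_m^{j}}=\frac12+\frac i2\cot(\pi j/m)$, so that $A_m^{-}(0;(1))=-\frac{\pi i\lfloor m/2\rfloor}{m}+\frac{\pi}{m}\sum_{j=1}^{\lfloor m/2\rfloor}\cot\frac{\pi j}{m}$; splitting off $\sum_{j}1/j=\log(m/2)+\gamma+O(1/m)$ and recognising a Riemann sum for the smooth function $\pi\cot(\pi x)-1/x$ on $[0,\tfrac12]$ (with integral $\log 2-\log\pi$) yields $A_m^{-}(0;(1))=\log(m/\pi)+\gamma-\tfrac{\pi i}{2}+O(\log m/m)=\gamma_m+O(\log m/m)=\zeta^{*}((1);T)|_{T=\gamma_m}+O(\log m/m)$. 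For a component with $k_a\ge2$ one may replace $\varphi_{k_a}$ by $1$ at the cost of only $O\bigl(\tfrac1m\sum_{m_a\le m/2}m_a^{1-k_a}\bigr)=O((\log m)/m)$. The general case then follows by induction on the depth, matching the recursion for truncated nested harmonic sums with the stuffle recursion defining $\zeta^{*}(\kk;T)$: the components with $k_a=1$ contribute exactly the shifts from $\log(m/2)+\gamma$ to $\gamma_m$ predicted by the base case, and every error term is of the shape $O((\log m)^{J}/m)$.

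The main obstacle is the case $\bs=0$ in the presence of components $k_a=1$, where $\varphi_1$ cannot be approximated by $1$ without destroying the $O(1/m)$ gain; one must carry the truncated-cotangent asymptotic through all the nestings and verify that the resulting polynomial in $\gamma_m$ is precisely $\zeta^{*}(\kk;\gamma_m)$. This combinatorial identification — that truncating nested harmonic sums at $m/2$ reproduces the stuffle regularization — is the heart of the argument, and is the step I would invoke from \cite{BTT18} rather than re-derive in full.
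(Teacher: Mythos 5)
Your proposal is correct and follows essentially the same route as the paper: the same rewriting of $A_m^{-}(\bs;\kk)$ in terms of the bounded functions $\varphi_{k}$, the same $\prod_a m_a^{s_a-k_a}\le\prod_a m_a^{-1}$ bound giving $O((\log m)^d)$ for the inner sum when $\bs\neq 0$, the same reduction of $A_m^{+}$ to $A_m^{-}$ via the single boundary term $m_1=m/2$, and the same appeal to \cite[Proposition 2.9]{BTT18} for the case $\bs=(0,\ldots,0)$ (which is exactly what the paper does). Your additional cotangent-sum sketch for reproving the $\bs=0$ case is extra but consistent with the cited result.
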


\begin{proof}
Since $A_{m}^{+}(\bs; \kk)=A_{m}^{-}(\bs; \kk)$ if $m$ is odd and
\begin{align*}
A_{m}^{+}(\bs; \kk)=-\frac{1}{2^{s_{1}}}\left( \frac{\pi i}{m} \right)^{k_{1}}
A_{m}^{-}(\bs^{1}; \kk^{1})+A_{m}^{-}(\bs;\kk)
\end{align*}
if $m$ is even,
it suffices to show the statement for $A_{m}^{-}(\bs;\kk)$.
For $\bs=(0, \ldots , 0)$ it is proved in \cite[Proposition 2.9]{BTT18}.
Using the function $\varphi_{k}(x)$ defined by \eqref{eq:def-varphi},
we rewrite as
\begin{align*}
A_{m}^{-}(\bs; \kk)=m^{-\wt(\bs)}\sum_{m/2>m_{1}>\cdots >m_{d}>0}\prod_{a=1}^{d}
\frac{\varphi_{k_{a}}(2m_{a}/m)}{m_{a}^{k_{a}-s_{a}}}.
\end{align*}
By the same argument as in the proof of Lemma \ref{lem:B-log},
we see that the sum in the right side is $O((\log{m})^{d})$.
Hence, if $\wt(\bs)\ge 1$, it holds $A_{m}^{-}(\bs; \kk)=O((\log{m})^{d}/m)$.
\end{proof}

\begin{theorem}\label{thm:asym_alpha}
For all $l \ge 1$ and each index $\kk$ of depth $d$, we have
\begin{equation}\label{eq:asym_alpha}
\begin{aligned}
\tilde{\alpha}_{l}(\kk; m)&=\sum_{a=1}^{d}(-1)^{ \wt(\kk_a)}
\sum_{\substack{\bl\in \Z_{\ge0}^a\\ \wt(\bl)=l}}b\binom{\kk_a}{\bl}\zeta^\ast\left(\overline{\kk_a+\bl}; \overline{\gamma_m}\right)\,
\zeta^\ast\left(\kk^a; \gamma_m\right) +O\left( \frac{(\log{m})^{J_{\kk, l}}}{m}\right)
\end{aligned}
\end{equation}
for some integer $J_{\kk, l}\ge 1$.
\end{theorem}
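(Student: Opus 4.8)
The plan is to carry the already-established reduction \eqref{eq:tilde_alpha} to completion. That identity writes $\tilde\alpha_l(\kk;m)$, up to an error $O((\log m)^{J}/m)$, as $(-1)^l$ times a finite sum over $\bl\in\Z_{\ge0}^{\dep(\kk)}$ with $\wt(\bl)=l$ of $b\binom{\kk}{\bl}\,Z_{m-1}(\bl;\kk+\bl)$, and the decomposition of $Z_{m-1}(\bl;\kk+\bl)$ displayed just before Lemma \ref{lem:A_asymp} expands each $Z_{m-1}(\bl;\kk+\bl)$ as a sum over $a=0,\dots,\dep(\kk)$ of $\big(\sum_{0\le s_j\le l_j}\prod_{j\le a}(-1)^{k_j+l_j+s_j}\binom{l_j}{s_j}\,\overline{A_m^{+}(s_a,\dots,s_1;\overline{\kk_a+\bl_a})}\big)\,A_m^{-}(\bl^a;\kk^a+\bl^a)$. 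So after substitution $\tilde\alpha_l(\kk;m)$ becomes an explicit finite $\Q$-linear combination of products of two $A_m^{\pm}$-values plus a controlled error, and it remains only to insert the asymptotics of Lemma \ref{lem:A_asymp} and simplify.

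First I would apply Lemma \ref{lem:A_asymp} to the factor $A_m^{-}(\bl^a;\kk^a+\bl^a)$: the lemma makes every term with $\bl^a\ne(0,\dots,0)$ of size $O((\log m)^{\dep(\kk)}/m)$, hence absorbable into the error, and forces the surviving terms to have $l_{a+1}=\cdots=l_d=0$, so that $\wt(\bl_a)=\wt(\bl)=l$ and $A_m^{-}(\{0\}^{d-a};\kk^a)=\zeta^\ast(\kk^a;\gamma_m)+O((\log m)^{J}/m)$. Next I would apply the same lemma to $A_m^{+}(s_a,\dots,s_1;\overline{\kk_a+\bl_a})$; its hypothesis holds because $0\le s_j\le l_j\le k_j+l_j-1$ (using $k_j\ge1$), so only $(s_1,\dots,s_a)=(0,\dots,0)$ contributes and $A_m^{+}(\{0\}^a;\overline{\kk_a+\bl_a})=\zeta^\ast(\overline{\kk_a+\bl_a};\gamma_m)+O((\log m)^{J}/m)$; taking complex conjugates and using that $\zeta^\ast(-;T)$ is a polynomial in $T$ with coefficients in $\mathcal{Z}\subset\R$, this gives $\overline{A_m^{+}(\{0\}^a;\overline{\kk_a+\bl_a})}=\zeta^\ast(\overline{\kk_a+\bl_a};\overline{\gamma_m})+O((\log m)^{J}/m)$. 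Throughout I would use that each $\zeta^\ast(-;\gamma_m)$ is itself $O((\log m)^{R})$ for a suitable $R$, since $\gamma_m=\log(m/\pi)+\gamma-\pi i/2$ grows like $\log m$; hence the product of a genuine $O((\log m)^{J}/m)$ term with a $\zeta^\ast$-value is still $O((\log m)^{J'}/m)$, and since only finitely many terms occur all errors collect into a single $O((\log m)^{J_{\kk,l}}/m)$.

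Finally I would collect the constants. In each surviving term the inner factor $\prod_{j=1}^a(-1)^{k_j+l_j+s_j}\binom{l_j}{s_j}$ equals $(-1)^{\wt(\kk_a)+\wt(\bl_a)}=(-1)^{\wt(\kk_a)+l}$, which cancels against the overall $(-1)^l$ of \eqref{eq:tilde_alpha} to leave $(-1)^{\wt(\kk_a)}$; and since $\bl^a=(0,\dots,0)$ one has $b\binom{\kk}{\bl}=b\binom{\kk_a}{\bl_a}$. The $a=0$ term disappears because it would require $\wt(\varnothing)=l\ge1$. Reindexing the inner sum by the first $a$ components $\bl_a$ (now ranging over $\Z_{\ge0}^a$ with $\wt(\bl_a)=l$) yields exactly the right-hand side of \eqref{eq:asym_alpha}. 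I expect the only real difficulty to be bookkeeping — keeping the reversals $\overline{\kk_a+\bl_a}$ and the two complex conjugations straight, and checking that every discarded piece is genuinely $O((\log m)^{J}/m)$; there is no new analytic ingredient beyond \eqref{eq:tilde_alpha} and Lemma \ref{lem:A_asymp}.
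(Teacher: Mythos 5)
Your proposal is correct and follows exactly the route the paper takes: its proof of Theorem \ref{thm:asym_alpha} is the one-line instruction ``apply Lemma \ref{lem:A_asymp} to \eqref{eq:tilde_alpha}'', and you have simply carried out that application in full, with the right verifications (the hypothesis $s_j\le l_j\le k_j+l_j-1$ of Lemma \ref{lem:A_asymp}, the polylogarithmic bounds absorbing discarded terms, the sign $(-1)^{\wt(\kk_a)+l}$ cancelling the prefactor $(-1)^l$, the identity $b\binom{\kk}{\bl}=b\binom{\kk_a}{\bl_a}$ when $\bl^a=0$, and the conjugation $\overline{\zeta^\ast(\cdot;\gamma_m)}=\zeta^\ast(\cdot;\overline{\gamma_m})$). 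No gaps.
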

\begin{proof}
Applying Lemma \ref{lem:A_asymp} to \eqref{eq:tilde_alpha}, we obtain the desired result.
\end{proof}

\subsection{Proof of \eqref{eq:lim_formula}}
In order to check that the limit $\displaystyle\lim_{m\rightarrow \infty}H_{m-1}(\kk;q_m(t))$ exists,
we need the following lemma, from which, by Theorem \ref{thm:asym_alpha}, \eqref{eq:lim_formula} follows.

\begin{lemma}\label{lem:indep_T}
For any index $\kk$, the sum
\[\sum_{a=0}^{\dep(\kk)} (-1)^{\wt(\kk_a)}
\sum_{\bl\in \Z_{\ge0}^a}  t^{\wt(\bl)}\,
b\binom{\kk_a}{\bl} \zeta^\ast \left(
\overline{\kk_a+\bl};T+\frac{\pi i}{2}
\right)
\zeta^\ast \left(\kk^a;T-\frac{\pi i}{2}\right)
.\]
does not depend on $T$ and coincides with $\widehat{\xi}(\kk)\in \mathcal{Z}[\pi i][[t]]$.
\end{lemma}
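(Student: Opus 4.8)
The plan is to pass to a version of the sum carrying two independent regularization parameters and to show that it depends only on their difference.

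Write, for an index $\kk$,
\[
G(\kk;T_{1},T_{2})=\sum_{a=0}^{\dep(\kk)}(-1)^{\wt(\kk_{a})}\sum_{\bl\in\Z_{\ge0}^{a}}t^{\wt(\bl)}\,b\binom{\kk_{a}}{\bl}\,\zeta^{\ast}(\overline{\kk_{a}+\bl};T_{1})\,\zeta^{\ast}(\kk^{a};T_{2})\ \in\ \mathcal{Z}[T_{1},T_{2}][[t]].
\]
The sum in the statement is exactly $G(\kk;T+\frac{\pi i}{2},\,T-\frac{\pi i}{2})$, while $\widehat{\xi}(\kk)=G(\kk;\frac{\pi i}{2},-\frac{\pi i}{2})$ by the definition of $\widehat{\xi}(\kk)$ in Theorem \ref{thm:hatS_from_q}. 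Hence it suffices to prove that $G(\kk;T_{1},T_{2})$ depends only on $T_{1}-T_{2}$: then its specialization at $T_{1}=T+\frac{\pi i}{2}$, $T_{2}=T-\frac{\pi i}{2}$ (for which $T_{1}-T_{2}=\pi i$ identically) is independent of $T$, and putting $T=0$ identifies that series with $\widehat{\xi}(\kk)$. That $\widehat{\xi}(\kk)\in\mathcal{Z}[\pi i][[t]]$ is clear because $\zeta^{\ast}(\cdot;\pm\frac{\pi i}{2})\in\mathcal{Z}[\pi i]$.

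To prove the dependence on $T_{1}-T_{2}$ only — equivalently, that $(\partial_{T_{1}}+\partial_{T_{2}})G(\kk;T_{1},T_{2})=0$ — I would use the regularization formalism. Recall that $Z^{\ast}\colon\h^{1}_{\ast}\to\mathcal{Z}[T]$ is a $\Q$-algebra homomorphism with $Z^{\ast}(y_{1})=T$, and that under the isomorphism $\h^{1}_{\ast}\cong\h^{0}_{\ast}[y_{1}]$ the operator $\partial/\partial y_{1}$ corresponds to a derivation $D$ for the stuffle product with $D(y_{1})=1$, $D|_{\h^{0}}=0$ and $\partial_{T}\circ Z^{\ast}=Z^{\ast}\circ D$. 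For $X=\sum_{i}u_{i}\otimes v_{i}\in(\h^{1}\otimes\h^{1})[[t]]$ set $\langle X\rangle_{T_{1},T_{2}}=\sum_{i}Z^{\ast}(u_{i};T_{1})\,Z^{\ast}(v_{i};T_{2})\in\mathcal{Z}[T_{1},T_{2}][[t]]$, and let
\[
\gamma_{t}(\kk)=\sum_{a=0}^{\dep(\kk)}(-1)^{\wt(\kk_{a})}\sum_{\bl\in\Z_{\ge0}^{a}}t^{\wt(\bl)}\,b\binom{\kk_{a}}{\bl}\,y_{\overline{\kk_{a}+\bl}}\otimes y_{\kk^{a}}\ \in\ (\h^{1}\otimes\h^{1})[[t]].
\]
Then $G(\kk;T_{1},T_{2})=\langle\gamma_{t}(\kk)\rangle_{T_{1},T_{2}}$, so that
\[
(\partial_{T_{1}}+\partial_{T_{2}})G(\kk;T_{1},T_{2})=\bigl\langle(D\otimes\mathrm{id}+\mathrm{id}\otimes D)\,\gamma_{t}(\kk)\bigr\rangle_{T_{1},T_{2}},
\]
and the whole statement is reduced to the single identity
\[
\bigl\langle(D\otimes\mathrm{id}+\mathrm{id}\otimes D)\,\gamma_{t}(\kk)\bigr\rangle_{T_{1},T_{2}}=0 .
\]

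This co-Leibniz identity is the main obstacle. It is the two-parameter refinement of the $T$-independence of $\zeta^{\bullet}_{\widehat{\mathcal{S}}}(\kk)$ established in \cite[Proposition 3.2.4]{Jarossay19} and \cite[Proposition 2.1]{OnoSekiYamamoto}, and I expect to prove it by induction on $\dep(\kk)$. Peeling off the first letter of $y_{\kk}$ and applying the stuffle recursion, which expresses $y_{1}\ast w$ (for $w\in\h^{1}$) as a sum over insertions of $y_{1}$ into $w$ together with a sum over the ways of raising a single component of $w$, one rewrites the left-hand side as a combination of $G$'s of strictly smaller depth and of ``component-raising'' contributions. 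The latter match, after the summation over $a$, the $t$-derivative of the generating function $\sum_{l\ge0}\binom{k+l-1}{l}(tz)^{l}=(1-tz)^{-k}$ implicit in the weights $b\binom{\kk_{a}}{\bl}t^{\wt(\bl)}$, so that on regrouping the sum over $a$ telescopes and all terms cancel in pairs. Since $D$ acts inside a single tensor factor, the parameters $T_{1}$ and $T_{2}$ never interact during this computation, so the two-parameter statement costs nothing beyond the classical diagonal case; this is precisely the point of the two-parameter reformulation. (As a softer alternative, the diagonal case of \cite[Proposition 2.1]{OnoSekiYamamoto} already gives vanishing on $T_{1}=T_{2}$, hence divisibility of the left-hand side by $T_{1}-T_{2}$ in $\mathcal{Z}[T_{1},T_{2}][[t]]$; but the direct telescoping computation appears to be the cleanest route to the full vanishing.)
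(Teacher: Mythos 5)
Your reduction is sound and is in fact the same one the paper uses: the two\--parameter series
$G(\kk;T_1,T_2)=\sum_{a}(-1)^{\wt(\kk_a)}\sum_{\bl}t^{\wt(\bl)}b\binom{\kk_a}{\bl}\zeta^\ast(\overline{\kk_a+\bl};T_1)\zeta^\ast(\kk^a;T_2)$
depends only on $T_1-T_2$, which immediately gives both assertions of the lemma. The problem is that you do not prove this; you yourself call the co\--Leibniz identity ``the main obstacle'' and then only describe a hoped\--for induction (``I expect to prove it\dots the sum over $a$ telescopes and all terms cancel in pairs''). That assertion is the entire content of the lemma, so as written the argument has a genuine gap. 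Moreover, the sketch underestimates where the difficulty lies: the weights $t^{\wt(\bl)}\,b\binom{\kk_a}{\bl}$ are not natural stuffle\--side data — they are the coefficients of $\mathrm{reg}_0(x_{\kk}x_0^l)$ in the \emph{shuffle} algebra (see \eqref{eq:reg0}) — so expressing the binomial\--weighted sums of stuffle\--regularized values in a form on which your derivation $D$ acts tractably already requires Racinet's regularization theorem; this is precisely the paper's Lemma \ref{lem:shuffle_l_part}, which identifies them as the coefficients $\Phi^\ast(T)[x_{\kk}x_0^l]$ with $\Phi^\ast(T)=\Lambda(x_1)\Phi^\shuffle(T)$. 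Your claim that ``the two\--parameter statement costs nothing beyond the classical diagonal case'' because $D$ acts within one tensor factor is not justified: the diagonal $T$\--independence and the full $T_1-T_2$ dependence are different statements, and your parenthetical fallback only yields divisibility of $(\partial_{T_1}+\partial_{T_2})G$ by $T_1-T_2$, not its vanishing.

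For comparison, the paper closes exactly this step not by a combinatorial telescope but by a closed formula: writing $\Xi(T_1,T_2)=\sigma(\Phi^\ast(T_1))x_1\Phi^\ast(T_2)$, Proposition \ref{pror:generating_function} shows that the sum in the lemma is $\sum_{l}t^l\,\Xi(T_1,T_2)[x_0^lx_1x_{\kk}]$ (plus the $a=0$ term), and Proposition \ref{prop:xi} computes
\[
\Xi(T_1,T_2)=\sigma(\Phi^\shuffle(0))\,\frac{\sin\pi x_1}{\pi}\,\exp\big((T_2-T_1)x_1\big)\,\Phi^\shuffle(0),
\]
using $\Phi^\shuffle(T)=\exp(Tx_1)\Phi^\shuffle(0)$ and the Gamma\--factor identity $\Lambda(-x_1)\,x_1\,\Lambda(x_1)=\sin(\pi x_1)/\pi$. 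The $T_1-T_2$ dependence is thus read off from an identity of non\--commutative generating series rather than proved term by term. If you want to complete your route, you would need to actually establish the telescoping identity — most plausibly by first proving the analogue of Lemma \ref{lem:shuffle_l_part}, at which point you are essentially reconstructing the paper's argument.
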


Our proof of Lemma \ref{lem:indep_T} is done by a similar computation with \cite{Hirose,Jarossay19}.
We repeat it for convenience.

Recall the notation from \S2.1.
Consider the algebraic projection
\begin{equation*}\label{eq:def_reg0}
{\rm reg}_0 : \mathfrak{h}_\shuffle\rightarrow \mathfrak{h}^1_\shuffle
\end{equation*}
which sends $x_0$ to 0.
It is well-known that
\begin{equation}\label{eq:reg0}
{\rm reg}_0(x_{\kk}x_0^l) =(-1)^l\sum_{\substack{\bl\in \Z_{\ge0}^d \\ \wt(\bl)=l}} b\binom{\kk}{\bl} x_{\kk+\bl}
\end{equation}
holds for any index $\kk$ of depth $d$ and $l\ge0$ (see e.g., \cite[p.955]{Brown12}).
Denote by $\{x_0,x_1\}^\times$ the set of words consisting of $x_0$ and $x_1$.
We assume that the set $\{x_0,x_1\}^\times$ contains the empty word $\varnothing$.
Let us consider the two non-commutative generating series
\begin{align*}
\Phi^\shuffle(T)=\sum_{w\in\{x_0,x_1\}^\times}
Z^\shuffle ({\rm reg}_0(w)) w \quad \mbox{and}\quad
\Phi^\ast(T)=\Lambda(x_1)\Phi^\shuffle(T) ,
\end{align*}
which lie in $ \R[T]\langle\langle x_0,x_1\rangle\rangle$,
where we set $Z^{\shuffle}(\varnothing)=1$ and
\[\Lambda(x)=\exp\left( \displaystyle\sum_{n\ge2} \frac{(-1)^{n-1}}{n}\zeta(n)x^n\right).\]

For a non-commutative power series $S\in \R[T]\langle\langle x_0,x_1\rangle\rangle$,
we denote by $S[w]$ the coefficient of a word $w$ in $S$.

\begin{lemma}{\cite[Proposition 3.2.2]{Jarossay19}}\label{lem:shuffle_l_part}
For each index $\kk$ and $l\ge0$ we have
\begin{equation}\label{eq:refined_reg_thm}
\Phi^\ast(T)[x_{\kk}x_0^l]=(-1)^l\sum_{\substack{\bl\in \Z_{\ge0}^{\dep(\kk)} \\ \wt(\bl)=l}} b\binom{\kk}{\bl} \zeta^\ast(\kk+\bl;T).
\end{equation}
\end{lemma}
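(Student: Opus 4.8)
The plan is to reduce the assertion to its case $l=0$ — which is precisely the generating-series form of the regularization theorem in Racinet's formulation \eqref{eq:reg_theorem} — and then to account for the trailing block $x_0^l$ by a direct bookkeeping argument built on the explicit formula \eqref{eq:reg0} for ${\rm reg}_0$. First I would write $\Lambda(x_1)=\sum_{n\ge0}\lambda_n x_1^n$ with $\lambda_n\in\mathcal{Z}$ (so $\lambda_0=1$, $\lambda_1=0$), and note that, since $\Phi^\ast(T)=\Lambda(x_1)\Phi^\shuffle(T)$ is a concatenation product, every word of the form $w=x_1^m w'$ with $w'$ not beginning with $x_1$ satisfies $\Phi^\ast(T)[w]=\sum_{n=0}^{m}\lambda_n\,\Phi^\shuffle(T)[x_1^{m-n}w']$. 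Combining this with the trivial identity ${\rm reg}_0(x_{\boldsymbol{m}})=x_{\boldsymbol{m}}$ for an index $\boldsymbol{m}$ (whence $\Phi^\shuffle(T)[x_{\boldsymbol{m}}]=\zeta^\shuffle(\boldsymbol{m};T)$), and with $\Phi^\ast(T)[x_{\boldsymbol{m}}]=\zeta^\ast(\boldsymbol{m};T)$ from the regularization theorem, I obtain, writing $\boldsymbol{m}=(\{1\}^{r},\boldsymbol{m}')$ with $\boldsymbol{m}'$ admissible or empty, the relation
$$\zeta^\ast\big((\{1\}^{r},\boldsymbol{m}');T\big)=\sum_{n=0}^{r}\lambda_n\,\zeta^\shuffle\big((\{1\}^{r-n},\boldsymbol{m}');T\big).$$
This is the only non-formal ingredient of the argument.

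Next I would expand both sides of the claim as sums of shuffle-regularized values. Write $\kk=(\{1\}^{r},\kk')$ with $r$ the number of leading $1$'s and $\kk'$ admissible or empty, and dispose of the trivial case $(\kk',l)=(\varnothing,0)$ (which is the case $l=0$). Then $x_{\kk}x_0^l=x_1^{r}\cdot(x_{\kk'}x_0^l)$ with $x_{\kk'}x_0^l$ not beginning with $x_1$, so the coefficient formula above followed by \eqref{eq:reg0} and the homomorphism $Z^\shuffle$ gives
$$\Phi^\ast(T)[x_{\kk}x_0^l]=(-1)^l\sum_{n=0}^{r}\lambda_n\sum_{\substack{\boldsymbol{j}\in\Z_{\ge0}^{\dep(\kk)-n}\\ \wt(\boldsymbol{j})=l}}b\binom{\kk^{(n)}}{\boldsymbol{j}}\,\zeta^\shuffle\big(\kk^{(n)}+\boldsymbol{j};T\big),\qquad \kk^{(n)}:=(\{1\}^{r-n},\kk').$$
On the other hand, substituting the $l=0$ relation into each $\zeta^\ast(\kk+\bl;T)$ turns the target right-hand side $(-1)^l\sum_{\wt(\bl)=l}b\binom{\kk}{\bl}\zeta^\ast(\kk+\bl;T)$ into a second double sum of terms $\lambda_n\,\zeta^\shuffle(\boldsymbol{m};T)$.

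The last step is to match these two double sums term by term. I would group both according to the index $\boldsymbol{m}$ occurring inside $\zeta^\shuffle$; since such an $\boldsymbol{m}$ forces $n=\dep(\kk)-\dep(\boldsymbol{m})$, it suffices to compare, for each $\boldsymbol{m}$, the coefficient of $\lambda_n$ on the two sides. On the left, $\boldsymbol{m}=\kk^{(n)}+\boldsymbol{j}$ determines $\boldsymbol{j}$ uniquely, so that coefficient is $(-1)^l\,b\binom{\kk^{(n)}}{\boldsymbol{m}-\kk^{(n)}}$ whenever $\boldsymbol{m}\ge\kk^{(n)}$ componentwise with the correct weight. On the right, a short unwinding shows that $\boldsymbol{m}$ arises from exactly one $\bl$ with $\wt(\bl)=l$, namely $\bl=(\{0\}^{n},\boldsymbol{m}-\kk^{(n)})$ (the one with $(\kk+\bl)^{(n)}=\boldsymbol{m}$), and that the condition ``$n$ does not exceed the number of leading zeros of $\bl$'' needed to produce the $\lambda_n$-term holds automatically; its coefficient is $(-1)^l\,b\binom{\kk}{\bl}$. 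Since the first $r$ (resp.\ $r-n$) components of $\kk$ (resp.\ $\kk^{(n)}$) equal $1$, the binomial factors attached to those components in both $b\binom{\kk}{\bl}$ and $b\binom{\kk^{(n)}}{\boldsymbol{m}-\kk^{(n)}}$ are $1$, and both coefficients collapse to the same product $b\binom{\kk'}{\,\cdot\,}$ over the admissible tail; this proves the identity.

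I expect the real content to sit entirely in the $l=0$ step, i.e.\ the regularization theorem, which silently incorporates genuine $\Q$-linear relations among multiple zeta values; in a fully self-contained account one would reprove it along the lines of \cite{IharaKanekoZagier06} or \cite{Racinet02}, but here it may simply be quoted as \eqref{eq:reg_theorem}. Among the purely formal manipulations, the one demanding care is tracking how the leading $1$'s of $\kk$ interact with the left factor $\Lambda(x_1)$ and with ${\rm reg}_0$ — the ranges of the auxiliary indices $n$, $\boldsymbol{j}$, $\bl$ and of the count of leading zeros all shift with them — but once the bookkeeping is set up correctly, the coefficient comparison is immediate from the triviality of the binomials attached to components equal to $1$.
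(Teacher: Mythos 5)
Your argument is correct and rests on exactly the same ingredients as the paper's proof: Racinet's regularization theorem \eqref{eq:reg_theorem} for the $l=0$ case, the explicit formula \eqref{eq:reg0} for ${\rm reg}_0$, and a bookkeeping of how the leading $1$'s of $\kk$ (equivalently, the leading zeros of $\bl$) interact with the prefactor $\Lambda(x_1)$ — the paper organizes this last step as a partition $I_j=I_j^{+}\sqcup\cdots\sqcup I_{a-1}^{+}\sqcup I_a$ regrouping the expanded left-hand side back into $\Phi^\ast(T)[x_{\kk+\bl}]$ terms, whereas you expand both sides down to $\zeta^\shuffle$ and match coefficients, which is the same computation read in the opposite direction. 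So this is essentially the paper's proof, correctly executed.
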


\begin{proof}
Denote by $\{y_k\mid k\ge1\}^\times$ the set of words consisting of $y_k \ (k\ge1)$.
Let
\begin{align*}
\Psi^\ast(T) & = \sum_{w\in\{y_k\mid k\ge1\}^\times} Z^\ast (w) w,
\end{align*}
where we set $Z^\ast(\varnothing)=1$.
Define the $ \R[T]$-linear map
\begin{align*}
\pi_Y : \R[T]\langle\langle x_0,x_1\rangle\rangle&\longrightarrow  \R[T]\langle\langle y_k\mid k\ge1\rangle\rangle
\end{align*}
by
$\pi_{Y}(x_{\kk})=y_{\kk}$ for $\kk\in \Z_{\ge1}^d \, (d\ge0)$ and
$\pi_{Y}(wx_0)=0$ for $w\in \mathfrak{H}^1$.
Then the regularization theorem by Racinet \cite{Racinet02} (see also \cite{IharaKanekoZagier06})
states the equality
\begin{equation}\label{eq:reg_theorem}
\Psi^\ast(T)=\pi_Y\big(\Lambda(x_1)\Phi^\shuffle(T)).
\end{equation}
This shows that
\begin{equation}\label{eq:reg_thm}
\Phi^\ast(T)[x_{\kk}]=\Lambda(x_1)\Phi^\shuffle(T)[x_{\kk}]=\Psi^\ast(T)[y_{\kk}]
\end{equation}
holds for any index $\kk$.

Let $l\ge1$.
For an admissibe index $\kk$, by using \eqref{eq:reg0} and \eqref{eq:reg_thm},
we see
\begin{align*}
&
\Phi^\ast(T)[x_{\kk}x_0^l]=\Lambda(x_1)\Phi^\shuffle(T)[x_{\kk}x_0^l]=
\Phi^\shuffle(T)[x_{\kk}x_0^l]=
(-1)^l\sum_{\substack{\bl\in \Z_{\ge0}^{\dep(\kk)} \\ \wt(\bl)=l}}
b\binom{\kk}{\bl} \Phi^\shuffle(T)[x_{\kk+\bl}]\\
&= (-1)^l\sum_{\substack{\bl\in \Z_{\ge0}^{\dep(\kk)} \\ \wt(\bl)=l}} b\binom{\kk}{\bl} \left(\Lambda(x_1)\Phi^\shuffle(T)\right)[x_{\kk+\bl}]=
(-1)^l\sum_{\substack{\bl\in \Z_{\ge0}^{\dep(\kk)} \\ \wt(\bl)=l}}
b\binom{\kk}{\bl} \Psi^\ast(T)[y_{\kk+\bl}],
\end{align*}
from which \eqref{eq:refined_reg_thm} follows.

Let us consider the case $\kk=(\underbrace{1,\ldots,1}_a,k_{a+1},\ldots,k_d)$ with $k_{a+1}\ge2$ and $a\ge1$.
Set $\Lambda(x)=\displaystyle\sum_{j\ge0}\lambda_j x^j$.
Using \eqref{eq:reg0}, one computes
\begin{align*}
\Phi^\ast(T)[x_{\kk}x_0^l]&= \sum_{j=0}^a \lambda_{j} \Phi^\shuffle(T)[ x_{\kk^j}x_0^l]=\sum_{j=0}^a \lambda_{j}  (-1)^l\sum_{\substack{\bl\in \Z_{\ge0}^{d-j}\\ \wt(\bl)=l}} b\binom{\kk^j}{\bl} \Phi^\shuffle(T)[x_{\kk^j+\bl}]\\
&=(-1)^l\sum_{j=0}^a \lambda_{j}  \sum_{\substack{\bl=(l_{1},\ldots,l_d)\in \Z_{\ge0}^{d}\\ \wt(\bl)=l\\l_1=\cdots=l_j=0}} b\binom{\kk}{\bl} \Phi^\shuffle(T)[x_{\kk^j+\bl^j}].
\end{align*}
For $0\le s \le a$ we set
\begin{align*}
I_{s}=\{\bl \in \Z_{\ge 0}^{d} \mid \wt(\bl)=l, \, l_{1}=\cdots =l_{s}=0\}, \qquad
I_{s}^{+}=\{\bl \in I_{s} \mid l_{s+1}\ge1\}.
\end{align*}
{}From the above calculation, we see that
\begin{align*}
(-1)^{l}\Phi^\ast(T)[x_{\kk}x_0^l]=\sum_{j=0}^{a}\lambda_{j}\sum_{\bl \in I_{j}}
b\binom{\kk}{\bl} \Phi^\shuffle(T)[x_{\kk^j+\bl^j}].
\end{align*}
Using $I_{j}=I_{j}^{+} \sqcup \cdots \sqcup I_{a-1}^{+} \sqcup I_{a}$,
we decompose the right side into the two parts
\begin{align*}
&
R_{1}=\sum_{j=0}^{a-1}\Lambda_{j}\sum_{s=j}^{a-1}\sum_{\bl \in I_{s}^{+}}
b\binom{\kk}{\bl} \Phi^\shuffle(T)[x_{\kk^j+\bl^j}], \\
&
R_{2}=\sum_{j=0}^{a}\Lambda_{j}\sum_{\bl \in I_{a}}
b\binom{\kk}{\bl} \Phi^\shuffle(T)[x_{\kk^j+\bl^j}].
\end{align*}
For $j\le s \le a-1$ and $\bl \in I_{s}^{+}$ we have
\begin{align*}
b\binom{\kk}{\bl}=b\binom{\kk^{s}}{\bl^{s}}, \qquad
x_{\kk^{j}+\bl^{j}}=x_{1}^{s-j}x_{\kk^{s}+\bl^{s}}
\end{align*}
and $\kk^{s}+\bl^{s}$ is admissible.
Hence
\begin{align*}
R_{1}&=\sum_{s=0}^{a-1}\sum_{\bl \in I_{s}^{+}}b\binom{\kk^{s}}{\bl^{s}}
\sum_{j=0}^{s}\Lambda_{j}
\Phi^{\shuffle}(T)[x_{1}^{s-j}x_{\kk^{s}+\bl^{s}}]	 \\
&=\sum_{s=0}^{a-1}\sum_{\bl \in I_{s}^{+}}b\binom{\kk^{s}}{\bl^{s}}
\Phi^{*}(T)[x_{1}^{s}x_{\kk^{s}+\bl^{s}}]=
\sum_{s=0}^{a-1}\sum_{\bl \in I_{s}^{+}}b\binom{\kk}{\bl}
\Phi^{*}(T)[x_{\kk+\bl}].
\end{align*}
Similarly we see that
\begin{align*}
R_{2}=\sum_{\bl \in I_{a}}b\binom{\kk}{\bl}
\Phi^{*}(T)[x_{\kk+\bl}].
\end{align*}
Since the direct sum $I_{0}^{+}\sqcup \cdots \sqcup I_{a-1}^{+} \sqcup I_{a}$ is equal to
the set of $\bl \in \Z_{\ge 0}^{d}$ with $\wt(\bl)=l$, we find that
\begin{align*}
R_{1}+R_{2}=
\sum_{\substack{\bl\in \Z_{\ge0}^d \\ \wt(\bl)=l}} b\binom{\kk}{\bl} \Phi^{*}(T)[x_{\kk+\bl}]=
\sum_{\substack{\bl\in \Z_{\ge0}^d \\ \wt(\bl)=l}} b\binom{\kk}{\bl} \Psi^{*}(T)[y_{\kk+\bl}].
\end{align*}
This completes the proof.
\end{proof}

Define the $\R[T]$-linear map $\sigma:\R[T]\langle\langle x_0,x_1\rangle\rangle \rightarrow\R[T]\langle\langle x_0,x_1\rangle\rangle $ by
\[ \sigma(x_{a_1}\cdots x_{a_k})=(-1)^{k}x_{a_k}\cdots x_{a_1}.\]
This gives the antipode of the shuffle coalgebra $\R[T]\langle\langle x_0,x_1\rangle\rangle$.
Thus, for any algebra homomorphism $\varphi:\mathfrak{h}_\shuffle \rightarrow \R[T]$,
its non-commutative generating series $S=\sum_{w} \varphi(w)w$ is group-like
with respect to the shuffle coproduct and satisfies $\sigma(S)S=1$.
We let
\[ \Xi(T_1,T_2)=\sigma\big(\Phi^\ast(T_1)  \big )x_1 \Phi^\ast(T_2) .\]

\begin{proposition}\label{pror:generating_function}
For any index $\kk$ and $l\ge1$, we have
\begin{align*}
 \Xi(T_1,T_2)[x_0^lx_1x_{\kk}]&=\sum_{a=1}^{\dep(\kk)}(-1)^{ \wt(\kk_a)}
\sum_{\substack{\bl\in \Z_{\ge0}^a\\ \wt(\bl)=l}}
b\binom{\kk_a}{\bl}
\zeta^\ast\left(\overline{\kk_a+\bl_a}; T_1\right)
\zeta^\ast\left(\kk^a; T_2\right)
\end{align*}
\end{proposition}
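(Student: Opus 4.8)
The plan is to read off the coefficient of the word $W=x_0^{l}x_1x_{\kk}$ in the three‑fold product $\Xi(T_1,T_2)=\sigma(\Phi^\ast(T_1))\,x_1\,\Phi^\ast(T_2)$ directly. Writing $\kk=(k_1,\dots,k_d)$, so that $W=x_0^{l}x_1x_0^{k_1-1}x_1\cdots x_0^{k_d-1}x_1$ has exactly $d+1$ letters $x_1$, one has
\[
\Xi(T_1,T_2)[W]=\sum_{W=Ux_1V}\sigma(\Phi^\ast(T_1))[U]\cdot\Phi^\ast(T_2)[V],
\]
the sum being over the $d+1$ factorisations of $W$ at one of its letters $x_1$. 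If the distinguished $x_1$ is the one immediately after the initial block $x_0^{l}$, then $U=x_0^{l}$ and $V=x_{\kk}$; if it is the $x_1$ terminating the block $x_0^{k_a-1}$ for some $1\le a\le d$, then $U=x_0^{l}x_1x_{\kk_{a-1}}x_0^{k_a-1}$ and $V=x_{\kk^{a}}$ (with the convention $x_{\varnothing}=1$).

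First I would dispose of the term $U=x_0^{l}$, $V=x_{\kk}$. Since the left factor $\Lambda(x_1)$ does not affect the coefficient of an all-$x_0$ word, $\sigma(\Phi^\ast(T_1))[x_0^{l}]=(-1)^{l}\Phi^\ast(T_1)[x_0^{l}]=(-1)^{l}\Phi^\shuffle(T_1)[x_0^{l}]=(-1)^{l}Z^\shuffle({\rm reg}_0(x_0^{l}))$, and ${\rm reg}_0(x_0^{l})=0$ for $l\ge1$ — this is \eqref{eq:reg0} with $\kk$ the empty index, for which the sum over $\bl\in\Z_{\ge0}^{0}$ with $\wt(\bl)=l\ge1$ is empty. (This is the only place the hypothesis $l\ge1$ is used.) So only the terms $1\le a\le d$ contribute. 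For such $a$, the reversal of $x_{\kk_{a-1}}$ is $x_1x_0^{k_{a-1}-1}\cdots x_1x_0^{k_1-1}$, so the reversal of $U=x_0^{l}x_1x_{\kk_{a-1}}x_0^{k_a-1}$ equals $x_{\overline{\kk_a}}\,x_0^{l}$ and $|U|=l+\wt(\kk_a)$; hence
\[
\sigma(\Phi^\ast(T_1))[U]=(-1)^{l+\wt(\kk_a)}\,\Phi^\ast(T_1)\big[x_{\overline{\kk_a}}x_0^{l}\big].
\]
Applying Lemma~\ref{lem:shuffle_l_part} to the index $\overline{\kk_a}$ turns the right-hand side into $(-1)^{\wt(\kk_a)}\sum_{\bl\in\Z_{\ge0}^{a},\,\wt(\bl)=l}b\binom{\overline{\kk_a}}{\bl}\zeta^\ast(\overline{\kk_a}+\bl;T_1)$, the two factors $(-1)^{l}$ cancelling; and $\Phi^\ast(T_2)[x_{\kk^{a}}]=\zeta^\ast(\kk^{a};T_2)$ is the $l=0$ case of the same lemma.

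It then remains to bring the inner sum into the stated shape, which is the only slightly delicate step. Replacing $\bl$ by $\overline{\bl}$ and checking on components that $b\binom{\overline{\kk_a}}{\bl}=b\binom{\kk_a}{\overline{\bl}}$ and $\overline{\kk_a}+\bl=\overline{\kk_a+\overline{\bl}}$, the inner sum becomes $\sum_{\bl\in\Z_{\ge0}^{a},\,\wt(\bl)=l}b\binom{\kk_a}{\bl}\zeta^\ast(\overline{\kk_a+\bl};T_1)$; summing over $a=1,\dots,\dep(\kk)$ gives exactly the claimed identity. The main obstacle is therefore purely combinatorial bookkeeping: keeping track of the antipode sign $(-1)^{|U|}=(-1)^{l+\wt(\kk_a)}$, the sign $(-1)^{l}$ from Lemma~\ref{lem:shuffle_l_part}, and their product $(-1)^{\wt(\kk_a)}$, and verifying the reversal identity for the weights $b\binom{\cdot}{\cdot}$. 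I anticipate no conceptual difficulty beyond this.
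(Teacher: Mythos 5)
Your proposal is correct and follows essentially the same route as the paper: decompose the coefficient over the $d+1$ factorisations of $x_0^l x_1 x_{\kk}$ at a letter $x_1$, kill the $U=x_0^l$ term via ${\rm reg}_0(x_0^l)=0$ for $l\ge 1$, apply the antipode to get $(-1)^{l+\wt(\kk_a)}\Phi^\ast(T_1)[x_{\overline{\kk_a}}x_0^l]$, and finish with Lemma \ref{lem:shuffle_l_part} plus the reversal re-indexing $\bl\mapsto\overline{\bl}$. You merely make explicit two steps the paper leaves implicit (the vanishing of the $a=0$ term and the re-indexing), and both are carried out correctly.
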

\begin{proof}
Let $\kk=(k_1,\ldots,k_a)$.
One computes
\begin{align*}
 \Xi(T_1,T_2)[x_0^lx_1x_{\kk}]&=\sum_{a=0}^{\dep(\kk)}\sigma\big(\Phi^\ast(T_1)  \big )
 [x_0^lx_1 x_0^{k_1-1}\cdots x_1x_0^{k_a-1}]
 \Phi^\ast(T_2) [x_{\kk^a}]\\
 &=\sum_{a=1}^{\dep(\kk)}(-1)^{ l+\wt(\kk_a)}
 \Phi^\ast(T_1) [x_0^{k_a-1}x_1\cdots x_0^{k_1-1}x_1x_0^l] \Phi^\ast(T_2) [x_{\kk^a}].
\end{align*}
Thus, the desired formula follows from Lemma \ref{lem:shuffle_l_part}.
\end{proof}

\begin{proposition}{\cite[Theorem 9]{Hirose}}\label{prop:xi}
Let $\Phi_{\exp}(T)= \sigma\big(\Phi^\shuffle(0)\big) \exp(T x_1)\Phi^\shuffle(0)$.
Then we have
\[ \Xi(T_1,T_2)=\frac{1}{2\pi i} \left(\Phi_{\exp}(\pi i+T_2-T_1)-\Phi_{\exp}(-\pi i+T_2-T_1)\right).\]
In particular, it holds that
\[ \Xi(T+\frac{\pi i}{2},T-\frac{\pi i}{2})=\frac{1}{2\pi i}  \left(
\Phi_{\exp}(0)-\Phi_{\exp}(-2\pi i)
\right), \]
which is independent of $T$.
\end{proposition}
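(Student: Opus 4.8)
The plan is to reduce the identity to an algebraic manipulation of non-commutative power series in which the only analytic input is the Taylor expansion of $\log\Gamma(1+x)$ together with the Gamma reflection formula.

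First I would record the two structural facts that make everything collapse. (i) The shuffle-regularized generating series obeys the $T$-shift relation $\Phi^\shuffle(T) = e^{Tx_1}\Phi^\shuffle(0)$; this records the polynomial dependence of $\zeta^\shuffle(\kk;T)$ on $T$ and follows at once from the differential equation $\frac{d}{dT}\Phi^\shuffle(T) = x_1\Phi^\shuffle(T)$, which one checks coefficientwise from the definition of ${\rm reg}_0$ and $Z^\shuffle$ together with $\mathfrak{h}_\shuffle\cong\mathfrak{h}^1_\shuffle[x_0]$. (ii) The antipode $\sigma$ is an anti-automorphism of $\R[T]\langle\langle x_0,x_1\rangle\rangle$ for the concatenation product, and on the commutative subring $\R[T][[x_1]]$ it is simply the substitution $x_1\mapsto -x_1$; hence $\sigma(e^{Tx_1}) = e^{-Tx_1}$ and $\sigma(\Lambda(x_1)) = \Lambda(-x_1)$.

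Then I would compute. From $\Phi^\ast(T) = \Lambda(x_1)\Phi^\shuffle(T) = \Lambda(x_1)e^{Tx_1}\Phi^\shuffle(0)$ and (ii) one gets $\sigma(\Phi^\ast(T_1)) = \sigma(\Phi^\shuffle(0))\,e^{-T_1x_1}\,\Lambda(-x_1)$, so
\[
\Xi(T_1,T_2) = \sigma(\Phi^\shuffle(0))\,e^{-T_1x_1}\,\Lambda(-x_1)\,x_1\,\Lambda(x_1)\,e^{T_2x_1}\,\Phi^\shuffle(0).
\]
Everything sandwiched between $\sigma(\Phi^\shuffle(0))$ and $\Phi^\shuffle(0)$ is a power series in $x_1$ alone, hence commutes, and the middle block becomes $x_1\Lambda(x_1)\Lambda(-x_1)$. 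The crucial identity is $\Lambda(x)\Lambda(-x) = \frac{\sin\pi x}{\pi x}$ as formal power series, which I would deduce from $\Lambda(x) = e^{-\gamma x}/\Gamma(1+x)$ — a rephrasing of $\log\Gamma(1+x) = -\gamma x + \sum_{n\ge2}\frac{(-1)^n}{n}\zeta(n)x^n$ — and the reflection formula $\Gamma(1+x)\Gamma(1-x) = \pi x/\sin\pi x$ (note that $\gamma$ and the odd zeta values cancel). Hence $x_1\Lambda(x_1)\Lambda(-x_1) = \frac{\sin\pi x_1}{\pi} = \frac{1}{2\pi i}\left(e^{\pi i x_1} - e^{-\pi i x_1}\right)$, and absorbing $e^{\pm\pi i x_1}$ into the flanking exponentials gives
\[
\Xi(T_1,T_2) = \frac{1}{2\pi i}\,\sigma(\Phi^\shuffle(0))\left(e^{(\pi i + T_2 - T_1)x_1} - e^{(-\pi i + T_2 - T_1)x_1}\right)\Phi^\shuffle(0) = \frac{1}{2\pi i}\left(\Phi_{\exp}(\pi i + T_2 - T_1) - \Phi_{\exp}(-\pi i + T_2 - T_1)\right).
\]
The ``in particular'' clause is the specialization $T_1 = T + \frac{\pi i}{2}$, $T_2 = T - \frac{\pi i}{2}$: then $T_2 - T_1 = -\pi i$, so the right-hand side is $\frac{1}{2\pi i}\left(\Phi_{\exp}(0) - \Phi_{\exp}(-2\pi i)\right)$, visibly independent of $T$.

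The only real obstacle is fact (i): giving a clean self-contained proof of the $T$-shift relation $\Phi^\shuffle(T) = e^{Tx_1}\Phi^\shuffle(0)$ from the definitions of ${\rm reg}_0$ and $Z^\shuffle$, and keeping meticulous track of the order of concatenation factors whenever $\sigma$ is applied. After that the argument is purely formal, living in the commutative ring of power series in the single variable $x_1$, plus the Gamma reflection formula.
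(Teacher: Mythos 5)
Your proof is correct and follows essentially the same route as the paper's: write $\Phi^\ast(T)=\Lambda(x_1)e^{Tx_1}\Phi^\shuffle(0)$, apply the antipode $\sigma$ as an anti-automorphism acting by $x_1\mapsto -x_1$ on series in $x_1$, and collapse the middle block via $x_1\Lambda(x_1)\Lambda(-x_1)=\frac{\sin\pi x_1}{\pi}$. The only difference is that you spell out the Gamma-function derivation of that last identity and the differential-equation proof of $\Phi^\shuffle(T)=e^{Tx_1}\Phi^\shuffle(0)$, both of which the paper simply asserts.
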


\begin{proof}
Since $\Phi^\shuffle(T)=\exp(T x_1)\Phi^\shuffle(0)$, one computes
\begin{align*}
\Xi(T_1,T_2)&=\sigma(\Lambda(x_1)\Phi^\shuffle(T_1))x_1 \Lambda(x_1)\Phi^\shuffle(T_2)\\
&=\sigma(\Phi^\shuffle(T_1))\Lambda(-x_1)x_1 \Lambda(x_1)\Phi^\shuffle(T_2)\\
&=\sigma(\Phi^\shuffle(0))\exp(-T_1x_1)\Lambda(-x_1)x_1 \Lambda(x_1)\exp(T_2x_1)\Phi^\shuffle(0)\\
&=\sigma(\Phi^\shuffle(0))\frac{\sin \pi x_1}{\pi} \exp ((T_2-T_1)x_1)\Phi^\shuffle(0)\\
&=\sigma(\Phi^\shuffle(0))\big( \exp((\pi i+T_2-T_1)x_1)-\exp((-\pi i+T_2-T_1)x_1)\big)\Phi^\shuffle(0),
\end{align*}
from which the statement follows.
\end{proof}

\begin{proof}[Proof of Lemma \ref{lem:indep_T}]
Lemma \ref{lem:indep_T} follows from Propositions \ref{pror:generating_function} and \ref{prop:xi}.
\end{proof}

\subsection{Proof of \eqref{eq:hatS_from_q2}}
We turn to the proof of \eqref{eq:hatS_from_q2}.
Define $\beta_{l}(\kk; m) \in \mathbb{C} \,\, (l\ge 0, m\ge1)$ by
\begin{align*}
\left(-\frac{m}{2\pi i}(1-q_{m}(t))\right)^{-\wt(\kk)}\overline{H}_{m-1}(\kk;q_{m}(t))=
\sum_{l \ge 0}\beta_{l}(\kk; m)\,t^{l}.
\end{align*}
We show the limit of $\beta_{l}(\kk; m)$ as $m \to +\infty$ converges.
Then we have
\begin{align*}
\phi_{\widehat{\mathcal{S}}}(\overline{\zeta}_{\widehat{\mathcal{Q}}}(\kk))=
\sum_{l \ge 0}(\lim_{m \to \infty}\beta_{l}(\kk; m))t^{l}.
\end{align*}

Using the equality
\begin{align*}
\frac{1}{l!}\theta_{q}^{l}\frac{q^{m}}{(1-q^{m})^{k}}=m^{l}
\sum_{s=0}^{l}(-1)^{s+l}T_{s,l}(k)\binom{s+k-1}{s}\frac{q^{m}}{(1-q^{m})^{k+s}}
\end{align*}
for $l \ge 0$ and $k, m \ge 1$, which is shown by the same way as Lemma \ref{lem:theta-[m]},
we see that
\begin{align*}
&
\beta_{l}(\kk; m)=\sum_{\substack{j_{1}, j_{2} \ge 0\\ \bl=(l_1,\ldots,l_d)\in \Z_{\ge0}^d\\ j_{1}+j_{2}+\wt(\bl)=l  }}
\frac{l-j_{1}-j_{2}}{l}B_{l, j_{1}}
\frac{1}{m^{l}}\binom{l}{j_{2}}(2\pi i)^{l-j_{2}}(1+O(m^{-1}))
 \\
&\times
\sum_{\substack{\bs =(s_1,\ldots,s_d)\in \Z_{\ge0}^d \\ l_{a} \ge s_{a} \ge 0 \\ (1\le a \le d)}}
\left(-\frac{m}{2\pi i}\right)^{\wt(\bs)}(-1)^{\wt(\bs)+\wt(\bl)}\,
b\binom{\kk}{\bs}
\prod_{a=1}^{d}T_{s_{a}, l_{a}}(k_{a}) \, \overline{Z_{m-1}(\bl; \kk+\bs)},
\end{align*}
where $Z_{m-1}(\bl; \kk)$ is the complex number defined by \eqref{eq:def_Z}.
Now the desired equality is obtained from the above expression in a similar manner to
the proof of \eqref{eq:hatS_from_q}.

For the star version, we use
\[\frac{q^{m}}{[m]^{k_{1}}}
\frac{q^{m}}{[m]^{k_{2}}}=
\frac{q^{m}}{[m]^{k_{1}+k_{2}}}+(q-1)
\frac{q^{m}}{[m]^{k_{1}+k_{2}-1}}\]
to obtain the relations of the following form for any index $\kk=(k_{1}, \ldots , k_{d})$:
\begin{align*}
\overline{H}^{\star}_{m}(\kk;q)=\sum_{\bl}\overline{H}_{m}(\bl;q)+\sum_{\substack{\kk' \\ \wt(\kk')<\wt(\kk)}}
(q-1)^{\wt(\kk)-\wt(\kk')}c_{\kk,\kk'}\overline{H}_{m}(\kk';q),
\end{align*}
where the sum $\sum_{\bl}$ is over all indices $\bl$ of the form
$(k_{1} \square k_{2} \square \cdots \square k_{d})$ in which
$\square$ is either '$+$' (plus) or ',' (comma),
and $c_{\kk,\kk'}$ is an integer independent on $m$.
Then the result is a consequence of the limit formula
$\displaystyle\lim_{m \to \infty}\overline{H}_{m}(\kk;q_{m}(t))=\overline{\widehat{\xi}(\kk)}$
and the definition of $\widehat{\xi}^\star(\kk)$.
So we are done.

\section{Proofs of relations}\label{app:proofs_of_relations}

\subsection{Proof of reversal formulas}

\begin{lemma}\label{prop:[p]-expansions}
Suppose that $p$ is a prime, $p>m\ge 1$ and $k \ge 1$.
It holds that
\begin{align}
&
\frac{1}{[p-m]^{k}}=\left(-(q^{p})^{-1}\frac{q^{m}}{[m]}\right)^{\! k}\,
\sum_{l=0}^{n-1}\binom{k+l-1}{l}
\left( (q^{p})^{-1}[p]\frac{q^{m}}{[m]} \right)^{l}
\label{eq:[p-m]-inv}
\end{align}
in $Z_{p, n}$ for all $n\ge1$.
\end{lemma}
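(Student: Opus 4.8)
The plan is to reduce the statement to the single algebraic identity
\[
[p]=[p-m]+q^{p-m}[m],
\]
which follows at once from $1-q^{p}=(1-q^{p-m})+q^{p-m}(1-q^{m})$ upon dividing by $1-q$. Rewriting it as $[p-m]=-q^{p-m}[m]\bigl(1-(q^{p-m}[m])^{-1}[p]\bigr)$, I would first check that every quantity occurring in a denominator is a unit of $Z_{p,n}$: the element $q$ is invertible because $[p]-q\,(1+q+\cdots+q^{p-2})=1$, so $q^{-1}\equiv -(1+q+\cdots+q^{p-2})\bmod[p]^{n}$; hence $q^{p}$ and $q^{p-m}$ are units, and $[m]$ as well as $[p-m]$ are units by Lemma \ref{lem:[m]-inverse} since $1\le m\le p-1$ and $1\le p-m\le p-1$. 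Using $(q^{p-m}[m])^{-1}=(q^{p})^{-1}\dfrac{q^{m}}{[m]}$ and setting $u=(q^{p})^{-1}[p]\dfrac{q^{m}}{[m]}\in Z_{p,n}$, this gives
\[
\frac{1}{[p-m]^{k}}=\Bigl(-(q^{p})^{-1}\frac{q^{m}}{[m]}\Bigr)^{\!k}(1-u)^{-k}.
\]

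Next I would expand $(1-u)^{-k}$ as a finite sum. From the formal binomial identity $(1-U)^{-k}=\sum_{l\ge0}\binom{k+l-1}{l}U^{l}$ in $\mathbb{Z}[[U]]$, after multiplying by $(1-U)^{k}$ and discarding terms of degree $\ge n$,
\[
(1-U)^{k}\sum_{l=0}^{n-1}\binom{k+l-1}{l}U^{l}\equiv 1 \pmod{U^{n}}.
\]
Since $u$ is divisible by $[p]$ in $Z_{p,n}$, we have $u^{n}\in([p]^{n})=0$, so specializing $U=u$ yields $(1-u)^{k}\sum_{l=0}^{n-1}\binom{k+l-1}{l}u^{l}=1$ in $Z_{p,n}$, i.e. $(1-u)^{-k}=\sum_{l=0}^{n-1}\binom{k+l-1}{l}u^{l}$. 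Substituting $u=(q^{p})^{-1}[p]\dfrac{q^{m}}{[m]}$ back into the displayed formula for $1/[p-m]^{k}$ produces exactly the right-hand side of \eqref{eq:[p-m]-inv}.

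I do not anticipate a genuine obstacle: the argument is a short formal computation. The only points needing care are verifying that the $q$-powers and $q$-integers appearing in denominators are invertible in $Z_{p,n}$ (done above via Lemma \ref{lem:[m]-inverse} and the explicit B\'ezout relation for $q$), and justifying the truncation of the binomial series, which is immediate from the $[p]$-adic nilpotency $u^{n}=0$ in $Z_{p,n}$.
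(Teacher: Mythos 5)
Your proof is correct and follows essentially the same route as the paper's: both rest on the decomposition $[p]=[p-m]+q^{p-m}[m]$ and on truncating the binomial series $(1-u)^{-k}$ via the identity $(1-U)^{k}\sum_{l=0}^{n-1}\binom{k+l-1}{l}U^{l}\equiv 1 \pmod{U^{n}}$ together with the nilpotency of $[p]$ in $Z_{p,n}$. (One cosmetic slip: from $[p]-q(1+q+\cdots+q^{p-2})=1$ one gets $q^{-1}=-(1+q+\cdots+q^{p-2})(1-[p])^{-1}$, so your displayed congruence for $q^{-1}$ holds only modulo $[p]$, not modulo $[p]^{n}$; but only the invertibility of $q$ is needed, and that is unaffected.)
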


\begin{proof}
Note that there exists $f_{k,n}(x)\in \Z[x]$ such that
\begin{align*}
(1-x)^{k}\sum_{l=0}^{n-1}\binom{k+l-1}{l}x^{l} = 1+x^nf_{k,n}(x).
\end{align*}
Set $x=(q^{p})^{-1}[p]q^{m}/[m]$ and use
\begin{align*}
1-(q^{p})^{-1}[p]\frac{q^{m}}{[m]}=(q^{p})^{-1}\frac{q^{m}}{[m]}
\left( q^{p-m}[m]-[p] \right)=-(q^{p})^{-1}\frac{q^{m}}{[m]}[p-m].
\end{align*}
Then we obtain \eqref{eq:[p-m]-inv}.
\end{proof}

\begin{proof}[Proof of Theorem \ref{thm:q-reversal}]
It suffices to show the identity
\begin{align*}
\overline{H}_{p-1}^{\bullet}(\kk;q)=
(-(q^{p})^{-1})^{\wt(\kk)}q^{\dep(\kk)p}
\sum_{\substack{\bl \in \mathbb{Z}_{\ge 0}^{\dep(\kk)} \\ \wt(\bl)<n }}
((q^{p})^{-1}[p])^{\wt(\bl)}\,
b\binom{\kk}{\bl}
H_{p-1}^{\bullet}(\overline{\kk+\bl};q)
\end{align*}
in $Z_{p, n}$ for $\bullet \in \{\emptyset, \star\}$, any prime $p$ and $n \ge 1$.
It is obtained by changing the summation variable $m_{a}$ to $p-m_{a}$ in
$\overline{H}_{p-1}^{\bullet}(\kk;q)$ and using \eqref{eq:[p-m]-inv}.
\end{proof}

\begin{proof}[Proof of Corollary \ref{cor:reversal}]
Since $\phi_{\widehat{\mathcal{F}}}(q^{\pm \boldsymbol{p}}) =1$, applying Theorems \ref{thm:hatA_from_q} and \ref{thm:hatS_from_q}
to Theorem \ref{thm:q-reversal}, we get the desired formulas.
\end{proof}

\subsection{Proof of duality formulas}
\begin{proof}[Proof of Theorem \ref{thm:Hoffman's-duality}]
Set $\kk=(k_{1}, \ldots , k_{d})$ and $\kk^{\vee}=(k_{1}', \ldots , k_{s}')$.
We start from a $q$-analogue of the duality formula due to Bradley \cite{Bradley05_2} (see also \cite{Kawashima10}):
\begin{align*}
&
\sum_{n\ge m_{1} \ge \cdots \ge m_{d}>0}
(-1)^{m_{1}-1}q^{m_{1}(m_{1}-1)/2}\qbinom{n-1}{m_{1}-1}
\prod_{a=1}^{d}\frac{q^{(k_{a}-1)m_{a}}}{[m_{a}]^{k_{a}}} \\
&=\sum_{n=m_{1}\ge m_{2} \ge \cdots \ge m_{s}>0}
\frac{1}{[m_{1}]^{k_{1}'}}
\prod_{a=2}^{s}\frac{q^{m_{a}}}{[m_{a}]^{k_{a}'}},
\end{align*}
where $\qbinom{n}{m}$ is the $q$-binomial coefficient
\begin{align*}
\qbinom{n}{m}=\prod_{j=1}^{m}\frac{[n-(j-1)]}{[j]}.
\end{align*}
Multiply the both sides by $(-1)^{n}q^{(p-n)(p-n-1)/2}\qbinom{p-1}{n}$ and
take the sum over $1\le n \le p-1$ using the equalities
\begin{align*}
\qbinom{p-1}{n-1}\qbinom{n-1}{m_{1}-1}=\qbinom{p-1}{m_{1}-1}\qbinom{p-m_{1}}{n-m_{1}}
\end{align*}
and
\begin{align*}
\sum_{n=m_{1}}^{p-1}(-1)^{n}q^{(p-n)(p-n-1)/2}\qbinom{p-m_{1}}{n-m_{1}}=(-1)^{p-1}.
\end{align*}
Then we obtain
\begin{align*}
&
(-1)^{p-1}\sum_{p> m_{1}\ge	\cdots \ge m_{d}>0}
(-1)^{m_{1}-1}q^{m_{1}(m_{1}-1)/2}\qbinom{p-1}{m_{1}-1}
\prod_{a=1}^{d}\frac{q^{(k_{a}-1)m_{a}}}{[m_{a}]^{k_{a}}} \\
&=\sum_{p> m_{1}\ge	\cdots \ge m_{s}>0}
(-1)^{m_{1}}q^{(p-m_{1})(p-m_{1}-1)/2}\qbinom{p-1}{m_{1}-1}
\frac{1}{[m_{1}]^{k_{1}'}}
\prod_{a=2}^{s}\frac{q^{m_{a}}}{[m_{a}]^{k_{a}'}}.
\end{align*}
It is an identity of rational functions in $q$.
Using Proposition \ref{prop:[p]-expansions},
we see that the following equalities hold in $Z_{p, n}$ for any $n \ge 1$:
\begin{align*}
&
(-1)^{m-1}q^{m(m-1)/2}\qbinom{p-1}{m-1}=(-1)^{p-1}q^{p(p-1)/2}
\prod_{a=m}^{p-1}\frac{[a]}{[a]-[p]} \\
&=(-1)^{p-1}q^{p(p-1)/2}
\left(
1+\sum_{l=1}^{n-1}[p]^{l}\sum_{p>m_{1}\ge \cdots \ge m_{l} \ge m}\prod_{j=1}^{l}\frac{1}{[m_{j}]}
\right)
\end{align*}
and
\begin{align*}
&
(-1)^{m}q^{(p-m)(p-m-1)/2}\qbinom{p-1}{m-1}=
(-1)^{m}q^{(p-m)(p-m-1)/2}\prod_{a=m}^{p-1}\frac{[a]}{[p-a]} \\
&=(-1)^{p}(q^{p})^{-1}q^{m}
\left(
1+\sum_{l=1}^{n-1}((q^{p})^{-1}[p])^{l}
\sum_{p>m_{1}\ge \cdots \ge m_{l} \ge m}\prod_{j=1}^{l}\frac{q^{m_{j}}}{[m_{j}]}
\right).
\end{align*}
Hence for any prime $p$ we obtain
\begin{align*}
(-1)^{p-1}q^{p(p+1)/2}
\sum_{l=0}^{n-1}[p]^{l}
H_{p-1}^{\star}(\{1\}^{l}, \kk;q)=-\sum_{l = 0}^{n-1}
(q^{-p}[p])^{l}\, \overline{H}_{p-1}^{\star}(\{1\}^{l}, \kk^{\vee};q) \mod [p]^n,
\end{align*}
which completes the proof.
\end{proof}

\begin{proof}[Proof of Corollary \ref{cor:duality_F}]
The relation \eqref{eq:Hoffman-dual-1} for $\mathcal{F}=\mathcal{A}$ is obtained by applying $\phi_{\widehat{\mathcal{A}}}$
to Theorem \ref{thm:Hoffman's-duality}.
Since the equality \eqref{eq:Hoffman-dual-2} implies
\eqref{eq:Hoffman-dual-1} for $\mathcal{F}=\mathcal{S}$,
it suffices to show \eqref{eq:Hoffman-dual-2}.
To obtain it from Theorem \ref{thm:Hoffman's-duality},
we need to show that $(-1)^{m-1}q_{m}(t)^{m(m+1)/2} \to \exp{(\pi i t)}$ as $m \to \infty$.

Set $(-1)^{m-1}q_{m}(t)^{m(m+1)/2}=1+\sum_{l\ge 1}\alpha_{l}(m)t^{l}$.
{}From Proposition \ref{prop:expansion} and Lemma \ref{lem:SNL},
we see that
\begin{align*}
\alpha_{l}(m)&=\frac{1}{l}
\sum_{\substack{j_{1}+j_{2}+j_{3}=l-1 \\ j_{1}, j_{2}, j_{3} \ge 0}}
B_{l, j_{1}}\frac{(2\pi i)^{l-j_{2}}}{m^{l}}\binom{l}{j_{2}}(1+O(m^{-1}))
\frac{1}{j_{3}! \, m^{j_{3}+1}}\left(\frac{m(m+1)}{2}\right)^{j_{3}+1} \\
&=\frac{1}{l}
\sum_{\substack{j_{1}+j_{2}+j_{3}=l-1 \\ j_{1}, j_{2}, j_{3} \ge 0}}
B_{l, j_{1}}
\frac{2^{j_{1}}(\pi i)^{l-j_{2}}}{j_{3}!}
\frac{(m+1)^{j_{3}+1}}{m^{l}}
\binom{l}{j_{2}}(1+O(m^{-1})) \\
&=\frac{(\pi i)^{l}}{l!}(1+O(m^{-1})).
\end{align*}
Thus we find that
$\displaystyle\lim_{m \to \infty}(-1)^{m}q_{m}(t)^{m(m+1)/2}=\exp{(\pi i t)}$.
\end{proof}

\subsection{Proof of cyclic sum formulas}

\begin{proof}[Proof of Theorem \ref{thm:cyclic-sum}]
Our proof is adapted from \cite[\S5]{Bradley05},
\cite[\S2]{OhnoOkuda07} and \cite[\S5]{Kawasaki}.
We first show \eqref{eq:cyc_sum_q}.
Set
\begin{align*}
&
S_{p}(k_{1}, \ldots , k_{d}; l)=\sum_{p>m_{1}>\cdots >m_{d+1}\ge 1}
\frac{q^{(k_{1}-l-1)m_{1}}}{[m_{1}]^{k_{1}-l}}
\prod_{j=2}^{d}\frac{q^{(k_{j}-1)m_j}}{[m_{j}]^{k_{j}}}\,
\frac{q^{(l-1)m_{d+1}}}{[m_{d+1}]^{l}}
\frac{q^{m_{1}}}{[m_{1}-m_{d+1}]}, \\
&
T_{p}(k_{1}, \ldots , k_{d})=\sum_{p>m_{1}>\cdots >m_{d}>m_{d+1}\ge 0}
\prod_{j=1}^{d}\frac{q^{(k_{j}-1)m_j}}{[m_{j}]^{k_{j}}}\,
\frac{q^{m_{1}-m_{d+1}}}{[m_{1}-m_{d+1}]}.
\end{align*}
For $0 \le s \le k_{1}-2$, using
\begin{align}
\frac{[m_{d+1}]}{[m_{1}]}\frac{q^{m_{1}-m_{d+1}}}{[m_{1}-m_{d+1}]}=
\frac{1}{[m_{1}-m_{d+1}]}-\frac{1}{[m_{1}]},
\label{eq:decomp1}
\end{align}
we obtain
\begin{align*}
S_{p}(k_{1}, \ldots , k_{d}; s)=S_{p}(k_{1}, \ldots , k_{d}; s+1)-
H_{p-1}(k_{1}-s, k_{2}, \ldots , k_{d}, s+1;q).
\end{align*}
Hence
\begin{align*}
S_{p}(k_{1}, \ldots , k_{d}; 0)=S_{p}(k_{1}, \ldots , k_{d}; k_{1}-1)-
\sum_{s=0}^{k_{1}-2}
H_{p-1}(k_{1}-s, k_{2}, \ldots , k_{d}, s+1;q).
\end{align*}
Note that this is true also in the case of $k_{1}=1$.
On the other hand, decomposing $m_{d+1}\ge0$ in the summation of $T_p(k_1,\ldots,k_d)$ into either $m_{d+1}>0$ and $m_{d+1}=0$, we get
\begin{align*}
S_{p}(k_{1}, \ldots , k_{d}; 0)=
T_{p}(k_{1}, \ldots , k_{d})-H_{p-1}(k_{1}+1, k_{2}, \ldots , k_{d}).
\end{align*}
Using
\begin{align}
\frac{[m_{d+1}]}{[m_{1}]}\frac{q^{m_{1}-m_{d+1}}}{[m_{1}-m_{d+1}]}=
\frac{q^{m_{1}-m_{d+1}}}{[m_{1}-m_{d+1}]}-\frac{q^{m_{1}}}{[m_{1}]},
\label{eq:decomp2}
\end{align}
we see that
\begin{align*}
\sum_{m_{1}=m_{2}+1}^{p-1}
\frac{[m_{d+1}]}{[m_{1}]}\frac{q^{m_{1}-m_{d+1}}}{[m_{1}-m_{d+1}]}=
\sum_{n=0}^{m_{d+1}-1}\frac{q^{m_{2}-n}}{[m_{2}-n]}-
\sum_{n=1}^{m_{d+1}}\frac{q^{p-n}}{[p-n]}.
\end{align*}
Hence
\begin{align*}
&S_{p}(k_{1}, \ldots , k_{d}; k_{1}-1)\\
&=\sum_{p>m_1>\cdots>m_{d+1}>0} \prod_{j=2}^{d} \frac{q^{(k_j-1)m_j}}{[m_j]^{k_j}} \frac{q^{(k_1-1)m_{d+1}}}{[m_{d+1}]^{k_{1}}} \frac{[m_{d+1}]}{[m_{1}]}\frac{q^{m_{1}-m_{d+1}}}{[m_{1}-m_{d+1}]}\\
&=\sum_{p>m_2>\cdots>m_{d+1}>0} \prod_{j=2}^{d} \frac{q^{(k_j-1)m_j}}{[m_j]^{k_j}} \frac{q^{(k_1-1)m_{d+1}}}{[m_{d+1}]^{k_{1}}} \left(\sum_{n=0}^{m_{d+1}-1}
\frac{q^{m_{2}-n}}{[m_{2}-n]}-\sum_{n=1}^{m_{d+1}}\frac{q^{p-n}}{[p-n]}\right)\\
&=T_p(k_2,\ldots,k_d,k_1)-V_{p}(k_{2}, \ldots , k_{d}, k_{1}),
\end{align*}
where we put
\begin{align*}
V_{p}(k_{2}, \ldots , k_{d}, k_{1})=\sum_{p>m_{2}>\cdots >m_{d+1} \ge m_{1} \ge 1}
\prod_{j=2}^{d}\frac{q^{(k_{j}-1)m_{j}}}{[m_{j}]^{k_{j}}}\,
\frac{q^{(k_{1}-1)m_{r+1}}}{[m_{r+1}]^{k_{1}}}
\frac{q^{p-m_{1}}}{[p-m_{1}]}.
\end{align*}
As a consequence we get
\begin{align*}
&
T_{p}(k_{1}, \ldots , k_{d})-T_{p}(k_{2}, \ldots , k_{d}, k_{1}) \\
&=H_{p-1}(k_{1}+1, k_{2}, \ldots , k_{d};q)-\sum_{s=0}^{k_{1}-2}
H_{p-1}(k_{1}-s, k_{2}, \ldots , k_{d},s+1;q) \\
&-V_{p}(k_{2}, \ldots , k_{d}, k_{1}).
\end{align*}
Since $\sum_{\kk\in\alpha} \big(T_{p}(\kk)-T_{p}(\kk^1, \kk_{1}) \big)=0$, it holds that
\begin{align*}
\sum_{\kk \in \alpha}\left\{
\sum_{s=0}^{\kk_{1}-2}
H_{p-1}(\kk_{1}-s, \kk^1, s+1;q)-
H_{p-1}(\kk_{1}+1, \kk^1;q)
\right\}=-\sum_{\kk \in \alpha}V_{p}(\kk^1, \kk_{1}).
\end{align*}
Using \eqref{eq:partial_franction_q} and \eqref{eq:[p-m]-inv},
we find that
\begin{align*}
&
V_{p}(\kk^1,\kk_{1}) =-\sum_{l=0}^{n-1}((q^{p})^{-1}[p])^{l}\\
&\times
\bigg\{
H_{p-1}(\kk^1, \kk_{1},l+1;q)+H_{p-1}(\kk^1, \kk_{1}+ l+1;q)+ (1-q)H_{p-1}(\kk^1, \kk_{1}+l;q)
\bigg\}
\end{align*}
in $Z_{p, n}$.
This completes the proof of \eqref{eq:cyc_sum_q}.

For the proof of \eqref{eq:cyc_sum_q_star}, set
\begin{align*}
&
B_{p}(k_{1} \ldots , k_{d}; l)=
\sum_{\substack{p>m_{1} \ge \cdots \ge m_{d+1}>0 \\ m_{1}\not=m_{d+1}}}
\frac{q^{(k_{1}-l-1)m_{1}}}{[m_{1}]^{k_{1}-l}}
\prod_{j=2}^{d}\frac{q^{(k_{j}-1)m_j}}{[m_{j}]^{k_{j}}}\,
\frac{q^{(l-1)m_{d+1}}}{[m_{d+1}]^{l}}
\frac{q^{m_{1}}}{[m_{1}-m_{d+1}]}, \\
&
C_{p}(k_{1} \ldots , k_{d})=
\sum_{\substack{p>m_{1} \ge \cdots \ge m_{d+1}>0 \\ m_{1}\not=m_{d+1}}}
\prod_{j=1}^{d}\frac{q^{(k_{j}-1)m_j}}{[m_{j}]^{k_{j}}}\,
\frac{q^{m_{1}-m_{d+1}}}{[m_{1}-m_{d+1}]}.
\end{align*}
It follows from \eqref{eq:decomp1} that
\begin{align*}
&
B_{p}(k_{1} \ldots , k_{d}; s) \\
&=B_{p}(k_{1} \ldots , k_{d}; s+1)-H_{p-1}^{\star}(k_{1}-s, k_{2}, \ldots , k_{d},s+1;q)+
\sum_{p>m>0}\frac{q^{(k-d)m}}{[m]^{k+1}}
\end{align*}
for $0 \le s \le k_{1}-2$, where $k=\sum_{j=1}^{d}k_{j}$.
Hence
\begin{align*}
B_{p}(k_{1} \ldots , k_{d}; 0)
&=B_{p}(k_{1} \ldots , k_{d}; k_{1}-1)-\sum_{l=0}^{k_{1}-2}
H_{p-1}^{\star}(k_{1}-l, k_{2}, \ldots , k_{d}, l+1;q) \\
&+(k_{1}-1)\sum_{p>m>0}\frac{q^{(k-d)m}}{[m]^{k+1}}.
\end{align*}

We have
\begin{align*}
B_{p}(k_{1} \ldots , k_{d}; 0)=C_{p}(k_{1}, \ldots , k_{d}).
\end{align*}
On the other hand, by \eqref{eq:decomp2} we see that
\begin{align*}
&
\sum_{m_{1}=m_{2}}^{p-1}
\frac{[m_{d+1}]}{[m_{1}]}\frac{q^{m_{1}-m_{d+1}}}{[m_{1}-m_{d+1}]}=
\sum_{n=1}^{m_{d+1}}\frac{q^{m_{2}-n}}{[m_{2}-n]}-
\sum_{n=1}^{m_{d+1}}\frac{q^{p-n}}{[p-n]}, \\
&
\sum_{m_{1}=m_{2}+1}^{p-1}
\frac{[m_{2}]}{[m_{1}]}\frac{q^{m_{1}-m_{2}}}{[m_{1}-m_{2}]}=
\sum_{n=0}^{m_{2}-1}\frac{q^{m_{2}-n}}{[m_{2}-n]}-
\sum_{n=1}^{m_{2}}\frac{q^{p-n}}{[p-n]}.
\end{align*}
They imply that
\begin{align*}
&B_{p}(k_{1}, \ldots , k_{d}; k_{1}-1)\\
&=\sum_{\substack{p>m_1\ge \cdots\ge m_{d+1}>0\\m_1\neq m_{d+1}}} \prod_{j=2}^{d} \frac{q^{(k_j-1)m_j}}{[m_j]^{k_j}} \frac{q^{(k_1-1)m_{d+1}}}{[m_{d+1}]^{k_{1}}} \frac{[m_{d+1}]}{[m_{1}]}\frac{q^{m_{1}-m_{d+1}}}{[m_{1}-m_{d+1}]}\\
&=\sum_{\substack{p>m_2\ge \cdots\ge m_{d+1}>0\\m_2\neq m_{d+1}}} \prod_{j=2}^{d} \frac{q^{(k_j-1)m_j}}{[m_j]^{k_j}} \frac{q^{(k_1-1)m_{d+1}}}{[m_{d+1}]^{k_{1}}} \left(
\sum_{n=1}^{m_{d+1}}\frac{q^{m_{2}-n}}{[m_{2}-n]}-
\sum_{n=1}^{m_{d+1}}\frac{q^{p-n}}{[p-n]}\right)\\
&+ \sum_{p-1>m_2=\cdots= m_{d+1}>0} \prod_{j=2}^{d} \frac{q^{(k_j-1)m_j}}{[m_j]^{k_j}} \frac{q^{(k_1-1)m_{d+1}}}{[m_{d+1}]^{k_{1}}} \left(
\sum_{n=0}^{m_{2}-1}\frac{q^{m_{2}-n}}{[m_{2}-n]}-
\sum_{n=1}^{m_{2}}\frac{q^{p-n}}{[p-n]}\right) \\
&=C_{p}(k_{2}, \ldots , k_{d}, k_{1})+\sum_{p>m>0}\frac{q^{(k+1-d)m}}{[m]^{k+1}}-
W_{p}(k_{2}, \ldots , k_{d}, k_{1}),
\end{align*}
where
\begin{align*}
W_{p}(k_{2}, \ldots , k_{d}, k_{1})=
\sum_{p>m_{2} \ge \cdots \ge m_{d+1} \ge m_{1}>0}
\prod_{j=2}^{d}\frac{q^{(k_{j}-1)m_{j}}}{[m_{j}]^{k_{j}}}\,
\frac{q^{(k_{1}-1)m_{d+1}}}{[m_{d+1}]^{k_{1}}}
\frac{q^{p-m_{1}}}{[p-m_{1}]}.
\end{align*}
Thus we get
\begin{align*}
&
C_{p}(k_{1}, \ldots , k_{d})-C_{p}(k_{2}, \ldots , k_{d}, k_{1}) \\
&=-\sum_{s=0}^{k_{1}-2}H_{p-1}^{\star}(k_{1}-s, k_{2}, \ldots , k_{d}, s+1;q)-
W_{p}(k_{2}, \ldots , k_{d}, k_{1})\\
&+(q-1)\sum_{p>m>0}\frac{q^{(k-d)m}}{[m]^{k}}+k_1 \sum_{p>m>0}\frac{q^{(k-d)m}}{[m]^{k+1}}
\end{align*}
Therefore
\begin{align*}
&\sum_{\kk \in \alpha}
\sum_{s=0}^{\kk_{1}-2}H_{p-1}^{\star}(\kk_{1}-s, \kk^1, s+1;q)\\
&=-\sum_{\kk \in \alpha}
W_{p}(\kk^1, \kk_{1})+|\alpha|(q-1)\sum_{p>m>0}\frac{q^{(k-d)m}}{[m]^{k}}+\frac{k}{d}|\alpha| \sum_{p>m>0}\frac{q^{(k-d)m}}{[m]^{k+1}}.
\end{align*}
One computes
\begin{align*}
\frac{q^{(k-d)m}}{[m]^k}=\frac{q^{(k-d)m}}{[m]^k}((1-q^{m})+q^{m})^{d-1}=
\sum_{l=0}^{d-1}\binom{d-1}{l}(1-q)^l \frac{q^{(k-l-1)m}}{[m]^{k-l}}.
\end{align*}
Likewise we see that
\begin{align*}
\frac{q^{(k-d)m}}{[m]^{k+1}}=\sum_{l=0}^{d-1}\binom{d}{l}(1-q)^l \frac{q^{(k-l)m}}{[m]^{k-l+1}}.
\end{align*}
Now the desired equality is obtained by applying the expansion \eqref{eq:[p-m]-inv} to
$W_{p}(\kk^1, \kk_{1})$.
\end{proof}

\begin{proof}[Proof of Corollary \ref{cor:cyclic_sum}]
Since $\phi_{\widehat{\mathcal{F}}}(q^{\pm \boldsymbol{p}}) =1$, applying Theorems \ref{thm:hatA_from_q} and \ref{thm:hatS_from_q}
to Theorem \ref{thm:cyclic-sum}, we get the desired formulas.
\end{proof}



\begin{thebibliography}{99}

\bibitem[AHY]{AHY} K.~Akagi, M.~Hirose, S.~Yasuda, {\itshape Integrality of $p$-adic multiple zeta values and application to finite multiple zeta values}, preprint.


\bibitem[And99]{Andrews99} G.E.~Andrews, {\itshape $q$-analogs of the binomial coefficient congruences of Babage, Wolstenholme and Glaisher}, Discrete Math.~{\bf 204} (1999), 15--25.

\bibitem[AIK]{ArakawaIbukiyamaKaneko} T.~Arakawa, T.~Ibukiyama, M.~Kaneko, {\itshape Bernoulli Numbers and Zeta Functions}, Springer, Tokyo, 2014.

\bibitem[BTT18]{BTT18} H.~Bachmann, Y.~Takeyama, K.~Tasaka, {\itshape Cyclotomic analogues of finite multiple zeta values}, Compositio Math. {\bf 154}(12) (2018), 2701--2721.



\bibitem[Bra05-1]{Bradley05} D.M.~Bradley, {\itshape Multiple $q$-zeta values}, J.~Algebra {\bf 283} (2005), no. 2, 752--798.

\bibitem[Bra05-2]{Bradley05_2} D.M.~Bradley, {\itshape Duality for finite multiple harmonic $q$-series}, Discrete Math. {\bf 300} (2005), no. 1-3, 44--56.

\bibitem[Bro12]{Brown12} F.~Brown, {\itshape Mixed Tate motives over $\Z$}, Ann.~of Math.~{\bf 175} (2012), no.~2, 949--976.







\bibitem[Eul76]{Euler76} L.~Euler, {\itshape Meditationes circa singulare serierum genus}, Novi Comm. Acad. Sci. Petropol. {\bf 20} (1776), 140--186, reprinted in Opera Omnia ser. I, vol. 15, B. G. Teubner, Berlin (1927) 217--267.


\bibitem[Fur07]{Furusho07} H.~Furusho, {\itshape $p$-adic multiple zeta values. II. Tannakian interpretations}, Amer.~J.~Math., {\bf 129}, no. 4, (2007), 1105--1144.


\bibitem[Gon01]{Goncharov01} A.~Goncharov, \textit{Multiple polylogarithms and mixed Tate motives}, arXiv:math/0103059.

\bibitem[Gra97]{Granville97} A.~Granville, {\itshape A decomposition of Riemann's zeta-function}, in London Math. Soc. Lecture Note Ser. {\bf 247}, Cambridge, (1997), 95--101.


\bibitem[Hir20]{Hirose} M.~Hirose, {\itshape Double shuffle relations for refined symmetric multiple zeta values}, Doc. Math. {\bf 25} (2020), 365--380

\bibitem[HMO]{HiroseMuraharaOno} M.~Hirose, H.~Murahara, M.~Ono, {\itshape On variants of symmetric multiple zeta-star values and the cyclic sum formula}, to appear in Ramanujan J., arXiv:2001.03832.

\bibitem[HHT17]{HPT17} Kh.~Hessami Pilehrood, T.~Hessami Pilehrood, R.~Tauraso, {\itshape Some $q$-congruences for homogeneous and quasi-homogeneous multiple $q$-harmonic sums}, Ramanujan J.~{\bf 43} (2017), 113--139.


\bibitem[HO03]{HoffmanOhno03} M.~Hoffman, Y.~Ohno, {\itshape Relations of multiple zeta values and their algebraic expression}, J.~Algebra {\bf 262} (2003), no. 2, 332--347.

\bibitem[Hof15]{Hoffman15} M.E.~Hoffman, {\itshape Quasi-symmetric functions and mod $p$ multiple harmonic sums}, Kyushu J. Math. {\bf 69} (2015), 345--366.


\bibitem[IKZ06]{IharaKanekoZagier06} K.~Ihara, M.~Kaneko and  D.~Zagier,
{\itshape Derivation and double shuffle relations for multiple zeta values}, Compositio Math. {\bf 142} (2006), 307--338.





\bibitem[Jar1]{Jarossay18} D.~Jarossay, {\itshape Pro-unipotent harmonic actions and a computation of $p$-adic cyclotomic multiple zeta values}, preprint (2018), arXiv:1501.04893v5


\bibitem[Jar2]{Jarossay19} D.~Jarossay, {\itshape Adjoint cyclotomic multiple zeta values and cyclotomic multiple harmonic values}, preprint (2019), arXiv:1412.5099v5.


\bibitem[KZ]{KanekoZagier} M.~Kaneko and D.~Zagier, {\itshape Finite multiple zeta values}, in preparation.

\bibitem[Kan19]{Kaneko19} M.~Kaneko, {\itshape  An introduction to classical and finite multiple zeta values}, Publications Math\'{e}matiques de Besan\c{c}on, no.1 (2019), 103--129.

\bibitem[Kaw19]{Kawasaki} N.~Kawasaki, {\itshape Hyperlogarithms, Bernoulli polynomials, and related multiple zeta values}, Dissertation,
Tohoku University (2019).

\bibitem[KO20]{KawasakiOyama} N.~Kawasaki, K.~Oyama, {\itshape Cyclic sums of finite multiple zeta values}, Acta Arith. {\bf 195} (2020), 281--288.

\bibitem[Kaw10]{Kawashima10} G.~Kawashima, {\itshape A generalization of the duality for finite multiple harmonic q-series}, Ramanujan J. {\bf 21} (2010), 335--347.

\bibitem[Kon09]{Kontsevich09} M.~Kontsevich, {\itshape Holonomic D-modules and positive characteristic}, Jpn. J. Math. {\bf 4} (2009), no. 1, 1--25.


\bibitem[MOS20]{MuraharaOnozukaSeki20} H.~Murahara, T.~Onozuka, S.~Seki, {\itshape Bowman-Bradley type theorem for finite multiple zeta values in $\mathcal{A}_2$}, Osaka J., Math., {\bf 57} (2020), no.~3, 647--653.


\bibitem[OO07]{OhnoOkuda07} Y.~Ohno and J.~Okuda, {\itshape On the sum formula for the $q$-analogue of non-strict multiple zeta values}, Proc. Amer. Math. Soc. {\bf 135} (2007), no. 10, 3029--3037.


\bibitem[OSY21]{OnoSekiYamamoto} M.~Ono, S.~Seki, S.~Yamamoto, {\itshape Truncated $t$-adic symmetric multiple zeta values and double shuffle relations}, Research in Number Theory {\bf 7}, Article number 15 (2021).

\bibitem[OSS]{OnoSakuradaSeki} M.~Ono, K.~Sakurada, S.~Seki, {\itshape A note on $\mathcal{F}_n$-multiple zeta values}, to appear in Commentarii mathematici Universitatis Sancti Pauli.


\bibitem[PARI]{PARI} The PARI Group, {\itshape PARI/GP version 2.11.0}, Univ. Bordeaux, 2018, http://pari. math.u- bordeaux.fr/.

\bibitem[Rac02]{Racinet02} G.~Racinet, {\itshape Doubles m\'{e}langes des polylogarithmes multiples aux racines de l\'{u}nit\'{e}}, Publ. Math. IHES 95 (2002), 185--231.


\bibitem[Ros13]{RosenPHD} J.~Rosen, {\itshape The arithmetic of multiple harmonic sums}, Thesis (2013).

\bibitem[Ros15]{Rosen15} J.~Rosen, {\itshape Asymptotic relations for truncated multiple zeta values}, J. Lond. Math. Soc. (2) {\bf 91} (2015), 554--572.

\bibitem[Ros19]{Rosen19} J.~Rosen, {\itshape The completed finite period map and Galois theory of supercongruences}, Int. Math. Res. Not. {\bf 2019}, no. 23, 7379--7405.

\bibitem[Ros]{Rosen} J.~Rosen, {\itshape Sequential periods of the crystalline Frobenius}, preprint (2018), arXiv:1805.01885.




\bibitem[Sek19]{Seki19} S.~Seki, {\itshape The $\pp$-adic duality for the finite star-multiple polylogarithms}, Tohoku Math. J. {\bf 71} (2019), 111--122.


\bibitem[SP07]{ShiPan07} L.-L.~Shi, H.~Pan,
{\itshape A $q$-analogue of Wolstenholme's harmonic series congruence}, Amer. Math. Monthly,
{\bf 114}(6) (2007), 529--531.





\bibitem[Tas21]{Tasaka21} K.~Tasaka, {\itshape Finite and symmetric colored multiple zeta values and multiple harmonic q-series at roots of unity}, Selecta Math., 27, Article number:21 (2021).


\bibitem[Ter02]{Terasoma02} T.~Terasoma, \textit{Mixed Tate motives and multiple zeta values}, Invent. Math. 149 (2002), no. 2, 339--369.




\bibitem[Wol62]{Wolstenholme62} J.~Wolstenholme, {\itshape On certain properties of prime numbers}, Quart. J. Math. Oxford
Ser. {\bf 5} (1862), 35--39.


\bibitem[Yas16]{Yasuda16} S.~Yasuda, {\itshape Finite real multiple zeta values generate the whole space Z}, Int. J. Number Theory {\bf 12} (2016), no. 3, 787--812.

\bibitem[Yas19]{Yasuda19} S.~Yasuda, {\itshape From $p$-adic multiple zeta values to finite multiple zeta values}, In: K.~Sakugawa, K.~Tasaka, Y.~Mishiba (eds.) Proceedings of 26th summer school on Number Theory "Multiple Zeta Value" (2018), 189--202.

\bibitem[Zag94]{Zagier94} D.~Zagier, {\itshape Values of zeta functions and their applications}, First European Congress of Mathematics, Vol. II (Paris, 1992), Progr.\ Math., {\bf 120}, Birkh\"{a}user, Basel (1994), 497--512.

\bibitem[Zha08]{Zhao08} J.~Zhao, {\itshape Wolstenholme type theorem for multiple harmonic sums}, Int. J. Number Theory, {\bf 4}(1) (2008), 73--106.

\bibitem[Zha13]{Zhao13} J.~Zhao, {\itshape On $q$-analog of Wolstenholme type congruences for multiple harmonic sums}, Integers {\bf 13} (2013), A23.

\bibitem[Zha16]{Zhao16} J.~Zhao, {\itshape Multiple zeta functions, multiple polylogarithms and their special values}, Series on Number Theory and Its Applications, World Scientific, Vol. {\bf 12} (2016).





\end{thebibliography}
\end{document}